\newcommand\cyr{%
\renewcommand\rmdefault{wncyr}%
\renewcommand\sfdefault{wncyss}%
\renewcommand\encodingdefault{OT2}%
\normalfont
\selectfont}
\DeclareTextFontCommand{\textcyr}{\cyr} 
\DeclareFontFamily{OT1}{rsfs}{}
\DeclareFontShape{OT1}{rsfs}{n}{it}{<-> rsfs10}{}
\DeclareMathAlphabet{\mathscr}{OT1}{rsfs}{n}{it}
\numberwithin{equation}{section}
\newtheorem{theorem}{Theorem}[section]
\newtheorem{lemma}[theorem]{Lemma}
\newtheorem{proposition}[theorem]{Proposition}
\newtheorem{corollary}[theorem]{Corollary}
\newtheorem{question}{Question}
\newtheorem{Problem}{Problem}
\newtheorem{maintheorema}{Main Theorem A}
\newtheorem{maintheoremb}{Main Theorem B}
\newtheorem{maintheoremc}{Main Theorem C}
\theoremstyle{definition}
\newtheorem{definition}[theorem]{Definition}
\newtheorem{remark}[theorem]{Remark}
\newtheorem{example}[theorem]{Example}
\theoremstyle{remark}
\newcommand{\Aut}{\operatorname{Aut}}
\newcommand{\Spec}{\operatorname{Spec}}
\newcommand{\Ht}{\operatorname{ht}}
\newcommand{\Hom}{\operatorname{Hom}}
\newcommand{\no}{\operatorname{no}}
\newcommand{\Frac}{\operatorname{Frac}}
\newcommand{\red}{\operatorname{red}}
\newcommand{\fm}{\frak{m}}
\newcommand{\fp}{\frak{p}}
\newcommand{\fa}{\frak{a}}
\newcommand{\Frob}{\operatorname{Frob}}
\newcommand{\cl}{\operatorname{cl}}
\begin{document}
\title[An embedding problem of Noetherian rings into the Witt vectors]
{An embedding problem of Noetherian rings into the Witt vectors}

\author[K.Shimomoto]{Kazuma Shimomoto}
\address{Department of Mathematics, College of Humanities and Sciences,
Nihon University, Setagaya-ku, Tokyo, 156-8550, Japan}
\email{shimomotokazuma@gmail.com}
\thanks{2000 {\em Mathematics Subject Classification\/}:13A35, 13B22, 13B35, 13B40, 13K05}

\keywords{Complete local ring, \'etale extension, Frobenius map, Witt vectors.}


\begin{abstract} 
The aim of this article is to prove some results on the existence of an integral extension domain of a complete local Noetherian domain in mixed characteristic $p>0$ having certain distinguished properties with respect to the Frobenius map. We prove the main results via Witt vectors and the method of maximal \'etale extensions. The main results are related to the existence problem of big Cohen-Macaulay algebras with certain distinguished properties via recent results of Y. Andr\'e.
\end{abstract}

\maketitle

\section{Introduction}

All rings in this article are assumed to be commutative and unitary. Let $A^+$ be the \textit{absolute integral closure} of an integral domain $A$ (see Definition \ref{absoluteintegral}). We set $\mathbb{F}_p:=\mathbb{Z}/p\mathbb{Z}$ for a prime integer $p>0$. The aim of this article is to investigate the following problem.

\begin{Problem}
\label{problem1}
Assume that $A$ is a $($not necessarily Noetherian$)$ integral domain of mixed characteristic $p>0$ and $A^+$ is the absolute integral closure of $A$. Then does there exist an $A$-algebra $T$ such that $A \subset T \subset A^+$ together with a non-zero non-unit element $\pi \in T$, $T/\pi T$ is an $\mathbb{F}_p$-algebra and the Frobenius endomorphism is bijective $($or surjective$)$ on $T/\pi T$ ?
\end{Problem}

In addition to the conditions stated in Problem \ref{problem1}, we require $A \to T$ to be a reasonably small integral extension (we will explain the meaning of "reasonably small" later). Problem \ref{problem1} has applications by combining it with the theory of (ramified) Witt vectors, especially to the construction of big Cohen-Macaulay algebras (see \cite{Ho07} and \cite{Rob12} and Proposition \ref{BigMac} below), and the construction of rings (called the \textit{Fontaine rings}) used in $p$-adic Hodge theory. One of the main streams into the $p$-adic Hodge theory is attributed to the \textit{Almost Purity Theorem} originally proved by Faltings (see \cite{DavKed}, \cite{KedRuo} and \cite{Sch} for the almost purity theorem and \cite{Shim2} for its application to the homological conjectures), In $p$-adic Hodge theory, the surjectivity of the Frobenius is an important issue. The element $\pi$ is usually taken to be a uniformizing parameter of a discrete valuation ring $V$. If the Frobenius endomorphism is bijective (resp. surjective) on an $\mathbb{F}_p$-algebra, then we say that it is \textit{perfect} (resp. \textit{semiperfect}). While almost nothing is known in the case of perfect algebras as required in Problem \ref{problem1}, the case of semiperfect algebras has been explored deeply in connection with \textit{Perfectoid Spaces} developed by Scholze \cite{Sch} and it has been strengthened as \textit{Perfectoid Abhyankar's Lemma} in Andr\'e's recent paper \cite{An1}. As a byproduct, he proved that a complete local domain of mixed characteristic admits a big Cohen-Macaulay algebra \cite{An2}. 

In this article, we attack Problem \ref{problem1} by taking $A$ to be a complete local Noetherian domain with mixed characteristic. We note that a key idea of Problem \ref{problem1} is contained in the following problem.

\begin{Problem}
\label{problem2}
Assume that $A$ is a $($not necessarily Noetherian$)$ normal domain of mixed characteristic $p>0$ and $A^+$ is the absolute integral closure of $A$. Then describe the maximal \'etale extension of $A$ inside $A^+$.
\end{Problem}

A precise definition of maximal \'etale extensions will be given in Definition \ref{maximaletale}. We now state our main theorems as a partial answer to Problem \ref{problem1} and Problem \ref{problem2}, which deals with the bijectivity of the Frobenius map (see Theorem \ref{theorem1}).

\begin{maintheorema}
Let $S$ be a complete local domain of mixed characteristic $p>0$ with finite residue field. Then there exists an $S$-algebra $T$ with a non-zero non-unit element $\pi \in T$ such that the following conditions hold:

\begin{enumerate}
\item[$\mathrm{(i)}$]
$T$ is a normal domain and $S \subset T \subset S^+$. 

\item[$\mathrm{(ii)}$]
$T/\pi T$ is a reduced $\mathbb{F}_p$-algebra.

\item[$\mathrm{(iii)}$]
For any prime ideal $P$ of $T$ that is minimal over $\pi T$, the Frobenius endomorphism is bijective on the quotient ring $T/P$.
\end{enumerate}
\end{maintheorema}

We make a couple of comments. The most crucial part of Main theorem A is that $\pi T$ is a radical ideal of $T$. Let $\{P_i\}_{i \in \Lambda}$ be the set of all prime ideals of $T$ that are minimal over $\pi T$. Then we have $\pi T=\sqrt{\pi T}=\cap_{i \in \Lambda} P_i$. If we ignore the requirement that $\pi T$ is a radical ideal, it is immediate to see that the absolute integral closure $T=S^+$ satisfies all other conditions (the surjectivity of the Frobenius map on $S^+/pS^+$ is clear in view of the fact that any monic polynomial $f(X) \in S^+[X]$ splits into the product of linear factors). The reason for assuming the finiteness of the residue field in the theorem is that it is crucial to use an extension of the Witt-Frobenius map on the (usual) $p$-typical Witt vectors to its ramifed extension. Historically, this idea was introduced by V. Drinfeld. We will see that our proof of the main theorem sheds light on the structure of $T$ in more details; the ring $T$ is relatively small compared with $S^+$ and carries specific information about the ramification over some big ring $R_{\infty}$, which has certain properties and is introduced in the main context (see Definition \ref{bigbasic}). Indeed, this is concerned about the second main theorem below. Let $\mathbf{W}(\mathbb{F})$ be the ring of Witt vectors of a finite field $\mathbb{F}$ of characteristic $p>0$ and let $(V,\pi,\mathbb{F})$ be a discrete valuation ring which is finite flat over $\mathbf{W}(\mathbb{F})$. Then we have the following result (see Corollary \ref{theorem2}):

\begin{maintheoremb}
Assume that $R:=V[[x_2,\ldots,x_d]] \hookrightarrow S$ is a module-finite extension of complete local domains such that $R[\frac{1}{a}] \to S[\frac{1}{a}]$ is \'etale for some $a \in R$ and the height of the ideal $(\pi,a)$ of $R$ is 2. Then the $S$-algebra $T$ in Main Theorem A can be taken to satisfy the following properties:
\begin{enumerate}
\item[$\mathrm{(i)}$]
There is a commutative diagram of integral domains
$$
\begin{CD}
R_{\infty} @>>> T \\
@AAA @AAA \\
R @>>> S \\
\end{CD}
$$
where each map is injective and integral. Moreover, the ring map
$$
R_{\infty}/\pi R_{\infty}[\frac{1}{a}] \to T/\pi T[\frac{1}{a}],
$$
which is induced by $R_{\infty} \to T$, is the filtered colimit of finite \'etale $R_{\infty}/\pi R_{\infty}[\frac{1}{a}]$-algebras and the Frobenius endomorphism is bijective on $T/\pi T[\frac{1}{a}]$.

\item[$\mathrm{(ii)}$]
Fix a prime ideal $P$ of $T$ that is minimal over $\pi T$. Then there exists a ring automorphism:
$$
\mathbf{F}:T \xrightarrow{\sim} T
$$
such that $\mathbf{F}(P)=P$ and the induced map $\overline{\mathbf{F}}:T/P \xrightarrow{\sim} T/P$ coincides with the $q$-th power map with $q:=|\mathbb{F}|$ and $\mathbb{F}=V/\pi V$.
\end{enumerate}
\end{maintheoremb}

The conclusion of the above theorem is that there is a lift of the $q$-th power map from $T/P$ to $T$. Combining both Main Theorem A and Main Theorem B, it seems reasonable to guess that $T/\pi T$ is a perfect algebra. However, we can only say that this is a subtle question. 

Next, let us turn our attention to the construction of a semiperfect algebra which allows a deep ramification over $p$, as a reasonably small integral extension over a complete local domain. In this case, we consider the situation where surjectivity of the Frobenius map holds, while injectivity of the Frobenius map fails. It should be noted that the resulting algebra is much smaller than its absolute integral closure. More precisely, we prove the following theorem (see Theorem \ref{theorem3}) which is valid for complete local domains with an \textit{arbitrary} residue field of characteristic $p>0$.

\begin{maintheoremc}
Let $S$ be a complete local domain with mixed characteristic $p>0$. Then there exists an $S$-algebra $T$ such that the following hold:

\begin{enumerate}
\item[$\mathrm{(i)}$]
$T$ is a normal domain and $S \subset T \subset S^+$.

\item[$\mathrm{(ii)}$]
There is an element $\pi \in T$ such that $\pi^p=p$ and the Frobenius endomorphism is surjective on $T/pT$, which induces an isomorphism:
$$
T/\pi T \cong T/pT.
$$

\item[$\mathrm{(iii)}$]
There exist a complete discrete valuation ring $V$, a regular local sub-algebra
$$
R:=V[[t_2,\ldots,t_d]] \subset T
$$
together with an element $a \in R$, and a complete local normal domain $S'$ such that $R \subset S' \subset T$, where $R \to S'$ is module-finite, $S' \to T$ is integral, the height of the ideal $(p,a)$ of $R$ is 2, and the localization maps:
$$
R[\frac{1}{a}] \to S'[\frac{1}{a}]~\mbox{and}~S'[\frac{1}{p}] \to T[\frac{1}{p}]
$$
are ind-\'etale. In particular, 
$$
R[\frac{1}{pa}] \to T[\frac{1}{pa}]
$$
is ind-\'etale.
\end{enumerate}
\end{maintheoremc}

Let us remark that the local domain $S$ in the above theorem has no inclusion relation with $R$. The present article is seen as a sequel of author's attempt \cite{Shim2} and \cite{Shim3} to understand the nature of rings of mixed characteristic via Frobenius and Witt vectors. Recently, Andr\'e proved that any complete local domain of mixed characteristic has a big Cohen-Macaulay algebra in \cite{An1}. We will apply the main results of this paper to study further properties of big Cohen-Macaulay algebras in \cite{Shim4}. The author believes that the results and techniques in this paper will shed more light on the structure of rings in mixed characteristic.

\subsection{Outline of the paper}
In \S~\ref{sec1}, we introduce some notation and recall definitions of some ring theory in the non-Noetherian context.

In \S~\ref{sec2}, we give only basic part of the theory of Witt vectors with emphasis on lifting of rings of positive characteristic with its Frobenius map to rings of mixed characteristic.

In \S~\ref{sec3}, we discuss normality and \'etale ring maps in the non-Noetherian context and it is important to give special care to how these notions are defined in this generality.

In \S~\ref{sec4}, we discuss the ramified Witt vectors due to Drinfeld to the extent we need (see \cite{CD} for more details) and its connection with the construction of big Cohen-Macaulay algebras. This section will be a key part for constructing some big rings.

In \S~\ref{sec5}, we discuss Gabber's refinement of classical Cohen's structure theorem on complete local rings.

In \S~\ref{sec6}, we introduce a basic ring denoted $R_{\infty}$ and discuss its basic properties. Then we introduce the notion of \textit{maximal \'etale extension} with respect to a torsion free ring extension $A \to B$ such that $A$ is normal domain and $B$ is reduced. The author thinks that this is a fundamental notion in commutative algebra, although no relevant reference for pure algebraists has been found (see also \cite{GroRay}).

In \S~\ref{sec7}, we introduce more rings of mixed characteristic. They are defined to be stable under the $q$-Witt-Frobenius map and these rings turn out to give a partial answer to the problems in the introduction.

In \S~\ref{sec8}, we establish Main Theorem A and Main Theorem B. Some remarks concerning rings constructed in this article are made for future's research.

In \S~\ref{sec9}, we establish Main Theorem C, where we construct a certain integral extension of a complete local domain which is semiperfect, but not perfect. Semiperfect algebras appear in the construction of Fontaine rings.

\section{Notation}
\label{sec1}

All rings in this article are commutative with unity. However, we do not always assume rings to be Noetherian. A \textit{local ring} is a Noetherian ring with a unique maximal ideal. The characteristic of this article is highly non-Noetherian and we will need to consider an infinite integral extension of a Noetherian domain.

\begin{definition}[\cite{Ar}]
\label{absoluteintegral}
Let $A$ be an integral domain. Then the \textit{absolute integral closure} of $A$ is defined to be the integral closure of $A$ in the algebraic closure of the field of fractions of $A$. We denote this ring by $A^+$.
\end{definition}

Note that if $A$ is not a field, then $A^+$ is not Noetherian. We refer the reader to \cite{As}, \cite{AsShim} for the homological aspect of the absolute integral closures of Noetherian domains.

\begin{definition}
Let $p>0$ be a prime number. Say that a ring $A$ is \textit{$p$-torsion free}, if $p$ is a non-zero divisor in $A$. Say that a $p$-torsion free ring $A$ is of \textit{mixed characteristic $p>0$}, if $pA \ne A$.
\end{definition}

\begin{definition}
An $\mathbb{F}_p$-algebra $B$ is \textit{perfect} (resp. \textit{semiperfect}), if the Frobenius map is bijective (resp. surjective) on $B$. 
\end{definition}

Let $A$ be an $\mathbb{F}_p$-domain. Then there are standard ways to find perfect algebras containing $A$. One is to take the perfect closure of $A$, which is obtained by adjoining all $p$-power roots of all elements of $A$. The second is to take the absolute integral closure of $A$. It is quite crucial to make an essential use of the Witt vectors in this article. There are many flavors of Witt vectors in the literature, however the only Witt vectors we consider are the ($p$-typical or ramified) Witt vectors of perfect $\mathbb{F}_p$-algebras. A basic reference of $p$-typical Witt vectors is Serre's book \cite{Se}. Another good source is an expository paper \cite{Rab}. The theory of the ramified Witt vectors is quite similar to the theory of the $p$-typical Witt vectors. However, it is necessary to assume the residue field $\mathbb{F}$ to be finite and the category of $\mathbf{W}(\mathbb{F})$-algebras is replaced with the category of algebras over a valuation ring that is finite over $\mathbf{W}(\mathbb{F})$ (see \cite{CD} for details). We will give a review of (ramified) Witt vectors to the extent we need in the main context.

The notation $\Frac(A)$ stands for the total ring of fractions for a ring $A$. If $A$ is reduced with only finitely many minimal primes, then $\Frac(A)$ is a finite direct product of fields. A ring map $R \to S$ is \textit{torsion free}, if every regular element in $R$ is also regular in $S$. This is equivalent to the condition that the natural map $S \to \Frac(R) \otimes_R S$ is injective.

\begin{remark}
One should be cautious in the use of the phrase: \textit{the Witt-Frobenius map on a ring $S$}. After a pair $(\mathbf{W}(A),i)$ has been fixed, where $i:S \hookrightarrow \mathbf{W}(A)$ is a ring injection, the Witt-Frobenius map on $S$ is defined by restricting it from $\mathbf{W}(A)$ to $S$. So it depends on an embedding of $S$ into some ring of Witt vectors.
\end{remark}

\section{$\pi$-adic deformation and Witt vectors}
\label{sec2}

\subsection{Witt vectors and the Frobenius}
In this section, we introduce a notion of $\pi$-adic deformation of rings and discuss its relation to Witt vectors. We will later discuss the ramified Witt vectors in detail. Fix a prime number $p>0$ and a ring $A$ of arbitrary characteristic. Denote by $\mathbf{W}(A)$ (resp. $\mathbf{W}_{p^n}(A)$) the ring of (\textit{p-typical}) \textit{Witt vectors} (resp. \textit{Witt vectors of length n}). Then one has a set-theoretic identity: $\mathbf{W}_{p^n}(A)=A^{n+1}$ and a ring-theoretic isomorphism: $\mathbf{W}(A) \cong \varprojlim_n \mathbf{W}_{p^n}(A)$. There is a well-defined ring homomorphism called the \textit{Witt-Frobenius map}: 
$$
\mathbf{F}:\mathbf{W}(A) \to \mathbf{W}(A),
$$ 
which is described as follows. If $A$ is a ring of prime characteristic $p>0$, then $\mathbf{F}(r_1,r_2,\ldots,r_n,\ldots)=(r_1^p,r_2^p,\ldots,r_n^p,\ldots)$. A more general formula is found in \cite[Remark 1.5]{DavKed}. We often use the symbol "Frob" to denote the $p$-th power map: $x \mapsto x^p$ for $x \in R$ and  ring $R$ of characteristic $p>0$ to distinguish it from the similar map $\mathbf{F}$ as above. It is easy to see that if $B$ is a perfect $\mathbb{F}_p$-domain which is not a field, then $B$ is not Noetherian. Fix a ring $B$ of prime characteristic $p>0$. Then can one find a ring $A$ of mixed characteristic $p>0$ such that $A/pA \cong B$? This leads to the following definition:

\begin{definition}
Let $B$ be a ring. Then a ring $A$ with a non-zero non-unit element $\pi \in A$ is a \textit{$\pi$-adic deformation} of $B$, if $A$ is $\pi$-torsion free, complete in the $\pi$-adic topology and $A/\pi A \cong B$.
\end{definition}

Let us collect some basic properties of Witt vectors which we often use (see \cite{Se} for more details).

\begin{enumerate}
\item[$\bullet$]
If $A$ is a perfect $\mathbb{F}_p$-algebra, then $\mathbf{W}(A)$ is a $p$-adically complete, $p$-torsion free algebra, and there is a surjection $\pi_A:\mathbf{W}(A) \twoheadrightarrow A$ with kernel generated by $p$.

\item[$\bullet$]
If $A$ is a perfect $\mathbb{F}_p$-algebra, then there is a multiplicative injective map $f_A:A \to \mathbf{W}(A)$ such that the composite map
$$
A \xrightarrow{f_A} \mathbf{W}(A) \xrightarrow{\pi_A} A
$$
is an identity map. $f_A$ is called the \textit{Teichm\"uller mapping} (see Proposition \ref{Serre} below). For any given $x \in \mathbf{W}(A)$, there exists a unique sequence of elements $a_0,a_1,a_2,\ldots \in A$ such that $x=f_A(a_0)+f_A(a_1) p+f_A(a_2) p^2+\cdots$, which we call the \textit{Witt representation} of $x$. We note the following simple fact.
$$
p|x \iff a_0=0.
$$

\item[$\bullet$]
If $A \to B$ is a ring homomorphism of perfect $\mathbb{F}_p$-algebras, then there is a unique ring homomorphism $\mathbf{W}(A) \to \mathbf{W}(B)$
making the following commutative square:
$$
\begin{CD}
\mathbf{W}(A) @>>> \mathbf{W}(B) \\
@V\pi_AVV @V\pi_BVV \\
A @>>> B \\
\end{CD}
$$
\end{enumerate}

Indeed, we have a unique $p$-adic deformation for a perfect $\mathbb{F}_p$-algebra, as stated in the following proposition.

\begin{proposition}
\label{prop}
Let $A$ be a perfect $\mathbb{F}_p$-algebra. Then $A$ admits a unique $p$-adic deformation $\mathcal{A}$. Moreover, if $A \to B$ is a ring homomorphism of perfect $\mathbb{F}_p$-algebras, there exists a unique commutative diagram:
$$
\begin{CD}
\mathcal{A} @>>> \mathcal{B} \\
@VVV @VVV \\
A @>>> B
\end{CD}
$$
such that both $\mathcal{A} \to A$ and $\mathcal{B} \to B$ are $p$-adic deformations. In particular, we have
$$
\Aut(A) \cong \Aut(\mathcal{A}).
$$
\end{proposition}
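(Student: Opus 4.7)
The plan is to take $\mathcal{A} := \mathbf{W}(A)$, the ring of $p$-typical Witt vectors. The bulleted properties listed just above the proposition already give that $\mathbf{W}(A)$ is $p$-adically complete, $p$-torsion-free, and satisfies $\mathbf{W}(A)/p\mathbf{W}(A) \cong A$, so existence of a $p$-adic deformation is essentially for free. All the real work lies in uniqueness.

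For uniqueness I would proceed as follows. Let $\mathcal{A}'$ be any other $p$-adic deformation of $A$ with reduction $\pi':\mathcal{A}'\twoheadrightarrow A$. The first step is to construct a multiplicative section $\tau:A\to\mathcal{A}'$ of $\pi'$: for $a\in A$, using that $A$ is perfect, choose arbitrary lifts $b_n\in\mathcal{A}'$ of the $p^n$-th roots $a^{1/p^n}$, and set $\tau(a):=\lim_n b_n^{p^n}$. The standard estimate that $b_n^p\equiv b_{n-1}\pmod{p\mathcal{A}'}$ combined with the $p$-torsion-freeness of $\mathcal{A}'$ shows that the limit exists in the $p$-adic topology and is independent of the choices, and that $\tau$ is multiplicative. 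Then, given any $x\in\mathbf{W}(A)$ with Witt representation $x=\sum_{n\ge 0}[a_n]p^n$, I define $\Phi(x):=\sum_{n\ge 0}\tau(a_n)p^n\in\mathcal{A}'$, which converges by $p$-adic completeness of $\mathcal{A}'$. That $\Phi$ is a ring homomorphism follows from the universal property of $\mathbf{W}(A)$: the Witt addition/multiplication polynomials are characterized by their ghost-component identities, which transfer verbatim to any target ring equipped with a multiplicative Teichm\"uller-type section from $A$. By construction $\Phi$ reduces to the identity on $A$, so it is surjective mod $p$; combined with $p$-torsion-freeness and $p$-adic completeness of both sides, a standard d\'evissage on $p^n$-quotients shows $\Phi$ is an isomorphism.

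For the functorial statement, given $A\to B$ of perfect $\mathbb{F}_p$-algebras, the third bullet already produces a unique ring map $\mathbf{W}(A)\to\mathbf{W}(B)$ compatible with the reductions; transporting via the isomorphisms just constructed yields the desired commutative square. Uniqueness of the lift $\mathcal{A}\to\mathcal{B}$ is proved by the same mechanism: two lifts agree modulo $p$, and an induction on $n$ using $p$-torsion-freeness of $\mathcal{B}$ shows they agree modulo $p^{n+1}$ for all $n$, hence coincide after passing to the $p$-adic completion. The claim $\Aut(A)\cong\Aut(\mathcal{A})$ is then formal: functoriality gives a homomorphism $\Aut(A)\to\Aut(\mathcal{A})$ which is injective by the uniqueness of lifts, and surjective because any automorphism of $\mathcal{A}$ preserves $p\mathcal{A}$ and therefore descends to $A$, after which functorial uniqueness identifies the descent's lift with the original automorphism.

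The main obstacle I expect is the careful verification that $\tau$ is well-defined and multiplicative, and that $\Phi$ is genuinely a ring homomorphism and not merely additive and multiplicative on Teichm\"uller representatives; this is the classical but slightly delicate heart of the argument, and it is the one place where perfectness of $A$ and $p$-torsion-freeness of $\mathcal{A}'$ are both used in an essential way. Everything else (existence of the deformation, the functorial square, and the automorphism statement) is either recalled from the bulleted list or is a routine consequence of $p$-adic completeness once the uniqueness of lifts has been established.
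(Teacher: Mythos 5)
Your proposal is correct and is essentially the paper's argument: the paper also takes $\mathcal{A}=\mathbf{W}(A)$ and simply refers the uniqueness to Serre (\emph{Local Fields}, Ch.~II \S 5, Prop.~10), whose proof is exactly the Teichm\"uller-section/limit construction and d\'evissage you spell out. The only difference is presentational — you reproduce the classical argument in detail (with the homomorphism property of $\Phi$ being the delicate point, as you note), while the paper cites it and mentions the cotangent-complex alternative.
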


\begin{proof}
The ring $\mathcal{A}$ is given as the Witt vectors $\mathbf{W}(A)$ and the proof of the uniqueness is found in \cite[Proposition 10 in Chapter II \S 5]{Se}. Alternatively, one may use the cotangent complex to avoid the use of Witt vectors \cite[Theorem 5.11 and Theorem 5.12]{Sch}.
\end{proof}

\begin{lemma}
\label{p-adic}
Assume that $B$ is a reduced ring. Then a $\pi$-adic deformation of $B$ is also  reduced.
\end{lemma}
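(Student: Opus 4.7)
The plan is to show directly that any nilpotent element of $A$ must lie in $\bigcap_{n \ge 1} \pi^n A$, and then invoke $\pi$-adic separatedness (which is built into the definition of $\pi$-adic completeness) to conclude that it must be zero. The two ingredients we have to exploit are that $\pi$ is a non-zero-divisor on $A$ and that the reduction $A/\pi A \cong B$ is reduced.

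First I would fix a nilpotent element $x \in A$, say $x^n = 0$ for some $n \ge 1$. Its image $\bar{x} \in A/\pi A \cong B$ is nilpotent, and since $B$ is reduced, $\bar{x} = 0$. Hence $x = \pi y$ for some $y \in A$. Then
\[
0 = x^n = \pi^n y^n,
\]
and because $\pi$ is a non-zero-divisor on $A$, we can cancel $\pi^n$ to obtain $y^n = 0$. So $y$ is again nilpotent, and the same argument applied to $y$ shows $y \in \pi A$, giving $x \in \pi^2 A$. Iterating this step (a straightforward induction on $k$, using $\pi$-torsion-freeness at each stage to cancel a power of $\pi$) yields $x \in \pi^k A$ for every $k \ge 1$.

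Finally, since $A$ is $\pi$-adically complete, the natural map $A \to \varprojlim_k A/\pi^k A$ is an isomorphism, so in particular $A$ is $\pi$-adically separated, i.e. $\bigcap_{k \ge 1} \pi^k A = 0$. Therefore $x = 0$, and $A$ is reduced.

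There is essentially no serious obstacle here; the only thing to watch is the convention that "$\pi$-adically complete" is taken to include "$\pi$-adically separated", which is standard and is how the notion has been used throughout the section. The argument does not require any hypothesis on $B$ beyond reducedness, nor any structural information about $A$ beyond being a $\pi$-adic deformation of $B$.
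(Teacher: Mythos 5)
Your proof is correct and is essentially identical to the paper's argument: both show that a nilpotent element of $A$ lies in $\bigcap_{k\ge 1}\pi^k A$ by repeatedly reducing mod $\pi$ (using reducedness of $B$) and cancelling $\pi$ (using $\pi$-torsion-freeness), and then conclude by $\pi$-adic separatedness.
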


\begin{proof}
Let $A$ be a $\pi$-adic deformation of $B$. Since $A$ is $\pi$-adically complete, it is $\pi$-adically separated. Assume that there is an element $y_1 \in A$ such that $y_1^n=0$ for some $n>0$. Then since $B=A/\pi A$ is reduced, the image of $y_1$ in $B$ is zero. Thus, we have $y_1=\pi y_2$ for some $y_2 \in A$ and $\pi^n y_2^n=0$. Since $\pi$ is a non-zero divisor, we have $y_2^n=0$. Then we may find $y_3 \in A$ for which $y_2=\pi y_3$ and $y_1=\pi^2 y_3$. Continuing this argument, we get 
$$
y_1 \in \bigcap_{n>0} \pi^n A=0,
$$
due to the fact that $A$ is $\pi$-adically separated. Hence $y_1=0$, as desired.
\end{proof}

\section{Normality of rings}
\label{sec3}

\subsection{\'Etale ring map}
Let us start with a review on \'etale ring maps in a general form. We refer the reader to \cite{Stacks} as a standard reference.

\begin{definition}
Let $A \to B$ be a ring map. Then $A \to B$ is \textit{\'etale} if it is flat, unramified and of finite presentation. $A \to B$ is \textit{finite \'etale} if it is \'etale and integral. Finally, $A \to B$ is \textit{ind-\'etale} if $B$ is obtained as the filtered colimit of \'etale $A$-algebras.
\end{definition}

For \'etale ring maps, we have standard results.

\begin{lemma}
Let $A \to B$ be a ring map.

\begin{enumerate}
\item[$\mathrm{(i)}$]
A composite of \'etale ring maps is \'etale.

\item[$\mathrm{(ii)}$]
The base change of an \'etale ring map is \'etale.

\item[$\mathrm{(iii)}$]
Assume that $A \to B$ is of finite presentation. Then $A \to B$ is formally \'etale if and only if it is \'etale.
\end{enumerate}
\end{lemma}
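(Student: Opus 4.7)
The plan is to prove each of the three parts using the characterization \'etale $=$ flat $+$ unramified $+$ finite presentation just recalled. Parts (i) and (ii) amount to verifying that each of these three conditions is stable under composition, respectively base change, and each check is routine once one invokes the right cotangent sequence. The content of (iii) lies in the converse direction: one must show that finite presentation together with formal \'etaleness forces flatness, which is the only serious input.

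For (i), given \'etale maps $A \to B \to C$, flatness and finite presentation pass to the composite by standard closure properties. For the unramified condition I would use the first fundamental exact sequence
$$\Omega_{B/A} \otimes_B C \to \Omega_{C/A} \to \Omega_{C/B} \to 0,$$
whose outer terms vanish by hypothesis, forcing $\Omega_{C/A}=0$. For (ii), given $A \to B$ \'etale and $A \to A'$ arbitrary, flatness and finite presentation are stable under base change, and K\"ahler differentials transform by $\Omega_{(B\otimes_A A')/A'} \cong \Omega_{B/A} \otimes_A A'$, which vanishes since $\Omega_{B/A}=0$.

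For (iii), the direction "\'etale implies formally \'etale" follows from the infinitesimal lifting criterion: the vanishing of $\Omega_{B/A}$ gives formal unramifiedness (uniqueness of lifts across a square-zero ideal), while flatness and relative dimension zero (equivalently, smoothness) give formal smoothness (existence of such lifts); together these yield formal \'etaleness. The converse is where the real work lies. Assuming $A \to B$ is finitely presented and formally \'etale, I would decompose formal \'etaleness as formal smoothness plus formal unramifiedness. Formal unramifiedness together with finite type yields $\Omega_{B/A}=0$, hence the unramified condition. Formal smoothness together with finite presentation yields smoothness, and in particular flatness, by Grothendieck's theorem on formal smoothness. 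Combined with $\Omega_{B/A}=0$, this gives \'etaleness. The one nontrivial ingredient is precisely the implication "finitely presented $+$ formally smooth $\Rightarrow$ smooth"; its proof is substantive (it passes through the Jacobian criterion and a descent argument), so I would cite it from \cite{Stacks} rather than reproduce it here.
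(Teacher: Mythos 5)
The paper states this lemma without proof; it is presented as a recollection of standard facts, with \cite{Stacks} cited immediately beforehand as the general reference for \'etale ring maps. There is therefore no argument in the paper to compare yours against. Your outline is correct and matches the standard treatment: parts (i) and (ii) follow from stability of flatness and finite presentation under composition and base change together with the cotangent exact sequence and the base-change formula for K\"ahler differentials; part (iii) reduces, in the hard direction, to Grothendieck's theorem that a finitely presented formally smooth morphism is smooth, hence flat, which is exactly the right thing to cite rather than reprove. One small wording nit in (iii): you describe the forward direction as ``flatness and relative dimension zero give formal smoothness,'' which conflates two steps; cleaner is to say that flat $+$ unramified $+$ finitely presented means smooth of relative dimension zero, and smoothness gives formal smoothness by the infinitesimal lifting criterion. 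The substance is right.
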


\begin{example}
\begin{enumerate}
\item
Let $A$ be a ring. A simple example is that for a non-nilpotent element $f \in A$, the localization $A \to A[\frac{1}{f}]$ is \'etale. Indeed, it is clear that it is flat and unramified. Furthermore, we see by the universal property of localization that $A[\frac{1}{f}]$ is isomorphic to a finite presentation $A[x]/(fx-1)$. The most important case is when $f$ is an idempotent. 

\item
There is an example of a ring map $A \to B$ which is finite, flat ,but not of finite presentation. So it is necessary to assume all the specified conditions in the definition of \'etale ring maps.
\end{enumerate}
\end{example}

\subsection{Normality criteria}
By a \textit{normal ring}, we mean a ring $A$ such that $A_{\fp}$ is an integrally closed domain in its field of fractions for all prime ideals $\fp$ of $A$. In particular, $A$ is reduced.

\begin{lemma}
\label{normal}
Assume that $A$ is a reduced ring with finitely many minimal prime ideals.
\begin{enumerate}
\item[$\mathrm{(i)}$]
$A$ is normal if and only if it is integrally closed in its total ring of fractions. 

\item[$\mathrm{(ii)}$]
If $A$ is a normal ring with minimal prime ideals $\fp_1,\ldots,\fp_n$, then 
$$
A \cong A/\fp_1 \times \cdots \times A/\fp_n.
$$
\end{enumerate}
\end{lemma}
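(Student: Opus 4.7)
The plan is to prove (ii) first and then use the decomposition to handle (i). The underlying book-keeping fact is that because $A$ is reduced with finitely many minimal primes $\fp_1,\ldots,\fp_n$, the set of zero-divisors of $A$ equals $\bigcup_i \fp_i$; hence the total ring of fractions decomposes as
\[
\Frac(A) \;\cong\; K_1 \times \cdots \times K_n, \qquad K_i := \Frac(A/\fp_i),
\]
and the natural injection $A \hookrightarrow \prod_i A/\fp_i$ sits inside $\Frac(A)$ factor-wise. I would record this at the start, since both parts rely on it.

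For (ii), I would first show that normality forces the minimal primes to be pairwise comaximal. Suppose for contradiction that $\fp_i + \fp_j \subsetneq A$ for some $i \ne j$; pick a maximal ideal $\fm$ containing $\fp_i + \fp_j$. Since $A$ is normal, $A_\fm$ is a normal domain, so its unique minimal prime is $(0)$. But both $\fp_i A_\fm$ and $\fp_j A_\fm$ are minimal primes of $A_\fm$ (being contractions-back of minimal primes under the injective map $\Spec A_\fm \hookrightarrow \Spec A$), so both must be $(0)$, forcing $\fp_i = \fp_j$, a contradiction. The Chinese Remainder Theorem then yields $A = A/\bigcap_i \fp_i \cong \prod_i A/\fp_i$, and each $A/\fp_i$ is a summand of $A$, so its localizations are localizations of $A$ and are therefore normal domains; hence each $A/\fp_i$ is a normal domain.

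For (i), the forward direction follows immediately from (ii): given an integral element $x=(x_1,\ldots,x_n)\in\prod_i K_i = \Frac(A)$ satisfying a monic equation over $A$, projecting onto each factor shows $x_i$ is integral over $A/\fp_i$, hence lies in $A/\fp_i$ by normality of that domain; thus $x \in \prod_i A/\fp_i = A$. For the converse, assume $A$ is integrally closed in $\Frac(A) = \prod_i K_i$. The factor idempotents $e_1,\ldots,e_n \in \prod_i K_i$ satisfy $e_i^2 = e_i$, so they are integral over $A$, and therefore $e_i \in A$. This gives a ring decomposition $A = \prod_i Ae_i$, and one checks $Ae_i \cong A/\fp_i$ sits inside $K_i$ as an integrally closed subring (being a direct summand of $A$). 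Thus each $A/\fp_i$ is a normal domain, and any localization of $A$ at a prime $\fp$ agrees with a localization of the unique $A/\fp_j$ with $e_j \notin \fp$, which is again a normal domain. Hence $A$ is normal.

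The only step that requires a little care is the comaximality argument in (ii); once that is in place the rest is book-keeping with idempotents and the product decomposition of $\Frac(A)$. I expect this comaximality step to be the main obstacle, mostly because one has to remember that in a reduced ring with finitely many minimal primes the non-zero divisors are the complement of $\bigcup \fp_i$, which is what makes the identification $\Frac(A) \cong \prod K_i$ (and thus the entire argument) work.
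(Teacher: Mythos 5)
The paper does not actually give a proof here: it simply cites Swanson--Huneke (Lemma 2.1.15 for part (i) and Corollary 2.1.13 for part (ii)), so there is no argument in the text to compare against. Your self-contained proof is correct. The decomposition $\Frac(A)\cong\prod_i K_i$, the comaximality argument via localization at a maximal ideal containing $\fp_i+\fp_j$, and the idempotent argument for the converse of (i) are all sound; the claim that $Ae_i$ is integrally closed in $K_i$ follows, as you indicate, by lifting an integral equation for $x\in K_i$ to the element $(0,\ldots,0,x,0,\ldots,0)$ of $\Frac(A)$. One small stylistic point: your proof of (ii) (comaximality plus CRT) and your proof of the backward direction of (i) (idempotents) each independently produce the product decomposition, so there is some duplication. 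The more common arrangement --- and likely what the cited reference does --- is to prove (i) first entirely via idempotents and then read (ii) off as an immediate corollary. Your version works just as well; it simply front-loads (ii), at the cost of a mild redundancy. You should also note explicitly the standard fact that localizations of an integrally closed domain are again integrally closed, which you invoke implicitly when passing from ``each $A/\fp_i$ is an integrally closed domain'' to ``$A$ is normal''.
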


\begin{proof}
The first statement is in \cite[Lemma 2.1.15]{SwHu} and the second statement is in \cite[Corollary 2.1.13]{SwHu}.
\end{proof}

In case that $A$ has infinitely many minimal prime ideals, we have the following lemma.

\begin{lemma}
\label{normallemma}
\begin{enumerate}
\item[$\mathrm{(i)}$]
If $A$ is a normal ring, then it is integrally closed in its total ring of fractions.

\item[$\mathrm{(ii)}$]
Assume that $A \subset B$ is a torsion free ring extension such that $B$ is integrally closed in the total ring of fractions of $B$. Then the integral closure of $A$ in the total ring of fractions of $A$ is contained in $B$.
\end{enumerate}
\end{lemma}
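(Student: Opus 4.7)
The plan for part (i) is to take an element $x \in \Frac(A)$ that is integral over $A$, write it as $x = a/s$ with $s \in A$ a non-zero-divisor, and prove that the conductor ideal
$$
I := (sA :_A a) = \{\, r \in A \mid ra \in sA \,\}
$$
coincides with $A$; this is equivalent to $x \in A$. I would verify $I \not\subset \fm$ for each maximal ideal $\fm$ of $A$. Fixing such an $\fm$ and localizing, the map $A \to A_{\fm}$ is flat, so $s$ remains a non-zero-divisor in $A_{\fm}$; hence there is a well-defined ring homomorphism $\Frac(A) \to \Frac(A_{\fm})$ sending $a/s$ to $a/s$. The image satisfies the same monic integral relation over $A_{\fm}$, and since $A_{\fm}$ is by hypothesis a normal (i.e.\ integrally closed) domain, this image lies in $A_{\fm}$. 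Unwinding what this divisibility means inside $A_{\fm}$ and clearing denominators produces some $t \in A \setminus \fm$ with $ta \in sA$, i.e.\ an element of $I$ outside $\fm$, as required.

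For part (ii), the torsion-free hypothesis says exactly that every regular element of $A$ remains regular in $B$, so the multiplicative set $S$ of non-zero-divisors of $A$ maps into the non-zero-divisors of $B$. Hence the inclusion $A \hookrightarrow B$ extends uniquely to an injective ring homomorphism
$$
\Frac(A) \;=\; S^{-1}A \;\hookrightarrow\; S^{-1}B \;\subset\; \Frac(B).
$$
I would then observe that any $x \in \Frac(A)$ integral over $A$ is, via this inclusion, an element of $\Frac(B)$ that is integral over $B$; since $B$ is integrally closed in $\Frac(B)$, this forces $x \in B$, which is the desired containment of the integral closure of $A$ in $B$.

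The main obstacle I anticipate is the bookkeeping in (i): in the non-Noetherian setting the total ring of fractions does not behave entirely transparently under localization (the natural map $\Frac(A)_{\fm} \to \Frac(A_{\fm})$ need not be an isomorphism, and the list of minimal primes may be infinite, preventing a direct appeal to Lemma \ref{normal}). So I would avoid the slogan ``$M_{\fm}=0$ for all $\fm$ implies $M=0$'' for $M = \Frac(A)/A$, and instead work explicitly with the conductor ideal $I$, using flatness of $A \to A_{\fm}$ both to transfer the integral relation and to execute the clearing-of-denominators step. Part (ii) should then be essentially formal once the inclusion of total rings of fractions has been set up.
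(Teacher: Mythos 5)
Your proof is correct and follows essentially the same approach as the paper: for (i), both arguments introduce the conductor ideal $I=\{r\in A\mid rx\in A\}$, localize at a prime (you use maximal ideals, the paper arbitrary primes), map into $\Frac(A_{\fp})$ via flatness so that the integral relation descends and $x$ lands in $A_{\fp}$, then clear denominators to exhibit an element of $I$ outside $\fp$, concluding $I=A$. Part (ii) is identical in both: the torsion-free hypothesis gives $\Frac(A)\hookrightarrow\Frac(B)$, and integral closedness of $B$ finishes it.
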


\begin{proof}
(i): Let $\Frac(A)$ be the total ring of fractions and let $x \in \Frac(A)$ be integral over $A$. Define an ideal of $R$:
$$
I:=\{b \in A~|~bx \in A\}.
$$
For a prime ideal $\fp$ of $A$, the natural map $A_{\fp} \to \Frac(A) \otimes_A A_{\fp}$ is injective. Since $A_{\fp}$ is a domain, we may naturally view $\Frac(A) \otimes_A A_{\fp}$ as a sub-algebra of $\Frac(A_{\fp})$. Moreover, $A_{\fp}$ is an integrally closed domain, so we have $x \otimes 1 \in A_{\fp}$ and there exist elements $f \in A \setminus \fp$ and $a \in A$ such that $x \otimes 1=a \otimes \frac{1}{f}$. Writing this as $(fx-a) \otimes 1=0$, we may find $g \in A \setminus \fp$ such that $gfx=ga.$ Since $ga \in A$, we must get $gf \in I$ and $gf \in A \setminus \fp$. Since $\fp$ range over all prime ideals, the ideal $I$ cannot be contained in any prime ideal. Hence $I=A$ and $x \in A$.

(ii): This follows from the fact that the injection $A \subset B$ extends to an injection $\Frac(A) \subset \Frac(B)$ and the following commutative diagram:
$$
\begin{CD}
\Frac(A) @>>> \Frac(B) \\
@AAA @AAA \\
A @>>> B \\
\end{CD}
$$
\end{proof}

It is noted that the converse of (i) in Lemma \ref{normallemma} is not true.

\begin{lemma}
\label{Koszul}
Let $(f,g)$ be a regular sequence $($in this order$)$ in an integral domain $A$. Then
$$
A=A[\frac{1}{f}] \cap A[\frac{1}{g}].
$$
Moreover, if both $A[\frac{1}{f}]$ and $A[\frac{1}{g}]$ are integrally closed domains, then $A$ is integrally closed.
\end{lemma}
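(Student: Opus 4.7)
The plan is to handle the two assertions in sequence, with the first providing a direct ``local-to-global'' patch and the second following easily from it.

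For the equality $A = A[\tfrac{1}{f}] \cap A[\tfrac{1}{g}]$, the containment $\subset$ is trivial since $f,g$ are non-zero divisors (being regular) and the localizations sit inside $\Frac(A)$. For the reverse inclusion, I would take $x \in A[\tfrac{1}{f}] \cap A[\tfrac{1}{g}]$ and write $x = a/f^n = b/g^m$ with $a,b \in A$. Clearing denominators gives $a g^m = b f^n$, so $a g^m \in f^n A$. The key algebraic input needed is that $(f^n, g^m)$ remains a regular sequence on $A$; once this is granted, $g^m$ is a non-zero divisor on $A/f^n A$ and thus $a \in f^n A$, which yields $x = a/f^n \in A$.

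To justify the regular-sequence claim, I would run a short induction on $n$: the base case $n=1$ is just the hypothesis that $g^m$ (a power of $g$) is a non-zero divisor on $A/fA$, which follows because $g$ is. For the inductive step, from $g^m h \in f^n A \subset fA$ I first get $h \in fA$, write $h = f h'$, then cancel the non-zero divisor $f$ (using that $A$ is a domain) from $g^m f h' = f^n k$ to obtain $g^m h' \in f^{n-1}A$, and apply the induction hypothesis. This is the only computational step and is where some care is needed, but the domain hypothesis makes the cancellation cheap.

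For the second assertion, suppose $A[\tfrac{1}{f}]$ and $A[\tfrac{1}{g}]$ are both integrally closed, and let $x \in \Frac(A)$ be integral over $A$. Since $f, g$ are non-zero divisors in the domain $A$, we have $\Frac(A[\tfrac{1}{f}]) = \Frac(A[\tfrac{1}{g}]) = \Frac(A)$. The element $x$ is then integral over both $A[\tfrac{1}{f}]$ and $A[\tfrac{1}{g}]$, hence lies in each of them by integral closedness. Applying the first part of the lemma, $x \in A$, which proves $A$ is integrally closed. The main (and only real) obstacle is the regular-sequence power step in the first paragraph; everything else is formal manipulation inside $\Frac(A)$.
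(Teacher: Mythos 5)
Your proof is correct and follows essentially the same strategy as the paper's: both reduce to peeling off one factor of $f$ at a time, using that $g$ (hence $g^m$) stays a nonzerodivisor modulo $f$. The paper packages this as a contradiction to the minimality of the exponent on $f$, whereas you isolate it as an inductive claim that $(f^n,g^m)$ is again a regular sequence and then apply it in one step; the underlying cancellation is identical, and the second assertion is handled the same way (intersection of integrally closed subrings of $\Frac(A)$).
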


\begin{proof}
For a given $x \in A[\frac{1}{f}] \cap A[\frac{1}{g}]$, let $m$ be the smallest non-negative integer such that $f^m x \in A$ and put $a=f^m x$. We take another presentation $x=\frac{b}{g^n}$ with $b \in A$, where $n$ is a positive integer, but not necessarily the smallest possible choice. So we have $x=\frac{a}{f^m}=\frac{b}{g^n}$. Then it suffices to prove that $m=0$. Assume $m>0$ to prove the lemma by contradiction. We have $bf^m=ag^n$. Since $(f,g)$ is a regular sequence, there exists an element $a' \in A$ such that $a=a'f$. But then we must have $x=\frac{a'f}{f^m}=\frac{a'}{f^{m-1}}$, which contradicts the minimality of $m$. Hence $m=0$, showing that $x \in A$. Since the intersection of normal domains is normal, the second assertion is clear.
\end{proof}

\begin{lemma}
\label{cor}
Assume that $A$ is a $($not necessarily Noetherian$)$ integral domain and $\pi A$ is a non-zero principal prime ideal of $A$. Let $A_{(\pi)}$ denote the localization of $A$ with respect to the multiplicative set $A \setminus \pi A$ and $B_{(\pi)}:=B \otimes_A A_{(\pi)}$. Then we have the following statements.

\begin{enumerate}
\item[$\mathrm{(i)}$]
If $A_{(\pi)}$ is a normal domain, then $A$ is integrally closed in $A[\frac{1}{\pi}]$.

\item[$\mathrm{(ii)}$]
Let $A \hookrightarrow B$ be an integral extension of normal domains. If $B_{(\pi)}/\pi B_{(\pi)}$ is a reduced ring, then $B/\pi B$ is reduced. In particular, this is satisfied if $A_{(\pi)} \to B_{(\pi)}$ is an ind-\'etale map.
\end{enumerate}
\end{lemma}

\begin{proof}
(i): Let $A^{\no}$ be the normalization of $A$ in $\Frac(A)$. Then it suffices to show that $A=A[\frac{1}{\pi}] \cap A^{\no}$. Take an element $x=\frac{a}{\pi^n} \in A[\frac{1}{\pi}] \cap A^{\no}$ for $n>0$. Then since $A_{(\pi)}$ is a normal domain, we have $A_{(\pi)}=(A^{\no})_{(\pi)}$. Thus we can write $x=\frac{b}{y}$ for some $y \in A \setminus \pi A$ and $b \in A$. Since $A/\pi A$ is an integral domain by assumption, $(\pi,y)$ is an $A$-regular sequence and we get
$$
x=\frac{a}{\pi^n}=\frac{b}{y} \in A[\frac{1}{\pi}] \cap A[\frac{1}{y}]=A
$$
in view of Lemma \ref{Koszul}.

(ii): Assume that $\pi B$ is not radical and deduce a contradiction. Then there exists $t \in B$ such that $t^N \in \pi B$ and $t \notin \pi B$ for some $N>0$. But since $\pi B_{(\pi)}$ is radical by assumption, we have $t \in \pi B_{(\pi)}$ and so we can write
$$
t=\frac{\pi b}{x}~\mbox{for some}~x \in A \setminus \pi A~\mbox{and}~b \in B.
$$
Since $t \in B$, we have $\big(\frac{b}{x}\big)=\big(\frac{t}{\pi}\big) \in B[\frac{1}{\pi}]$ and $\big(\frac{b}{x}\big) \in B[\frac{1}{x}]$. Since $A/\pi A$ is an integral domain, $x$ is a regular element on $A/\pi A$. Hence, $(\pi,x)$ is a regular sequence on $A$ and we have $A=A[\frac{1}{\pi}] \cap A[\frac{1}{x}]$ by Lemma \ref{Koszul}. On the other hand, we claim that
$$
A=A[\frac{1}{\pi}] \cap A[\frac{1}{x}] \to B[\frac{1}{\pi}] \cap B[\frac{1}{x}]
$$
is an integral extension. For this, let $\Frac(A)[X]$ be a polynomial ring over $\Frac(A)$ and let $F(X) \in \Frac(A)[X]$ be the monic minimal polynomial of an element $\alpha \in B[\frac{1}{\pi}] \cap B[\frac{1}{x}]$. Then $\alpha$ is integral over $A[\frac{1}{\pi}]$ and $A[\frac{1}{x}]$ which are both normal domains by normality of $A$. It follows from \cite[Theorem 2.1.17]{SwHu} that $F(X) \in A[\frac{1}{\pi}][X]$ and $F(X) \in A[\frac{1}{x}][X]$. Thus, we have that $F(X) \in A[X]$ and $A \to B[\frac{1}{\pi}] \cap B[\frac{1}{x}]$ is an integral extension of integral domains. Since $B[\frac{1}{\pi}]$ and $B[\frac{1}{x}]$ are normal domains, $B[\frac{1}{\pi}] \cap B[\frac{1}{x}]$ is also normal. This implies that $B[\frac{1}{\pi}] \cap B[\frac{1}{x}]=B$ by normality of $B$. Finally,
$$
\frac{b}{x} \in B[\frac{1}{\pi}] \cap B[\frac{1}{x}]=B,
$$
which yields a contradiction to our hypothesis. Hence we must have $t \in \pi B$.
\end{proof}

\begin{remark}
\label{Frobeniusetale}
Assume that $A \to B$ is a \'etale ring map. If $A/pA$ is a perfect $\mathbb{F}_p$-algebra, then $B/pB$ is also a perfect $\mathbb{F}_p$-algebra. This fact holds more generally for an \textit{absolutely flat} map $A \to B$ (see \cite{Oli} for its definition and \cite[Theorem 3.5.13]{GR} or \cite[Lemma 3.1.5]{KedRuo} for the proof).
\end{remark}

\section{Ramified Witt vectors}
\label{sec4}

\subsection{Ramified Witt vectors}
In this section, we discuss the ramified version of the $p$-typical Witt vectors, which was introduced by Drinfeld \cite{Dr}. We will combine the ramified Witt vectors with Cohen-Gabber theorem, which will be discussed in the next section. Traditionally, the ramified Witt vectors are defined by taking Witt-type polynomials with respect to $\pi$, where $\pi$ is a fixed parameter of a complete discrete valuation ring. It is instructive for us to avoid the heavy calculus of Witt vectors, so I will take a more direct approach to define the ramified Witt vectors whose exposition is found in \cite{Bor} and \cite{CD}. A finite flat extension of discrete valuation rings $(V,\pi_V) \to (W,\pi_W)$ is \textit{totally ramified}, if one has $\pi_V W=\pi_W^f W$ with $f=[W:V]$. Throughout, we fix a perfect field $k$ of characteristic $p>0$ and a totally ramified extension of discrete valuation rings $\mathbf{W}(k) \to V$ together with an element $\pi$ of $V$ generating the maximal ideal. Then it is known that $V=\mathbf{W}(k)[\pi]$ and $\pi$ is defined by an Eisenstein polynomial. To define the $q$-Witt-Frobenius map on the ring of ramified Witt vectors, we need to impose the following restriction on $(V,\pi_V)$.
\begin{enumerate}
\item[($\bf{Fin}$):]
The residue field of $V$ is finite and consists of $q=p^e$ elements. Henceforth, we denote the finite residue field by $\mathbb{F}_q$ or simply by $\mathbb{F}$.
\end{enumerate}

This restriction lets us consider the category of complete local Noetherian rings with finite residue field. In general, there is no extension of the (iterated) Witt-Frobenius map from $\mathbf{W}(k)$ to its finite extension, unless it is an \'etale extension or $k$ is a finite field. First, let $\mathbf{W}(k) \to A$ be a finite \'etale extension. Then one has $A \cong \mathbf{W}(k')$ for a finite extension $k \to k'$ and $A$ has the Witt-Frobenius map. Next, let $\mathbf{W}(k) \to A$ be a (possibly ramified) finite extension and $k$ is a finite field with $|k|=p^e$. Then the $e$-th iterated Frobenius map is the identity on $\mathbf{W}(k)$ and hence it extends as the identity on $A$.

\begin{definition}
Let $(V,\pi)$ satisfy the condition ($\bf{Fin}$) and let $A$ be a perfect $V/\pi V$-algebra. Then the \textit{ring of ramified Witt vectors} of $A$ with respect to $(V,\pi)$ is defined to be the tensor product:
$$
\mathbf{W}(A) \otimes_{\mathbf{W}(\mathbb{F}_q)} V,
$$
which we denote by $\mathbf{W}_{\pi}(A)$.
\end{definition}

It defines a functor from the category of perfect $V/\pi V$-algebras to the category of $V$-algebras. The above definition is easy to work with to prove basic results on the ramified Witt vectors. The following proposition allows one to expand an element of the ramified Witt vectors as the $\pi$-adic representation as in the $p$-typical Witt vectors.

\begin{proposition}
\label{Serre}
Let $A$ be a ring equipped with a decreasing sequence of ideals $\fa_1 \supset \fa_2 \supset \cdots$ with the property that $\fa_n \cdot \fa_m \subset \fa_{m+n}$ and $A$ is complete and separated with respect to the topology defined by this sequence. Assume further that $K:=A/\fa_1$ is a perfect $\mathbb{F}_p$-algebra. Then there exists one and only one system of representatives $($also called a Teichm\"uller mapping$)$
$$
f_A:K \to A
$$
such that $f_A$ is multiplicative and the image $f_A(K) \subset A$ is the set of all elements of $A$ which admit $p^n$-th roots for all $n>0$. In particular, assume that $\fa_n=\pi^nA$ for all $n>0$ and a non-zero divisor $\pi \in A$. Then any element $x \in A$ admits the following presentation:
$$
x=\sum_{i=0}^{\infty}f_A(a_i)\pi^i
$$
in which the sequence $a_0,a_2,\ldots$ is uniquely determined.
\end{proposition}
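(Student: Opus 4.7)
The plan is the classical Teichm\"uller construction via limits of $p^n$-th powers, with the filtration $\{\fa_n\}$ playing the role of the $p$-adic topology. The cornerstone is a single $p$-power contraction estimate: if $x \equiv y \pmod{\fa_n}$ with $n \ge 1$, then $x^p \equiv y^p \pmod{\fa_{n+1}}$. Since $K$ is an $\mathbb{F}_p$-algebra we have $p \in \fa_1$; expanding
$$
x^p - y^p = \sum_{k=1}^{p}\binom{p}{k}\, y^{p-k}(x-y)^k,
$$
each term with $1 \le k \le p-1$ lies in $\fa_1 \cdot \fa_{kn} \subset \fa_{n+1}$ because $p \mid \binom{p}{k}$, and the $k=p$ term lies in $\fa_{pn} \subset \fa_{n+1}$ as $pn \ge n+1$. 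Iterating yields $x \equiv y \pmod{\fa_1} \Rightarrow x^{p^n} \equiv y^{p^n} \pmod{\fa_{n+1}}$ for all $n \ge 0$.

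With this in hand I would construct $f_A$ as follows. For $a \in K$, choose any lift $\tilde{a}_n \in A$ of $a^{1/p^n}$, which exists because $K$ is perfect, and set $f_A(a) := \lim_{n \to \infty} \tilde{a}_n^{p^n}$. The sequence is Cauchy: $(\tilde{a}_{n+1})^p$ and $\tilde{a}_n$ both reduce to $a^{1/p^n}$, hence agree modulo $\fa_1$, and raising to the $p^n$-th power yields agreement modulo $\fa_{n+1}$ by the contraction estimate. Completeness and separatedness produce a well-defined limit, and the same estimate shows independence of the chosen lifts. Multiplicativity $f_A(ab)=f_A(a)f_A(b)$ is then immediate by using $\tilde{a}_n\tilde{b}_n$ as a lift of $(ab)^{1/p^n}$ and passing to the limit.

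For the characterization of the image, observe that $f_A(a) = f_A(a^{1/p^n})^{p^n}$, so each $f_A(a)$ admits $p^n$-th roots for every $n$. Conversely, if $x \in A$ has $p^n$-th roots $x_n$ for all $n$, then $x_n \bmod \fa_1$ is a $p^n$-th root of $a := x \bmod \fa_1$, which by perfectness of $K$ must be $a^{1/p^n}$; hence $f_A(a) = \lim x_n^{p^n} = x$. For the uniqueness claim, suppose $x, y \in A$ each have all $p^n$-th roots and $x \equiv y \pmod{\fa_1}$. Their $p^n$-th roots $x_n,y_n$ agree mod $\fa_1$ by uniqueness of $p^n$-th roots in $K$, so $x = x_n^{p^n} \equiv y_n^{p^n} = y \pmod{\fa_{n+1}}$ for every $n$, and separatedness forces $x=y$. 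This pins down $f_A$ as the set-theoretic inverse of the reduction on elements admitting all $p^n$-th roots.

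Finally, for the $\pi$-adic expansion when $\fa_n = \pi^n A$: given $x \in A$, set $a_0 := x \bmod \pi A$ and write $x - f_A(a_0) = \pi x_1$ with $x_1 \in A$; define $a_1 := x_1 \bmod \pi A$ and iterate. The partial sums converge $\pi$-adically by completeness, giving $x = \sum_{i \ge 0} f_A(a_i) \pi^i$. Uniqueness: if two such expansions coincide and $j$ is the smallest index with $a_j \ne b_j$, cancelling and using that $\pi$ is a non-zero divisor yields $f_A(a_j) - f_A(b_j) \in \pi A$, and reducing modulo $\pi$ forces $a_j = b_j$, a contradiction. The one place that requires care is the contraction estimate itself, since it is precisely the interplay between $p \in \fa_1$ and the filtration axiom $\fa_m \cdot \fa_n \subset \fa_{m+n}$ that drives the entire argument; once it is firmly in hand, everything else is routine Cauchy-sequence bookkeeping plus the perfectness of $K$ to extract compatible $p^n$-th roots.
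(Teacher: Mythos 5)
Your proof is correct and follows essentially the same route as the paper: for the first assertion the paper simply cites Serre (\emph{Local Fields}, Ch.~II \S 4, Prop.~8), and your reconstruction is exactly that classical Teichm\"uller argument (the $p$-power contraction estimate $x\equiv y \pmod{\fa_n}\Rightarrow x^p\equiv y^p\pmod{\fa_{n+1}}$, the Cauchy sequence $\tilde a_n^{p^n}$, the image characterization via $p^n$-th roots, and separatedness for uniqueness). For the $\pi$-adic expansion your iterative extraction of the $a_i$ and the minimal-index cancellation for uniqueness coincide with the paper's own brief sketch.
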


\begin{proof}
The first part is found in \cite[Proposition 8 in Chapter II \S 4]{Se}. For the second part, fix an element $x \in A$. Then we may find $a_0 \in K$ such that $x-f_A(a_0) \equiv \pi A$. Then $x=f_A(a_0)+\pi x_1$ with $x_1 \in A$ and apply the same process to $x_1$. Eventually, since $A$ is $\pi$-adically complete and separated, the presentation
\begin{equation}
\label{series}
\sum_{i=0}^{\infty}f_A(a_i)\pi^i
\end{equation}
converges to $x$. And it is easy to see that the sequence $a_0,a_2,\ldots$ is uniquely determined. Conversely, an element of the form $(\ref{series})$ converges in $A$. 
\end{proof}

The next proposition is an extension of Proposition \ref{prop}, which simply says that the ring of ramified Witt-vectors gives a unique $\pi$-adic deformation of a perfect $V/\pi V$-algebra.

\begin{proposition}
\label{Serre2}
Let $A$ be a perfect $V/\pi V$-algebra. Then the ramified Witt vectors $\mathbf{W}_{\pi}(A)$ is the unique $\pi$-adically complete and separated $\pi$-torsion free $V$-algebra with an isomorphism 
$$
\mathbf{W}_{\pi}(A)/\pi \mathbf{W}_{\pi}(A) \cong A.
$$
\end{proposition}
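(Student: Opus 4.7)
The plan is to split the proof into two halves. First, I would verify that $\mathbf{W}_\pi(A)$ really does satisfy the listed properties, essentially by transferring the analogous facts from $\mathbf{W}(A)$ through the finite flat extension $\mathbf{W}(\mathbb{F}_q) \hookrightarrow V$. Concretely, since $V$ is a free $\mathbf{W}(\mathbb{F}_q)$-module of rank $f := [V:\mathbf{W}(\mathbb{F}_q)]$ with basis $1,\pi,\ldots,\pi^{f-1}$, the tensor product $\mathbf{W}_\pi(A) = \mathbf{W}(A) \otimes_{\mathbf{W}(\mathbb{F}_q)} V$ is free of rank $f$ over $\mathbf{W}(A)$ and therefore inherits $p$-torsion freeness and $p$-adic completeness from $\mathbf{W}(A)$. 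The total ramification hypothesis gives $pV = u\pi^f V$ for some unit $u \in V$, so the $p$-adic and $\pi$-adic topologies on $\mathbf{W}_\pi(A)$ are cofinal and the $\pi$-adic versions of completeness, separatedness, and torsion-freeness all transfer. The quotient then reads
\[
\mathbf{W}_\pi(A)/\pi \cong \mathbf{W}(A) \otimes_{\mathbf{W}(\mathbb{F}_q)} V/\pi V \cong \mathbf{W}(A) \otimes_{\mathbf{W}(\mathbb{F}_q)} \mathbb{F}_q \cong \mathbf{W}(A)/p \cong A.
\]

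For uniqueness, let $B$ be any $\pi$-adically complete and separated, $\pi$-torsion free $V$-algebra with $B/\pi B \cong A$. Because $V$ is free over $\mathbf{W}(\mathbb{F}_q)$, producing a $V$-algebra map $\phi:\mathbf{W}_\pi(A) \to B$ is equivalent to producing a $\mathbf{W}(\mathbb{F}_q)$-algebra map $\phi_0:\mathbf{W}(A) \to B$. The identity $pB = u\pi^f B$ renders $B$ automatically $p$-adically complete and $p$-torsion free, and the surjection $B/pB \twoheadrightarrow B/\pi B = A$ has kernel $\pi B/pB$ nilpotent of index at most $f$. Perfection of $A$ then permits building a ring-theoretic section $\sigma:A \to B/pB$ by the recipe
\[
\sigma(a) := y^{p^n} \bmod pB,
\]
where $y \in B/pB$ is any lift of $a^{1/p^n}$ and $n$ is chosen so that $p^n \geq f$; vanishing of the middle binomial coefficients $\binom{p^n}{k}$ modulo $p$ together with the nilpotence of the kernel make $\sigma$ independent of the choice of $y$ and force it to be simultaneously additive and multiplicative, and moreover force any ring-map section of $B/pB \to A$ to coincide with $\sigma$. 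A Teichm\"uller limit $\tau(a) := \lim_n y_n^{p^n}$ in $B$, with $y_n \in B$ any lift of $\sigma(a^{1/p^n})$, then yields a multiplicative lift $\tau:A \to B$ of $\sigma$, and $\phi_0\bigl(\sum [a_i] p^i\bigr) := \sum \tau(a_i) p^i$ produces the desired $\mathbf{W}(\mathbb{F}_q)$-algebra map (well-defined by $p$-adic completeness of $B$). Base changing along $\mathbf{W}(\mathbb{F}_q) \hookrightarrow V$ gives $\phi$, which reduces modulo $\pi$ to the identity on $A$; a standard d\'evissage using $\pi$-adic completeness and $\pi$-torsion freeness on both sides then upgrades this to a $V$-algebra isomorphism, and the uniqueness of $\sigma$ forces $\phi$ to be the unique such map.

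The main obstacle I anticipate lies in constructing the section $\sigma:A \to B/pB$ and verifying that it is a ring homomorphism. The quotient $B/pB$ is typically not itself perfect, so Proposition~\ref{prop} cannot be invoked directly on $B$; the construction hinges on the interaction between perfection of $A$ (to extract iterated $p^n$-th roots) and nilpotence of $\pi B/pB$ (to kill the dependence on the choice of lift in the limit $y^{p^n}$). Once $\sigma$ is in hand, the remainder is a formal consequence of the universal property of tensor products, the Teichm\"uller construction of Proposition~\ref{Serre}, and standard completeness d\'evissage.
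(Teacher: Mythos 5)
Most of your proposal checks out: the existence half (freeness of $V$ over $\mathbf{W}(\mathbb{F}_q)$ on $1,\pi,\ldots,\pi^{f-1}$, the relation $pV=u\pi^fV$ making the $p$-adic and $\pi$-adic topologies agree, and the identification $\mathbf{W}_{\pi}(A)/\pi\mathbf{W}_{\pi}(A)\cong \mathbf{W}(A)/p\mathbf{W}(A)\cong A$) is correct, your construction of the ring-theoretic section $\sigma\colon A\to B/pB$ using perfectness of $A$ and nilpotence of $\pi B/pB$ is correct, and the final surjectivity/injectivity d\'evissage is routine. Note that the paper offers no argument of its own here: it simply cites (\cite{CD}; Proposition 2.11), so any self-contained proof is necessarily "different from the paper's".

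The genuine gap is the assertion that $\phi_0\bigl(\sum [a_i]p^i\bigr):=\sum\tau(a_i)p^i$ "produces the desired $\mathbf{W}(\mathbb{F}_q)$-algebra map", with only well-definedness addressed. Well-definedness is indeed immediate from $p$-adic completeness, but additivity (and multiplicativity) of $\phi_0$ is not: multiplicativity of $\tau$ says nothing about how $\tau(a)+\tau(b)$ recombines with the Teichm\"uller expansion of $[a]+[b]=\sum[c_i]p^i$ in $\mathbf{W}(A)$, and verifying $\tau(a)+\tau(b)=\sum\tau(c_i)p^i$ in $B$ is precisely the content of Serre's Proposition 10 (Chapter II \S 5 of \cite{Se}), whose proof needs the universal integral polynomials governing sums and products of Teichm\"uller expansions (or, alternatively, a rigidity argument lifting $\sigma$ uniquely through the square-zero extensions $B/p^{n+1}B\to B/p^nB$ via vanishing of the cotangent complex of a perfect $\mathbb{F}_p$-algebra, as in \cite{Sch}). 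This step is not "a formal consequence of Proposition \ref{Serre}", which only supplies multiplicative representatives, and Proposition \ref{prop} cannot be used either since, as you note, $B/pB$ is not perfect; the same issue recurs when you claim $\phi_0$ restricts on $\mathbf{W}(\mathbb{F}_q)$ to the structure map $\mathbf{W}(\mathbb{F}_q)\to V\to B$. The cleanest repair also shows your detour through $B/pB$ is unnecessary: since $B$ is $\pi$-adically complete and separated with $B/\pi B\cong A$ perfect, $B$ with the filtration $\mathfrak{a}_n=\pi^nB$ is a $p$-ring in Serre's sense, so Serre's Proposition 10 applied to the strict $p$-ring $\mathbf{W}(A)$ and $h=\mathrm{id}_A$ gives directly the unique ring homomorphism $\mathbf{W}(A)\to B$ lifting the identity; its uniqueness clause identifies its restriction to $\mathbf{W}(\mathbb{F}_q)$ with the structure map, whence a $V$-algebra map $\mathbf{W}_{\pi}(A)=\mathbf{W}(A)\otimes_{\mathbf{W}(\mathbb{F}_q)}V\to B$, and your d\'evissage then upgrades it to an isomorphism.
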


\begin{proof}
This is found in \cite[Proposition 2.11]{CD}.
\end{proof}

The next proposition describes the detailed structure of ramified Witt vectors.

\begin{proposition}
\label{almostCM}
With notation just as above, let $A$ be a perfect $V/\pi V$-algebra. Then we have the following assertions:

\begin{enumerate}
\item[$\mathrm{(i)}$]
The $V$-algebra $\mathbf{W}_{\pi}(A)$ is a finite free $\mathbf{W}(A)$-module whose free basis is given by $1,\pi,\ldots,\pi^{f-1}$ with $f=[V:\mathbf{W}(\mathbb{F}_q)]$.

\item[$\mathrm{(ii)}$]
There is a unique $V$-algebra isomorphism:
$$
\mathbf{F}_{\pi}:\mathbf{W}_{\pi}(A) \to \mathbf{W}_{\pi}(A)
$$
such that the induced map $\overline{\mathbf{F}}_{\pi}:\mathbf{W}_{\pi}(A)/\pi \mathbf{W}_{\pi}(A) \to \mathbf{W}_{\pi}(A)/\pi \mathbf{W}_{\pi}(A)$ coincides with the $e$-th iterated Frobenius automorphism.

\item[$\mathrm{(iii)}$]
If $A$ is a completely integrally closed domain, then $\mathbf{W}_{\pi}(A)$ is an integrally closed domain.
\end{enumerate}
\end{proposition}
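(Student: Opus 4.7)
The plan is to tackle the three parts of Proposition \ref{almostCM} in order, since they build on each other.

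For part (i), I would exploit the fact that $\mathbf{W}(\mathbb{F}_q) \to V$ is totally ramified of degree $f$, so that $\pi$ satisfies an Eisenstein polynomial of degree $f$ over $\mathbf{W}(\mathbb{F}_q)$ and $V$ is free of rank $f$ over $\mathbf{W}(\mathbb{F}_q)$ on the basis $1,\pi,\ldots,\pi^{f-1}$. Base change along $\mathbf{W}(\mathbb{F}_q) \to \mathbf{W}(A)$ preserves this free basis, yielding the claim.

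For part (ii), the strategy is to construct $\mathbf{F}_\pi$ explicitly as $\mathbf{F}^e \otimes_{\mathbf{W}(\mathbb{F}_q)} \id_V$. The main point to verify is that the $e$-fold iterated Witt-Frobenius $\mathbf{F}^e:\mathbf{W}(A)\to\mathbf{W}(A)$ restricts to the identity on $\mathbf{W}(\mathbb{F}_q)$: modulo $p$ it is the $e$-fold Frobenius on $\mathbb{F}_q$, which is the identity by the choice of $q$, and then the uniqueness of morphism lifts in Proposition \ref{prop} forces $\mathbf{F}^e|_{\mathbf{W}(\mathbb{F}_q)} = \id$. Since $A$ is perfect, $\mathbf{F}^e$ is an automorphism, and the resulting tensor map $\mathbf{F}_\pi$ is a $V$-algebra automorphism of $\mathbf{W}_\pi(A)$; its mod-$\pi$ reduction is the $q$-th power map on $A$ under the identification $\mathbf{W}_\pi(A)/\pi\mathbf{W}_\pi(A) \cong A$. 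Uniqueness of $\mathbf{F}_\pi$ follows from the morphism-level companion of Proposition \ref{Serre2}: any $V$-algebra endomorphism lifting a given mod-$\pi$ endomorphism is uniquely determined.

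For part (iii), where the substantive work lies, I would first use the Teichm\"uller expansion of Proposition \ref{Serre} to equip every nonzero $x = \sum_i f_A(a_i)\pi^i \in \mathbf{W}_\pi(A)$ with the order $v(x) := \min\{i : a_i \neq 0\}$, finite by $\pi$-adic separatedness. Since $A$ is a domain, the lowest Teichm\"uller coefficient of $xy$ is $f_A(a_{v(x)}b_{v(y)}) \neq 0$, so $v(xy) = v(x) + v(y)$, which both shows $\mathbf{W}_\pi(A)$ is a domain and exhibits $v$ as a discrete valuation extending uniquely to $K := \Frac(\mathbf{W}_\pi(A))$; its valuation ring $\mathcal{O}$ has residue field $\Frac(A)$. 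For integral closure I would prove the stronger statement that $\mathbf{W}_\pi(A)$ is completely integrally closed. Given $\alpha \in K$ with $c\alpha^n \in \mathbf{W}_\pi(A)$ for all $n \geq 0$ and some nonzero $c$, a Newton-polygon estimate forces $v(\alpha) \geq 0$. Using the identification $\pi^m \mathcal{O} \cap \mathbf{W}_\pi(A) = \pi^m \mathbf{W}_\pi(A)$ (immediate from Teichm\"uller expansions), I may replace $c$ by $c\pi^{-v(c)}$ and assume $v(c)=0$. Reducing modulo $\pi\mathcal{O}$ gives $\bar c \cdot \bar\alpha^n \in A$ for all $n$ in $\Frac(A)$ with $\bar c \neq 0$; complete integral closedness of $A$ then yields $\bar\alpha = a_0 \in A$. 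Setting $\alpha_1 := (\alpha - [a_0])/\pi \in K$, the binomial expansion together with $v(\alpha - [a_0]) \geq 1$ propagates the hypothesis $c\alpha_1^n \in \mathbf{W}_\pi(A)$ for all $n$. Iterating produces a Teichm\"uller series $\sum [a_i]\pi^i$ which converges in $\mathbf{W}_\pi(A)$ by $\pi$-adic completeness and equals $\alpha$ by $\pi$-adic separatedness.

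The principal obstacle is the inductive step in (iii): preserving almost-integrality under the substitution $\alpha \rightsquigarrow (\alpha - [a_0])/\pi$ and ensuring that each successive residue lands in $A$ rather than merely in $\Frac(A)$ is precisely where the complete integral closedness of $A$ is unavoidable.
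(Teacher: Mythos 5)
Your treatment of (i) and (ii) is essentially identical to the paper's: in (i) the paper simply says the free-basis claim is evident from the construction, and in (ii) the paper defines $\mathbf{F}_{\pi}(r\otimes s) = \mathbf{F}^e(r)\otimes s$, noting $\mathbf{F}^e$ is trivial on $\mathbf{W}(\mathbb{F}_q)$ precisely because the residue field is finite of order $q=p^e$, which is your observation.

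For (iii) your route is genuinely different. The paper does \emph{not} argue via Teichm\"uller expansions and valuations; instead it quotes two results from the author's earlier work \cite{Shim3} — that for a completely integrally closed perfect $\mathbb{F}_p$-domain $A$, $\mathbf{W}(A)$ is an integrally closed domain, and that $\mathbf{W}(A)_{(p)}$ is an unramified discrete valuation ring. From these, linear disjointness of $\Frac(\mathbf{W}(A))$ and $\Frac(V)$ over $\Frac(\mathbf{W}(\mathbb{F}_q))$ shows $\mathbf{W}_{\pi}(A) \cong \mathbf{W}(A)[\pi]$ is a domain; the finite \'etale map $\mathbf{W}(A)[\frac{1}{p}] \to \mathbf{W}_{\pi}(A)[\frac{1}{p}]$ gives normality after inverting $\pi$; and Corollary~\ref{cor}(i) (via Lemma~\ref{Koszul}) glues this with normality at $(\pi)$ to conclude. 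Your proof instead works directly with the order function $v$ attached to the Teichm\"uller expansion of Proposition~\ref{Serre}, establishes that $v$ is multiplicative (so $\mathbf{W}_{\pi}(A)$ is a domain), and then runs a careful almost-integrality induction: the key steps — $v(\alpha)\geq 0$ from $v(c) + nv(\alpha)\geq 0$, normalizing $v(c)=0$ via $\{x : v(x)\geq m\}=\pi^m\mathbf{W}_{\pi}(A)$, reduction modulo $\pi$ onto $A\subset\Frac(A)$, and the propagation $c(\alpha-[a_0])^n \in \pi^n\mathbf{W}_{\pi}(A) \Rightarrow c\alpha_1^n\in\mathbf{W}_{\pi}(A)$ — all check out. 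Your argument buys two things the paper's does not: it is self-contained (no appeal to \cite{Shim3}, and applied with $V=\mathbf{W}(\mathbb{F}_q)$ it reproves the cited $p$-typical result), and it proves the stronger conclusion that $\mathbf{W}_{\pi}(A)$ is \emph{completely} integrally closed, not merely integrally closed. The paper's approach, by contrast, is shorter once the \'etale/Koszul machinery of \S\ref{sec3} is in place, and isolates the mixed-characteristic content into the single \'etale localization step. Both are valid.
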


\begin{proof}
(i): This is quite evident from the construction.

(ii): We may define the map $\mathbf{F}_{\pi}$ by setting
$$
\mathbf{F}_{\pi}(r \otimes s):=\mathbf{F}^e(r) \otimes s
$$
for $r \in \mathbf{W}(A)$ and $s \in V$. This is well-defined since $\mathbf{F}^e$ acts trivially on $\mathbf{W}(\mathbb{F}_q)$ (this is exactly where we need that the residue field is finite). Alternately, we may apply Proposition \ref{Serre}. Namely, for an element $x=\sum_{i=0}^{\infty}f_A(a_i)\pi^i \in \mathbf{W}_{\pi}(A)$, we define
$$
\mathbf{F}_{\pi}(x)=\sum_{i=0}^{\infty}f_A(\Frob^e(a_i))\pi^i.
$$

(iii): Under the stated assumption, $\mathbf{W}(A)$ is an integrally closed domain by \cite[Corollary 5.9]{Shim3}. Let $\mathbf{W}(A)_{(p)}$ be the localization of $\mathbf{W}(A)$ at the prime ideal $(p)$. Then by \cite[Proposition 5.2]{Shim3}, $\mathbf{W}(A)_{(p)}$ is an unramified discrete valuation ring. Since $V$ is totally ramified over $\mathbf{W}(\mathbb{F}_q)$, it follows that $\Frac(\mathbf{W}(A))$ and $\Frac(V)$ are linearly disjoint over $\Frac(\mathbf{W}(\mathbb{F}_q))$, in which all relevant rings are considered as sub-algebras of $\Frac(\mathbf{W}(A)^+)$, where $\mathbf{W}(A)^+$ is the absolute integral closure of $\mathbf{W}(A)$. Since $\mathbf{W}(\mathbb{F}_q) \to V$ is flat, there is an injection:
$$
\mathbf{W}(A) \otimes_{\mathbf{W}(\mathbb{F}_q)}V \hookrightarrow \Frac(\mathbf{W}(A)) \otimes_{\mathbf{W}(\mathbb{F}_q)}V,
$$
and the target ring is a field by linearly disjoint property. Then there is an isomorphism of domains: $\mathbf{W}_{\pi}(A)=\mathbf{W}(A) \otimes_{\mathbf{W}(\mathbb{F}_q)} V \cong \mathbf{W}(A)[\pi]$. Since 
$$
\mathbf{W}(A)[\frac{1}{p}] \to \mathbf{W}(A)[\pi][\frac{1}{p}]
$$ 
is finite \'etale, it follows that $\mathbf{W}_{\pi}(A)[\frac{1}{p}]=\mathbf{W}_{\pi}(A)[\frac{1}{\pi}]$ is an integrally closed domain. On the other hand, $(\pi)$ is a principal prime ideal of $\mathbf{W}_{\pi}(A)$ and the localization $\mathbf{W}_{\pi}(A)_{(\pi)}$ is a discrete valuation ring. Applying Lemma \ref{cor}, the integral domain $\mathbf{W}_{\pi}(A)$ is integrally closed in $\mathbf{W}_{\pi}(A)[\frac{1}{\pi}]$. Hence $\mathbf{W}_{\pi}(A)$ is an integrally closed domain.
\end{proof}

We will call $\mathbf{F}_{\pi}$ the \textit{q-Witt-Frobenius map}. We will use the following lemma to embed a complete local domain into the ring of ramified Witt vectors.

\begin{lemma}
\label{embed}
Let $A$ be an algebra which is flat over $(V,\pi)$ and assume that $A/\pi A$ is a perfect $V/\pi V$-algebra and $A$ is $\pi$-adically separated. Then there is a canonical injection:
$$
A \hookrightarrow \mathbf{W}_{\pi}(A/\pi A),
$$
which lifts the identity map on $A/\pi A$ and which sends $\pi \in A$ to $\pi \in \mathbf{W}_{\pi}(A/\pi A)$.
\end{lemma}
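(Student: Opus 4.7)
The plan is to produce the map as the composition $A \to \widehat{A} \xrightarrow{\sim} \mathbf{W}_{\pi}(A/\pi A)$, where $\widehat{A} := \varprojlim_n A/\pi^n A$ is the $\pi$-adic completion of $A$, and the isomorphism on the right is supplied by the uniqueness clause of Proposition \ref{Serre2}. The injectivity of the first arrow is exactly the $\pi$-adic separation hypothesis, so the whole composition is automatically an injection.

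First I would verify that $\widehat{A}$ qualifies as a candidate in Proposition \ref{Serre2}: it should be a $\pi$-adically complete and separated, $\pi$-torsion free $V$-algebra with $\widehat{A}/\pi \widehat{A} \cong A/\pi A$. Completeness and separation are formal, and since $\widehat{A}/\pi^n \widehat{A} \cong A/\pi^n A$ for every $n \geq 1$, taking $n=1$ gives the reduction mod $\pi$. For $\pi$-torsion freeness, suppose $x = (x_n)_n \in \widehat{A}$ satisfies $\pi x = 0$. Choosing any lift $\widetilde{x}_{n+1} \in A$ of $x_{n+1}$, we obtain $\pi \widetilde{x}_{n+1} \in \pi^{n+1} A$; since $A$ is flat over $V$, hence $\pi$-torsion free, cancellation gives $\widetilde{x}_{n+1} \in \pi^n A$, whence $x_n = 0$ in $A/\pi^n A$. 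As this holds for all $n$, we conclude $x = 0$.

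Next, Proposition \ref{Serre2} yields a unique $V$-algebra isomorphism $\widehat{A} \xrightarrow{\sim} \mathbf{W}_{\pi}(A/\pi A)$ inducing the identity on $A/\pi A$. Composing with the canonical $V$-algebra map $A \to \widehat{A}$ yields the desired homomorphism. Its kernel is $\bigcap_n \pi^n A$, which is zero by the $\pi$-adic separation assumption, so the composite is injective. Since every map in the composition is a $V$-algebra homomorphism, the image of $\pi \in A$ goes to $\pi \in \mathbf{W}_{\pi}(A/\pi A)$, and the construction lifts the identity on $A/\pi A$ by design; canonicity is inherited from the uniqueness of the isomorphism produced by Proposition \ref{Serre2}.

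I do not foresee a real obstacle: the whole argument comes down to recognizing $\mathbf{W}_{\pi}(A/\pi A)$ as the $\pi$-adic completion of $A$ via the uniqueness of $\pi$-adic deformations. The only mildly delicate point is preservation of $\pi$-torsion freeness under completion in the possibly non-Noetherian setting of the lemma, which is handled by the elementary inverse-limit calculation sketched above.
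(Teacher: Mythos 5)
Your proof is correct and follows essentially the same route as the paper: identify $\mathbf{W}_{\pi}(A/\pi A)$ with the $\pi$-adic completion $A^{\wedge}$ via the uniqueness clause of Proposition \ref{Serre2}, and embed $A$ into $A^{\wedge}$ using $\pi$-adic separatedness. The only difference is that the paper outsources the $\pi$-torsion-freeness of $A^{\wedge}$ (and the preservation of perfectness mod $\pi$) to a citation of an earlier paper by the author, whereas you verify it directly by the inverse-limit computation, which is a perfectly good substitute.
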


\begin{proof}
By \cite[Lemma 4.2]{Shim3}, the $\pi$-adic completion of $A$, denoted by $A^{\wedge}$, will be a $\pi$-torsion free, $\pi$-adically complete ring such that $A^{\wedge}/\pi A^{\wedge}$ is a perfect $V/\pi V$-algebra. Hence we have a canonical identification:
$$
A^{\wedge}=\mathbf{W}_{\pi}(A/\pi A)
$$
into which $A$ is embedded by the assumption that $A$ is $\pi$-adically separated. Hence we get the canonical injection.
\end{proof}

\subsection{Big Cohen-Macaulay algebra}
We give a simple application of the ramified Witt vectors to the construction of big Cohen-Macaulay algebras. Let $(R,\fm)$ be a local Noetherian ring. An $R$-algebra $T$ is a \textit{big Cohen-Macaulay algebra}, if there is a system of parameters of $R$ that is a regular sequence on $T$ and $\fm T \ne T$. Recall that $A^+$ is a big Cohen-Macaulay algebra for a complete local domain $A$ of characteristic $p>0$. In other words, every system of parameters of $A$ is a regular sequence on $A^+$. This is the main result of \cite{HH92}. The following proposition does not seem to be found in any research article.

\begin{proposition}
\label{BigMac}
Assume that $(S,\fm)$ is a complete local domain which is flat over $(V,\pi)$. Assume that there is a sub-algebra $S \subseteq T \subseteq S^+$ such that $T/\pi T$ is a perfect $\mathbb{F}_p$-algebra. Then $S$ maps to a big Cohen-Macaulay algebra.
\end{proposition}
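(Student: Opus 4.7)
My plan is to set $B:=\mathbf{W}_\pi(T/\pi T)$ and verify that $B$ is a big Cohen--Macaulay $S$-algebra. The ring $T$, as a subring of the domain $S^+$ with $\pi\neq 0$, is $\pi$-torsion free, hence $V$-flat; together with the hypothesis that $T/\pi T$ is perfect, the proof of Lemma \ref{embed} yields a canonical identification $T^\wedge\cong\mathbf{W}_\pi(T/\pi T)=B$ of the $\pi$-adic completion, and in particular a ring map $T\to B$ lifting the identity on $T/\pi T$ and sending $\pi$ to $\pi$. Composing with $S\hookrightarrow T$ endows $B$ with an $S$-algebra structure.

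Next, fix a system of parameters $\pi, x_2,\ldots, x_d$ of $S$ whose mod-$\pi$ images $\bar x_2,\ldots,\bar x_d$ form a system of parameters of $S/\pi S$; this is possible because $\dim(S/\pi S)=d-1$ by flatness of $S$ over $(V,\pi)$. Lying-over for the integral extension $S\subseteq T$ produces a maximal ideal of $T$ above $\fm$ containing $\pi$ and every $x_i$, so $(\bar x_2,\ldots,\bar x_d)(T/\pi T)$ is a proper ideal of $T/\pi T=B/\pi B$; since $\pi$ sits in the Jacobson radical of the $\pi$-adically complete ring $B$, Nakayama's lemma gives $(\pi, x_2,\ldots, x_d)B\subsetneq B$.

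Proposition \ref{Serre2} gives that $\pi$ is a nonzerodivisor on $B$. It therefore remains to show that $\bar x_2,\ldots,\bar x_d$ form a regular sequence on $B/\pi B=T/\pi T$. The plan is to invoke Hochster--Huneke: for every minimal prime $\fq$ of $\pi S$, $(S/\fq)^+$ is a big Cohen--Macaulay $S/\fq$-algebra on which $\bar x_2,\ldots,\bar x_d$ is a regular sequence. Perfectness of $T/\pi T$ forces it to be reduced, so $\pi T$ is radical and $T/\pi T$ injects into $\prod_{\fQ_T} T/\fQ_T$ over the minimal primes $\fQ_T$ of $\pi T$. After replacing $S$ by its normalization, going-down implies each such $\fQ_T$ contracts to a minimal prime $\fq$ of $\pi S$, and $T/\fQ_T$ embeds as an integral sub-extension of $S/\fq$ inside $(S/\fq)^+$. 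Combining, $T/\pi T$ sits inside $\prod_{\fQ_T}(S/\fq)^+$, on which $\bar x_2,\ldots,\bar x_d$ is a regular sequence.

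The main obstacle is descending this regularity from the ambient product to the subring $T/\pi T$. Regularity of $\bar x_2$ is immediate from the injection. The inductive step, showing that $\bar x_{k+1}\bar y\in(\bar x_2,\ldots,\bar x_k)(T/\pi T)$ already implies $\bar y\in(\bar x_2,\ldots,\bar x_k)(T/\pi T)$, is delicate because moding out by earlier $\bar x_i$ need not preserve the embedding into the product. Here I would exploit the perfectness of $T/\pi T$ to extract $p^n$-th roots of a putative witness $\bar y$, and use the regular sequence property on each $(S/\fq)^+$ together with the reducedness produced by these $p^n$-th roots to force $\bar y$ into the desired ideal of $T/\pi T$. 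Carrying out this Frobenius-descent argument cleanly, and verifying the going-down step for the minimal primes, is the principal technical difficulty of the proof.
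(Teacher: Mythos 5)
There is a genuine gap. Your whole strategy rests on showing that $\bar x_2,\ldots,\bar x_d$ is a regular sequence on $T/\pi T$ itself, i.e. that $B=\mathbf{W}_{\pi}(T/\pi T)$ is already a big Cohen--Macaulay $S$-algebra, and the one step that would prove this — descending regularity from $\prod_{\fQ_T}(S/\fq)^+$ to the subring $T/\pi T$ — is exactly the step you leave unproven. Regular sequences do not pass to subrings: from $\bar x_{k+1}\bar y\in(\bar x_2,\ldots,\bar x_k)\prod(S/\fq)^+$ one only gets $\bar y\in(\bar x_2,\ldots,\bar x_k)\prod(S/\fq)^+$, and intersecting back with $T/\pi T$ gives no control over membership in the ideal generated inside $T/\pi T$. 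The sketched ``Frobenius-descent'' via $p^n$-th roots is not an argument; perfectness of $T/\pi T$ gives reducedness and radicality of $\pi T$, but there is no known mechanism by which it forces colon-capturing of the ideals $(\bar x_2,\ldots,\bar x_k)$ in the subring, and the paper itself emphasizes that such finer properties of $T/\pi T$ are subtle. So as written the proposal proves a stronger statement than needed and stalls precisely at its core.

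The fix is to not insist that $\mathbf{W}_{\pi}(T/\pi T)$ itself be big Cohen--Macaulay: the proposition only asks that $S$ map to \emph{some} big Cohen--Macaulay algebra. The paper's proof composes further. Choose a prime $\mathcal{P}$ of $T$ minimal over $\pi T$; since $T/\pi T$ is perfect and $\mathcal{P}/\pi T$ is a (minimal) prime, the domain $T/\mathcal{P}$ is again perfect, hence so is its absolute integral closure $(T/\mathcal{P})^+$, and by Hochster--Huneke $(T/\mathcal{P})^+$ is a balanced big Cohen--Macaulay algebra for the complete local characteristic-$p$ domain $S/(\mathcal{P}\cap S)$, so every system of parameters of $S/\pi S$ is a regular sequence on it. Functoriality of the (ramified) Witt construction then gives
$$
S \to T^{\wedge} \cong \mathbf{W}_{\pi}(T/\pi T) \to \mathbf{W}_{\pi}\bigl((T/\mathcal{P})^+\bigr),
$$
and since the last ring is $\pi$-torsion free with quotient $(T/\mathcal{P})^+$ modulo $\pi$ (Proposition \ref{Serre2}), the sequence $\pi,x_2,\ldots,x_d$ is regular on it and generates a proper ideal; this is the desired big Cohen--Macaulay $S$-algebra. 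Your first paragraph (the identification $T^{\wedge}\cong\mathbf{W}_{\pi}(T/\pi T)$ and the $S$-algebra structure) and your choice of a system of parameters starting with $\pi$ are fine and agree with the paper; the missing idea is to pass to $T/\mathcal{P}$ and its absolute integral closure before taking Witt vectors, which removes the need for any descent of regular sequences.
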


\begin{proof}
Let $T^{\wedge}$ denote the $\pi$-adic completion of $T$. Then $T^{\wedge}$ is $\pi$-torsion free in view of \cite[Lemma 4.2]{Shim3} and it is the unique $\pi$-adic deformation of the perfect $\mathbb{F}_p$-algebra $T/\pi T$. Hence $\mathbf{W}_{\pi}(T/\pi T) \cong T^{\wedge}$. Note that $S/\pi S$ is a complete local ring of characteristic $p>0$ and the induced map $S/\pi S \to T/\pi T$ is an integral extension and $T/\pi T$ maps to a balanced big Cohen-Macaulay perfect $\mathbb{F}_p$-algebra which is constructed as follows: Choose a prime ideal $\mathcal{P}$ of $T$ that is minimal over $\pi T$. Now every system of parameters of $S/\pi S$ is a regular sequence on the absolute integral closure $(T/\mathcal{P})^+$ of $T/\mathcal{P}$ by \cite{HH92} and the composite ring map
$$
S \to T^{\wedge} \cong \mathbf{W}_{\pi}(T/\pi T) \to \mathbf{W}_{\pi}((T/\mathcal{P})^+)
$$
gives a big Cohen-Macaulay $S$-algebra.
\end{proof}

In general, if $T$ is an algebra with $S \subset T \subset S^+$, the principal ideal $\pi T$ is easily checked to be radical in certain cases, while the surjectivity of the Frobenius on $T/\pi T$ is quite subtle.

\section{Cohen-Gabber theorem}
\label{sec5}

Gabber recently proved an improved version of Cohen's structure theorem for complete local rings. One aspect is that it gives a control of ramification of a module-finite extension from a complete regular local ring. The original statement is in a more general form, but we state it in a form that suffices for our purpose. We discuss only the mixed characteristic case (see \cite[Theorem 3.5]{Illusie} or \cite[Th\'eor\`eme 4.2.2 at page 65]{Book} for a precise statement and its proof). For a field $k$ of characteristic $p>0$, let $\mathrm{C}(k)$ denote a unique unramifed complete discrete valuation ring with residue field $k$.

\begin{theorem}[Gabber]
\label{gabber}
Assume that $(A,\fm)$ is a complete local normal domain of dimension $d \ge 2$ and mixed characteristic $p>0$, with residue field $k$. Then there exists a torsion free module-finite extension $A \hookrightarrow B$ such that $B$ is a normal domain with residue field $k'$, together with a totally ramified extension of complete discrete valuation rings $\mathrm{C}(k') \to V$, a system of parameters $(p,t_2,\ldots,t_d)$ of $B$, and a torsion free module-finite extension
$$
V[[t_2,\ldots,t_d]] \to B,
$$
such that $V[[t_2,\ldots,t_d]][\frac{1}{a}] \to B[\frac{1}{a}]$ is \'etale for some $a \in V[[t_2,\ldots,t_d]] \setminus \pi V[[t_2,\ldots,t_d]]$, where $\pi$ is a unifomizing parameter $\pi$ of $V$. 
\end{theorem}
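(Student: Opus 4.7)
The plan is to refine the classical Cohen structure theorem by eliminating the wild ramification along the prime $(p)$ after passing to a well-chosen finite extension, the main tool being a mixed-characteristic Abhyankar/Epp-type theorem on elimination of ramification for complete discrete valuation rings. The starting point is Cohen's structure theorem itself: since $(A,\fm)$ is complete local of dimension $d$ with perfect residue field $k$ of characteristic $p$, it contains a coefficient ring $\mathbf{W}(k)$, and there is a module-finite, torsion free extension $R_0 := \mathbf{W}(k)[[x_2,\ldots,x_d]] \hookrightarrow A$ for which $(p,x_2,\ldots,x_d)$ is a system of parameters of $A$. This gives the finiteness but says nothing about ramification along $(p)$.

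Next I would analyze the behavior of $R_0 \hookrightarrow A$ at the height-one prime $(p)\subset R_0$. The localization $(R_0)_{(p)}$ is a DVR with uniformizer $p$, and $A\otimes_{R_0}(R_0)_{(p)}$ is a semilocal ring whose localizations at its maximal ideals are again DVRs, since $A$ is a normal domain and the minimal primes of $pA$ have height one. To each of these DVR extensions one applies Epp's theorem for mixed-characteristic complete DVRs: there exists a finite totally ramified extension $\mathbf{W}(k)\hookrightarrow V$ of complete DVRs, with residue field $k'/k$ a finite extension, such that after base change to $V$ (and after absorbing a finite unramified extension producing $k'$) the extension becomes unramified and the residue field extensions become separable. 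Translating this back to the global picture, one obtains a finite extension $C_0$ of $A$ inside $A^+$ containing $V$ and all the needed residue representatives, so that the composite $V\otimes_{\mathbf{W}(k)}R_0 \to C_0$ is \'etale after localization at the generic point of $(\pi)$, where $\pi$ is a uniformizer of $V$.

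The next step is to globalize from the DVR-level statement to a statement over a full regular local subring. Using Weierstrass preparation relative to the new base $V$ together with a Noether-normalization-type coordinate change, I would choose new parameters $t_2,\ldots,t_d$ in $C_0$ so that $(\pi,t_2,\ldots,t_d)$ is a regular system of parameters of a regular local subring $V[[t_2,\ldots,t_d]]\subset C_0$ and so that $(p,t_2,\ldots,t_d)$ is a system of parameters of $C_0$; this is possible because the $x_i$ already give a system of parameters of $A$ and the coordinate adjustment needed to accommodate the totally ramified extension $\mathbf{W}(k)\to V$ is finite. Finally, I would take $C$ to be the normalization of $C_0$, which remains module-finite over the excellent normal ring $V[[t_2,\ldots,t_d]]$ and still has $(p,t_2,\ldots,t_d)$ as a system of parameters. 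By construction, $V[[t_2,\ldots,t_d]]\hookrightarrow C$ is torsion free, module-finite, and \'etale after localization at the principal prime ideal $(\pi)$.

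The main obstacle is the globalization in the third step. Epp's theorem is intrinsically a statement about a single complete DVR extension, while the conclusion of the theorem requires a simultaneous statement over the regular local ring $V[[t_2,\ldots,t_d]]$: the elimination of ramification at the generic point of $(\pi)$ must be compatible with a genuine module-finite inclusion of a regular local ring (not only with a local isomorphism at the generic point of $(p)$), and with the normality of $C$ and its system of parameters. The delicate issue is choosing the parameters $t_2,\ldots,t_d$ compatibly with the totally ramified base change $\mathbf{W}(k)\to V$ and with the roots of the uniformizer that were adjoined to kill ramification, so that the Weierstrass-type structure of $C_0$ over $V[[t_2,\ldots,t_d]]$ survives and so that module-finiteness is preserved globally. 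This careful interleaving of parameter choice with ramification absorption is precisely the content of Gabber's refinement of Cohen's theorem.
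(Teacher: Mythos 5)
The paper does not prove Theorem~\ref{gabber}; it is stated as a black-box citation to Illusie's notes on Gabber's refined uniformization and to the Ast\'erisque volume of Illusie--Laszlo--Orgogozo. There is therefore no in-paper proof for your argument to be compared against, and the intended reading is that this result is an external input.

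Evaluated on its own terms, your sketch correctly identifies the standard circle of ideas behind the Cohen--Gabber theorem: start from Cohen's structure theorem to get a module-finite inclusion $\mathbf{W}(k)[[x_2,\ldots,x_d]]\hookrightarrow A$, localize at the height-one prime $(p)$ to reduce to a finite extension of complete discrete valuation rings, and invoke Epp's theorem on elimination of wild ramification to obtain the totally ramified base change $\mathbf{W}(k)\to V$. That much is sound and is indeed the right tool. However, the argument as written has a genuine gap, and you in fact name it yourself: the passage from the DVR-level conclusion (étaleness at the generic point of $(\pi)$) back to a simultaneous statement over a full regular local subring $V[[t_2,\ldots,t_d]]$, with a compatible choice of parameters and preservation of module-finiteness and normality, is asserted rather than carried out. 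Saying that this ``careful interleaving of parameter choice with ramification absorption is precisely the content of Gabber's refinement'' reduces the theorem to itself. Concretely, what is missing is the verification that after absorbing the Epp extension one can choose $t_2,\ldots,t_d$ so that $V[[t_2,\ldots,t_d]]\hookrightarrow C$ is module-finite \emph{and} the locus of non-étaleness in $\Spec V[[t_2,\ldots,t_d]]$ misses the prime $(\pi)$ entirely, not merely its generic point before the base change; this requires controlling the branch locus and is where the substantive work of Gabber's argument lies. As submitted, this is a correct high-level outline, not a proof.
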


The residue field $k'$ of $V$ is, in general, a non-trivial extension of $k$.

\begin{example}
Let $R:=\mathbf{W}(k)[[x_2,\ldots,x_d]] \to S$ be a module-finite extension of normal domains such that $R[\frac{1}{pa}] \to S[\frac{1}{pa}]$ is \'etale with $\Ht(p,a)=2$. Consider the following question:
\begin{enumerate}
\item[$\bullet$]
Is there a factorization $R \xrightarrow{i} T \xrightarrow{j} S$ such that $T$ is normal and both $i \otimes R[\frac{1}{a}]$ (resp. $i \otimes R[\frac{1}{p}]$) and $j \otimes R[\frac{1}{p}]$ (resp. $j \otimes R[\frac{1}{a}]$) are \'etale? 
\end{enumerate}
However, this is not always the case and the following example illustrates a reason of the utility of Cohen-Gabber theorem in the proof of our main theorems. Fix an algebraically closed field $k$ of characteristic $p>2$ together with a module-finite extension of normal domains:
$$
R=\mathbf{W}(k)[[x]] \to S=\mathbf{W}(k)[[x]][t]/(t^2-px).
$$
By an easy calculation, $R[\frac{1}{px}] \to S[\frac{1}{px}]$ is an \'etale extension of degree 2, and there is no proper intermediate normal ring between $R$ and $S$. Let $T$ be the normalization of $(S \otimes_{\mathbf{W}(k)}V)_{\red}$, where $V=\mathbf{W}(k)[s]/(s^2-p)$ is a ramified discrete valuation ring with a maximal ideal $\pi V$. We claim that $T/\pi T$ is reduced. Localizing the module-finite map $V[[x]] \to T$ at the height-one prime $(\pi)$ of $V[[x]]$, we get a module-finite map
$$
V[[x]]_{(\pi)} \to T_{(\pi)}
$$
and $\pi T_{(\pi)}$ is radical. Thus, $\pi T$ is a radical ideal in view of Lemma \ref{cor}.
\end{example}

\section{Basic set-up}
\label{sec6}

\subsection{Basic ring}
We fix some notation. Let $\mathbb{F}_q$ be a \textit{finite} field consisting of $q=p^e$ elements and denote by $\mathbf{W}(\mathbb{F}_q)$ the ring of Witt vectors and let $(V,\pi,\mathbb{F}_q)$ be a complete discrete valuation ring of mixed characteristic $p>0$. That is, we continue to assume ($\bf{Fin}$) throughout this section. Assume that $R=V[[x_2,\ldots,x_d]] \to S$ is a module-finite extension of local normal domains such that 
$$
R_{(\pi)} \to S_{(\pi)}:=S \otimes_R R_{(\pi)}
$$ 
is finite \'etale, where $R_{(\pi)}$ is the discrete valuation ring which is the localization of $R$ at the principal prime ideal $\pi R$. Under the above assumption, there exists an element $a \in R$ such that the height of $(\pi,a)$ is 2 and $R[\frac{1}{a}] \to S[\frac{1}{a}]$ is \'etale.

\begin{definition}[Basic ring]
\label{bigbasic}
Under the set-up as above, let us define
$$
R_{\infty}:=\bigcup_{n>0} R_n ,
$$
where $R_n:=V[[x_2^{p^{-n}},\ldots,x_d^{p^{-n}}]]$.
\end{definition}

The perfect closure of the integral domain $R/\pi R$ is isomorphic to $R_{\infty}/\pi R_{\infty}$. That is, the $p$-th power map on $R_{\infty}/\pi R_{\infty}$ is a bijection. The $\pi$-adic completion of $R_{\infty}$ is isomorphic to the ring of the ramified Witt vectors $\mathbf{W}_{\pi}(R_{\infty}/\pi R_{\infty})$, Namely, we have 
$$
R_{\infty} \hookrightarrow R_{\infty}^{\wedge} \cong \mathbf{W}_{\pi}(R_{\infty}/\pi R_{\infty}).
$$
By Proposition \ref{almostCM}, $R_{\infty}^{\wedge}$ is a normal domain. There is a commutative diagram:
$$
\begin{CD}
\mathbf{W}_{\pi}(R_{\infty}/\pi R_{\infty}) @>\mathbf{F}_{\pi}>\sim> \mathbf{W}_{\pi}(R_{\infty}/\pi R_{\infty}) \\
@V\Phi VV @V\Phi VV \\
R_{\infty}/\pi R_{\infty} @>\Frob^e>\sim> R_{\infty}/\pi R_{\infty} \\
\end{CD}
$$
in which $\Frob^e$ is the $q$-th power map and $\mathbf{F}_{\pi}(\pi)=\pi$ and $\Phi$ is the natural projection obtained by reduction modulo $\pi$. The $q$-Witt-Frobenius map $\mathbf{F}_{\pi}$ restricted to its sub-algebra $R_{\infty}$ defines a ring automorphism:
$$
\mathbf{F}_{\pi}:R_{\infty} \xrightarrow{\sim} R_{\infty};~\mathbf{F}(x_i)=x_i^q~(i=2,\ldots,d).
$$
It is easy to see that $R \to R_{\infty}$ is integral and faithfully flat.

\subsection{Maximal \'etale extensions}
First, we prove the following proposition.

\begin{proposition}
Let $A \hookrightarrow B$ be a torsion free ring extension such that $A$ is a normal domain and $B$ is reduced. Assume that $A \to C_1$ and $A \to C_2$ are finite \'etale extensions contained in $B$. Then there exists a finite \'etale extension $A \to C$ contained in $B$ such that $C$ contains both $C_1$ and $C_2$.
\end{proposition}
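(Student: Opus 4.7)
My plan is to take $C$ to be the image of the multiplication map
\begin{equation*}
\mu \colon C_1 \otimes_A C_2 \to B, \qquad c_1 \otimes c_2 \mapsto c_1 c_2,
\end{equation*}
an $A$-algebra homomorphism whose image visibly contains both $C_1$ (via $c_1 \mapsto c_1 \otimes 1$) and $C_2$. Since the class of finite \'etale ring maps is closed under tensor product, $C_1 \otimes_A C_2$ is finite \'etale over $A$, so at least $C := \mu(C_1 \otimes_A C_2)$ is module-finite over $A$. The real content is therefore to verify that $C$ is actually \'etale---equivalently, that $\ker\mu$ is generated by an idempotent, so that $C$ is realized as a direct factor of $C_1 \otimes_A C_2$.

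To analyze $\ker\mu$ I would first decompose $C_1 \otimes_A C_2$ into its connected components. Since $A$ is a normal domain, $\Spec(A)$ is irreducible, hence connected, so the finite \'etale $A$-algebra $C_1 \otimes_A C_2$ has only finitely many primitive idempotents $e_1,\ldots,e_n$ and
\begin{equation*}
C_1 \otimes_A C_2 \;=\; \prod_{j=1}^{n} D_j, \qquad D_j := e_j\cdot (C_1 \otimes_A C_2).
\end{equation*}
Each $D_j$ is finite \'etale over $A$ and connected. By stability of normality under \'etale maps $D_j$ is a normal ring, and I claim it is a domain: its minimal primes contract to $(0) \subset A$ by flatness, so they arise from the generic fibre $D_j \otimes_A \Frac(A)$, which is a finite product of separable field extensions of $\Frac(A)$, giving $D_j$ only finitely many minimal primes; Lemma~\ref{normal}(ii) then splits $D_j$ as a finite product of normal domains, and connectedness forces a single factor. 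Since every ideal of a finite product of rings is a product of ideals of the factors, I can then write $\ker\mu = \prod_{j=1}^{n} I_j$ with $I_j \subseteq D_j$.

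The key step, and the one I expect to be the main obstacle, is to show that each $I_j$ is either $0$ or all of $D_j$: once that holds, setting $S := \{j : I_j = D_j\}$ identifies $C$ with $\prod_{j \notin S} D_j$, which is finite \'etale over $A$ as a finite product of finite \'etale $A$-algebras and is contained in $B$ by construction. Suppose for contradiction that $I_j$ is a nonzero proper ideal of some $D_j$ and pick $0 \ne d \in I_j$. Because $D_j$ is a domain integral over the normal domain $A$, the minimal polynomial of $d$ over $\Frac(A)$ has its coefficients in $A$, and by minimality its constant term $a_0 \in A$ is nonzero; rearranging the relation $d^n + a_{n-1}d^{n-1} + \cdots + a_0 = 0$ exhibits $a_0 \in (d) \cap A \subseteq I_j$. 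Then $a_0 e_j \in \prod_k I_k = \ker\mu$, so $a_0 \cdot \mu(e_j) = 0$ in $B$. But $a_0 \ne 0$ is regular in $A$ and $A \hookrightarrow B$ is torsion-free, so $a_0$ is regular in $B$, forcing $\mu(e_j) = 0$ and therefore $I_j = D_j$, a contradiction. The heart of the plan is really this last interaction between integrality over the normal domain $A$ and the torsion-freeness hypothesis; the preliminary decomposition into normal-domain components is comparatively formal and relies only on Lemma~\ref{normal}(ii) together with standard facts about finite \'etale algebras over a connected base.
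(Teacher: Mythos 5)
Your proof is correct, and it takes the same basic shape as the paper's: both decompose $C_1 \otimes_A C_2$ into a finite product of normal domains (finite étale over a normal domain, finitely many minimal primes by flatness over a domain plus Lemma~\ref{normal}(ii)) and then prove that the kernel of the multiplication map to $B$ is generated by an idempotent, so that $C := C_1 \cdot C_2$ is a direct factor and hence finite étale. The divergence is in the key step. The paper argues that $J = \ker\mu$ is a radical ideal (using that $B$, hence $C_1\cdot C_2$, is reduced), and then uses Going-Down together with \cite[Lemma B.1.3]{SwHu} to show that the primes minimal over $J$ all contract to $(0)$ in $A$ and are therefore minimal primes of $C_1\otimes_A C_2$, from which it deduces the idempotent. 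You replace this with a more elementary and self-contained argument: writing $\ker\mu = \prod_j I_j$ and observing that if $I_j \ne 0$ then integrality over the normal domain $A$ produces a nonzero $a_0 \in A \cap I_j$ via the constant term of a minimal polynomial, whence $a_0\mu(e_j)=0$ in $B$; torsion-freeness then kills $\mu(e_j)$, forcing $I_j = D_j$. Your route avoids any appeal to Going-Down or external references on contraction of minimal primes, and in fact it never uses the reducedness of $B$ — only torsion-freeness — which is a slight sharpening of what the paper's proof requires at this point.
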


\begin{proof}
If $A \hookrightarrow C_1 \hookrightarrow B$ and $A \hookrightarrow C_2 \hookrightarrow B$ are finite \'etale sub-algebras, then the base change $A \to C_1 \otimes_A C_2$ is also finite \'etale. Using this fact, we prove that $A \hookrightarrow C_1 \cdot C_2 \hookrightarrow B$ is a finite \'etale sub-algebra over $A$. To see this, note first that $C_1 \otimes_A C_2$ is a normal ring by normality of $A$, since normality is preserved under \'etale extensions. Moreover, since $A$ is a domain, $C_1 \otimes_A C_2$ has finitely many minimal primes $P_1,\ldots.P_m$. So we have
$$
C_1 \otimes_A C_2 \cong \bigoplus_{i=1}^m (C_1 \otimes_A C_2)/P_i
$$
and $(C_1 \otimes_A C_2)/P_i$ is a normal domain. Let $J$ be the kernel of the map $g:C_1 \otimes_A C_2 \twoheadrightarrow C_1 \cdot C_2$ defined by $g(a_1 \otimes a_2)=a_1 a_2$. Then $J$ is a radical ideal and it is written as $\cap_{j \in \Lambda} Q_j$, where $Q_j$ range over all primes that are minimal over $J$ (the existence of a \textit{minimal} prime ideal in a ring is assured by Zorn's lemma). Moreover, $g(Q_j)$ is a minimal prime ideal of $C_1 \cdot C_2$. Since $A \to C_1 \cdot C_2$ is a torsion free module-finite extension and $A$ is a normal domain, it satisfies the Going-Down condition \cite[Theorem 2.2.7]{SwHu}. Then we see that the prime $g(Q_j)$ contracts to the zero ideal of $A$ in view of \cite[Lemma B.1.3]{SwHu} under the map $A \to C_1 \cdot C_2$. This shows that each $Q_j$ coincides with one of $P_1,\ldots,P_m$, from which we deduce that $C_1 \cdot C_2$ coincides with the localization $(C_1 \otimes_A C_2)[\frac{1}{e}]$ for some idempotent element $e \in C_1 \otimes_A C_2$. Hence
$$
A \to (C_1 \otimes_A C_2)[\frac{1}{e}] \cong C_1 \cdot C_2
$$
is finite \'etale and this establishes the proposition.
\end{proof}

Note that $C_1 \cdot C_2$ in the proposition is a finite direct product of normal domains. We make a definition of \textit{maximal \'etale extension} in the generality we need.

\begin{definition}
\label{maximaletale}
Let $A \hookrightarrow B$ be a torsion free ring extension such that $A$ is a normal domain and $B$ is reduced. Then we define the \textit{maximal \'etale extension} of $A$ contained in $B$ to be the filtered colimit of all finite \'etale $A$-algebras contained in $B$. We will write this extension as $A_B^{\mathrm{\acute{e}t}}$ or just as $A^{\mathrm{\acute{e}t}}$, if no confusion occurs.
\end{definition}

Note that the maximal \'etale extension of $A$ is a sub-algebra of $B$ and it is an integral ind-\'etale extension of $A$.

\begin{lemma}
The maximal \'etale extension $A^{\mathrm{\acute{e}t}}$ is a normal ring.
\end{lemma}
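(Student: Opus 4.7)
The plan is to check normality of $A^{\mathrm{\acute{e}t}}$ one prime at a time. Reducedness is automatic since $A^{\mathrm{\acute{e}t}} \subset B$ and $B$ is reduced, so it suffices to show that for every prime $\fq \subset A^{\mathrm{\acute{e}t}}$ the localization $(A^{\mathrm{\acute{e}t}})_{\fq}$ is an integrally closed domain.

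I would first observe that every finite \'etale $A$-subalgebra $C \subset B$ is a finite direct product of normal domains. Normality of $C$ follows because \'etale extensions preserve normality over a normal base; since $C$ is module-finite over the domain $A$ it has only finitely many minimal primes, so Lemma \ref{normal}(ii) splits it as $C \cong \prod_i C/\fp_i$ with each factor a normal domain (this decomposition was already exploited in the proof of the preceding proposition). Next, using that localization commutes with filtered colimits, one writes
\[
(A^{\mathrm{\acute{e}t}})_{\fq} \;=\; \varinjlim_{C}\, C_{\fq \cap C},
\]
the colimit taken over the directed family of finite \'etale $A$-subalgebras of $B$. A prime of a finite product lies on a single factor, so each $C_{\fq \cap C}$ is a localization of one of the normal domains $C/\fp_i$, hence is itself a normal local domain. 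For an inclusion $C \hookrightarrow D$ of two such subalgebras, the map $C \to D$ is itself finite \'etale (being obtained from the finite \'etale $A$-algebras $A \to C$ and $A \to D$), so the induced map $C_{\fq \cap C} \to D_{\fq \cap D}$ is a flat local homomorphism, hence faithfully flat and in particular injective.

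The last step is the standard observation that a filtered colimit of integrally closed domains along injective transition maps is again an integrally closed domain: an element of the fraction field of the colimit that is integral over the colimit is witnessed by finitely many data (numerator, denominator, and the coefficients of a monic integral equation) that already live in a single stage $C_{\fq \cap C}$, where integral closedness of that stage places the element there. This exhibits $(A^{\mathrm{\acute{e}t}})_{\fq}$ as an integrally closed domain, as required. The main technical point I expect to have to pin down is the claim that $C \hookrightarrow D$ is itself finite \'etale, so that its localizations inherit flatness; once that cancellation property is in place, every other step is routine.
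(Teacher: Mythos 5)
Your proof is correct and follows the same two ideas the paper uses: normality is preserved under \'etale extensions (so each finite \'etale $C \subset B$ is a finite product of normal domains), and normality survives a filtered colimit. Where the paper simply cites (\cite{SwHu}; Proposition 19.3.1), which asserts that a filtered colimit of normal rings along \emph{torsion-free} transition maps is normal, you prove the colimit step by hand, localizing at a prime $\fq$ and identifying $(A^{\mathrm{\acute{e}t}})_{\fq}$ with $\varinjlim_C C_{\fq\cap C}$. That identification and the ``integrality is a finite condition'' argument are both fine. The one point you flagged — that for an inclusion $C \hookrightarrow D$ of finite \'etale $A$-subalgebras the map $C \to D$ is itself finite \'etale — is the standard cancellation lemma for \'etale morphisms (if $g\circ f$ is \'etale and $g$ is unramified, then $f$ is \'etale; see EGA IV 17.3.4 or the Stacks Project), so it does hold; but note that you use it only to get injectivity of the local transition maps, and injectivity already follows from torsion-freeness of $C \hookrightarrow D$ (both sit in the reduced ring $B$ and are torsion-free over the domain $A$), which is exactly the weaker hypothesis the paper's cited proposition needs. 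So your route is a hands-on, slightly stronger-hypothesis version of the paper's one-line citation.
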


\begin{proof}
Note that the normality is preserved under \'etale extension and the filtered colimit of normal rings (with torsion free transition maps) is normal \cite[Proposition 19.3.1]{SwHu}. Thus, $A^{\mathrm{\acute{e}t}}$ is a normal ring by normality of $A$, that is, the localization of $A^{\mathrm{\acute{e}t}}$ at any of its maximal ideal is an integrally closed domain.
\end{proof}

\begin{proposition}
\label{maxetale}
Assume that $A$ is a normal domain and fix an embedding into the absolute integral closure $A \hookrightarrow A^+$ with $\overline{K}=\Frac(A^+)$ and let $A^{\mathrm{\acute{e}t}}$ be the maximal \'etale extension of $A$ contained in $A^+$. Then $K=\Frac(A) \to L=\Frac(A^{\mathrm{\acute{e}t}})$ is a Galois extension. In particular, we have $\sigma(A^{\mathrm{\acute{e}t}})=A^{\mathrm{\acute{e}t}}$ for any $\sigma \in \Hom_K(L,\overline{K})$.
\end{proposition}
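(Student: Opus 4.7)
My plan is to establish that $K \to L$ is both separable and normal, by directly exploiting the structure of $A^{\mathrm{\acute{e}t}}$ as a filtered colimit of finite \'etale $A$-subalgebras of $A^+$, and then deduce the stability statement from the normality of $L/K$. The principal technical step will be the normality part, which amounts to checking that for every finite \'etale sub-algebra $C \subset A^+$ the Galois closure of $\Frac(C)/K$ is realized as the fraction field of another finite \'etale sub-algebra sitting inside $A^+$; this rests on iterating the preceding proposition (the compositum of two finite \'etale sub-algebras of a torsion-free reduced extension of a normal domain is again finite \'etale).

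Separability is immediate: every finite \'etale $A$-subalgebra $C \subset A^+$ is a domain (being a subring of $A^+$), and $K \to \Frac(C)$ is a finite separable extension because \'etale maps are generically separable. Since $L$ is the filtered colimit of such $\Frac(C)$, the extension $K \to L$ is separable.

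For normality, fix a finite \'etale $A$-subalgebra $C \subset A^{\mathrm{\acute{e}t}}$, set $L' := \Frac(C)$, and let $\sigma_1 = \id, \sigma_2, \ldots, \sigma_r$ be the distinct $K$-embeddings $L' \hookrightarrow \overline{K}$. Each $\sigma_i$ fixes $A$ pointwise, so $\sigma_i(C) \subset A^+$, and $A \to \sigma_i(C)$ is $A$-algebra-isomorphic to $A \to C$ via $\sigma_i$, hence finite \'etale. Iterating the preceding proposition, the sub-ring $D := \sigma_1(C) \cdots \sigma_r(C) \subset A^+$ is finite \'etale over $A$, so $D \subset A^{\mathrm{\acute{e}t}}$. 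One now checks that $\Frac(D)$ equals the Galois closure $M := \sigma_1(L') \cdots \sigma_r(L')$ of $L'/K$: the inclusion $\Frac(D) \supset \sigma_i(L')$ is immediate, while $\Frac(D) \subset M$ holds because $D$ is $A$-generated by elements lying in $M$. Hence $M \subset L$, which proves $L/K$ is normal; combined with separability, $L/K$ is Galois.

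For the final statement, let $\sigma \in \Hom_K(L, \overline{K})$. Normality forces $\sigma(L) = L$, so $\sigma$ restricts to a $K$-automorphism of $L$, i.e.\ an element of $\Gal(L/K)$. If $C \subset A^{\mathrm{\acute{e}t}}$ is a finite \'etale $A$-subalgebra, then $\sigma(C)$ is integral over $A$, hence lies in $A^+$, and $A \to \sigma(C)$ is finite \'etale via the $A$-algebra isomorphism $\sigma|_C$; thus $\sigma(C) \subset A^{\mathrm{\acute{e}t}}$. Passing to the union of all such $C$ yields $\sigma(A^{\mathrm{\acute{e}t}}) \subset A^{\mathrm{\acute{e}t}}$, and the reverse inclusion follows by applying the same reasoning to $\sigma^{-1}$. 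The remainder of the argument beyond the iterated application of the preceding proposition is purely formal Galois theory.
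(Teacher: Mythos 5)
Your proposal is correct and follows essentially the same route as the paper, differing mainly in presentation order and in the level at which the compositum argument is run. The paper proves the stability statement $\sigma(A^{\mathrm{\acute{e}t}})=A^{\mathrm{\acute{e}t}}$ directly by observing that $\sigma(A^{\mathrm{\acute{e}t}})$ is an ind-\'etale $A$-subalgebra of $A^+$, forming the compositum $A^{\mathrm{\acute{e}t}}\cdot\sigma(A^{\mathrm{\acute{e}t}})$, and invoking maximality; normality of $L/K$ is then a formal consequence. You instead prove normality first by working at the finite level: for a fixed finite \'etale subalgebra $C$, you take the compositum of all its conjugates $\sigma_i(C)\subset A^+$, note that iterating the preceding proposition shows this is again finite \'etale over $A$ and hence lands in $A^{\mathrm{\acute{e}t}}$, and conclude that the Galois closure of $\Frac(C)/K$ lies in $L$; stability of $A^{\mathrm{\acute{e}t}}$ is then deduced by the same mechanism applied to $\sigma$ and $\sigma^{-1}$. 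Your finite-level argument is slightly more explicit about why composita of ind-\'etale subalgebras remain ind-\'etale (the preceding proposition is stated for pairs of finite \'etale subalgebras, and the paper tacitly passes to colimits), so it trades a bit of brevity for a cleaner justification of the key step.
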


\begin{proof}
The field extension $K \to L$ is ind-\'etale by construction, so it is separable. For normality, let $g \in \Hom_K(L,\overline{K})$. Then what we need to show is that $g(L)=L$. Since $g(A)=A$, $g(A^{\mathrm{\acute{e}t}})$ is an ind-\'etale extension over $A$ which is contained in $A^+$. Hence the composite ring $A^{\mathrm{\acute{e}t}} \cdot g(A^{\mathrm{\acute{e}t}})$ is ind-\'etale over $A$. Then by the maximal \'etale condition of $A^{\mathrm{\acute{e}t}}$ over $A$, we must have $g(A^{\mathrm{\acute{e}t}}) \subset A^{\mathrm{\acute{e}t}} \cdot g(A^{\mathrm{\acute{e}t}})=A^{\mathrm{\acute{e}t}}$. From this, it is easy to see that $g(A^{\mathrm{\acute{e}t}})=A^{\mathrm{\acute{e}t}}$. Thus $g(L)=L$ and $K \to L$ is a normal extension, as desired.
\end{proof}

We resume the notation as in the previous section. We note that $\pi R_{\infty}$ is a prime ideal of $R_{\infty}$ and the extension $R_{(\pi)} \to (R_{\infty})_{(\pi)}$ is an integral extension of discrete valuation rings. Moreover, the extension of residue fields:
$$
R_{(\pi)}/\pi R_{(\pi)} \to (R_{\infty})_{(\pi)}/\pi (R_{\infty})_{(\pi)}
$$
is purely inseparable and $(R_{\infty})_{(\pi)}/\pi (R_{\infty})_{(\pi)}$ is a perfect field which is identified with the perfect closure of $\Frac(\mathbb{F}_q[[x_2,\ldots,x_d]])$. Let $(R_{\infty})_{(\pi)}^+$ be the absolute integral closure of $(R_{\infty})_{(\pi)}$.

\begin{definition}
\label{crucialdef}
We define a normal domain $(R_{\infty})_{(\pi)}^{\mathrm{\acute{e}t}}$ to be the maximal \'etale extension of $(R_{\infty})_{(\pi)}$ contained in $(R_{\infty})_{(\pi)}^+$. To simplify the notation, we put
$$
\mathcal{R}:=(R_{\infty})_{(\pi)}^{\mathrm{\acute{e}t}}.
$$
Let us fix a maximal ideal $\mathcal{Q}$ of $\mathcal{R}$. We put
$$
\mathcal{K_Q}:=\mathcal{R}_{\mathcal{Q}}/\pi \mathcal{R}_{\mathcal{Q}},
$$
where $\mathcal{R}_{\mathcal{Q}}$ is the localization of $\mathcal{R}$ at $\mathcal{Q}$.
\end{definition}

\begin{remark}
Note that $\mathcal{R}$ is a normal domain of Krull dimension one, as it is integral over the discrete valuation ring $(R_{\infty})_{(\pi)}$. Moreover, $\mathcal{R}_{\mathcal{Q}}$ is the filtered colimit of discrete valuation rings $(\mathcal{R}_i,\pi_i)_{i \in \Lambda}$ such that $(\pi_i)=\pi \mathcal{R}_i$. From this, we see that $\mathcal{R}_{\mathcal{Q}}$ is a discrete valuation ring and $\mathcal{K_Q}$ is its residue field
\end{remark}

With the notation as in Definition \ref{crucialdef}, the ring extension
$$
(R_{\infty})_{(\pi)}/\pi(R_{\infty})_{(\pi)} \to \mathcal{R}/\pi \mathcal{R}
$$
is ind-\'etale. Since the class of perfect algebras is stable under taking ind-\'etale extensions, $\mathcal{R}/\pi\mathcal{R}$ is a perfect $V/\pi V$-algebra. Hence $\mathcal{K_Q}$ is a perfect field. Now let $\mathbf{W}_{\pi}(\mathcal{K_Q})$ denote its ring of ramified Witt vectors. Then $\mathbf{W}_{\pi}(\mathcal{K_Q})$ is an $R_{\infty}$-algebra.

\begin{lemma}
\label{torsionfreenormal}
$R_{\infty} \to \mathbf{W}_{\pi}(\mathcal{K_Q})$ is a torsion free ring extension and $\mathbf{W}_{\pi}(\mathcal{K_Q})$ is a complete discrete valuation ring.
\end{lemma}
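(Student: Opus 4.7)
The plan is to establish the second assertion first, then realize the natural map $R_{\infty} \to \mathbf{W}_{\pi}(\mathcal{K})$ as a chain of injections ending in a domain, which will automatically yield the torsion-free condition.

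For the complete DVR structure of $\mathbf{W}_{\pi}(\mathcal{K})$: the discussion immediately preceding the lemma establishes that $\mathcal{K}$ is a perfect field. Proposition~\ref{Serre2} then characterizes $\mathbf{W}_{\pi}(\mathcal{K})$ as the unique $V$-algebra that is $\pi$-adically complete and separated, $\pi$-torsion free, and satisfies $\mathbf{W}_{\pi}(\mathcal{K})/\pi\mathbf{W}_{\pi}(\mathcal{K}) \cong \mathcal{K}$. Since $\mathcal{K}$ is a field, $\pi$ is a non-zero-divisor whose quotient ideal is maximal. Any $x \notin \pi\mathbf{W}_{\pi}(\mathcal{K})$ reduces to a unit in $\mathcal{K}$, so $xy = 1 + \pi z$ for some $y,z$, and $\pi$-adic completeness gives $x^{-1} = y \sum_{n \ge 0}(-\pi z)^n$. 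Hence $\mathbf{W}_{\pi}(\mathcal{K})$ is local with principal maximal ideal $(\pi)$, and $\pi$-adic separatedness gives $\bigcap_n \pi^n \mathbf{W}_{\pi}(\mathcal{K})=0$. Consequently every nonzero element has the form $\pi^n u$ with $u$ a unit, making $\mathbf{W}_{\pi}(\mathcal{K})$ a complete discrete valuation ring.

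For the torsion-free claim, I would display the natural map as the composition
\[
R_{\infty} \hookrightarrow R_{\infty,(\pi)} \hookrightarrow \mathcal{R} \hookrightarrow \mathcal{R}_{\mathcal{Q}} \hookrightarrow (\mathcal{R}_{\mathcal{Q}})^{\wedge} \cong \mathbf{W}_{\pi}(\mathcal{K}),
\]
and check injectivity of each arrow. The first arrow is the localization of the integral domain $R_{\infty}$ at the prime $\pi R_{\infty}$. The second is built into the definition of the maximal \'etale extension $\mathcal{R} \subset (R_{\infty})_{(\pi)}^+$. The third is a localization of $\mathcal{R}$, which is a domain because it sits inside the domain $(R_{\infty})_{(\pi)}^+$. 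The fourth is injective because, by the remark following Definition~\ref{crucialdef}, $\mathcal{R}_{\mathcal{Q}}$ is a DVR and is thus $\pi$-adically separated. The final isomorphism is obtained by applying the uniqueness clause of Proposition~\ref{Serre2} to $(\mathcal{R}_{\mathcal{Q}})^{\wedge}$, which is $\pi$-adically complete, $\pi$-torsion free (inherited from the DVR $\mathcal{R}_{\mathcal{Q}}$), and whose reduction modulo $\pi$ is $\mathcal{K}$. Since both $R_{\infty}$ and $\mathbf{W}_{\pi}(\mathcal{K})$ are domains, the injectivity of the composite means that every regular (i.e., nonzero) element of $R_{\infty}$ remains regular in $\mathbf{W}_{\pi}(\mathcal{K})$, which is precisely the torsion-free condition.

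The only nontrivial step is the identification $(\mathcal{R}_{\mathcal{Q}})^{\wedge} \cong \mathbf{W}_{\pi}(\mathcal{K})$; this rests on the perfectness of $\mathcal{K}$, already recorded in the preceding paragraph via the stability of perfect $V/\pi V$-algebras under ind-\'etale extensions, combined with the uniqueness of $\pi$-adic deformations from Proposition~\ref{Serre2}. The remaining verifications are formal.
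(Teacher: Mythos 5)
Your proof is correct and takes essentially the same approach as the paper: factor $R_{\infty}\to\mathbf{W}_{\pi}(\mathcal{K})$ through a chain of discrete valuation rings to get injectivity (and hence torsion-freeness, since both ends are domains), and use Proposition~\ref{Serre2} together with the perfectness of $\mathcal{K}$ to identify $\mathbf{W}_{\pi}(\mathcal{K})$ as a complete DVR. The paper's proof is far terser — it merely notes the factorization through $(R_{\infty})_{(\pi)}$ and calls the DVR assertion "clear" — and you have also folded in the identification $\mathcal{R}_{\mathcal{Q}}^{\wedge}\cong\mathbf{W}_{\pi}(\mathcal{K})$, which the paper isolates as the immediately following lemma; your handling of these details is sound.
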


\begin{proof}
For the first statement, $R_{\infty} \to \mathbf{W}_{\pi}(\mathcal{K_Q})$ factors as $R_{\infty} \to (R_{\infty})_{(\pi)} \to \mathbf{W}_{\pi}(\mathcal{K_Q})$ and $(R_{\infty})_{(\pi)}$ is a discrete valuation ring. The torsion freeness follows from this. The second statement is clear.
\end{proof}

\begin{lemma}
The $\pi$-adic completion of $\mathcal{R}_{\mathcal{Q}}$ is isomorphic to $\mathbf{W}_{\pi}(\mathcal{K_Q})$. Moreover, $\mathcal{R}_{\mathcal{Q}}$ is $\pi$-adically separated.
\end{lemma}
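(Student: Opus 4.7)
The plan is to exploit the structural information already recorded in the preceding remark, namely that $\mathcal{R}_{\mathcal{Q}}$ is a discrete valuation ring whose maximal ideal is generated by $\pi$ and whose residue field is $\mathcal{K}$. Granted this, both assertions follow quickly, the first by a direct valuation-theoretic argument and the second by appealing to the uniqueness statement in Proposition \ref{Serre2}.

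First, I would dispatch the $\pi$-adic separatedness. Write $\mathcal{R}_{\mathcal{Q}}$ as the filtered colimit of the discrete valuation rings $(\mathcal{R}_i,\pi_i)$ with $\pi_i=\pi\mathcal{R}_i$. Any nonzero $x \in \mathcal{R}_{\mathcal{Q}}$ already lies in some $\mathcal{R}_i$ and therefore has a finite valuation $v_i(x)=n$, which means $x \notin \pi^{n+1}\mathcal{R}_i$ and, since $\mathcal{R}_i \hookrightarrow \mathcal{R}_{\mathcal{Q}}$ is faithfully flat, $x \notin \pi^{n+1}\mathcal{R}_{\mathcal{Q}}$ either. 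Hence $\bigcap_{n>0}\pi^n \mathcal{R}_{\mathcal{Q}}=0$. Equivalently, since $\mathcal{R}_{\mathcal{Q}}$ itself is a DVR with uniformizer $\pi$, this is just Krull's intersection theorem.

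Next I would identify the $\pi$-adic completion with $\mathbf{W}_{\pi}(\mathcal{K})$. The completion $\mathcal{R}_{\mathcal{Q}}^{\wedge}$ is by construction $\pi$-adically complete and separated. Because $\mathcal{R}_{\mathcal{Q}}$ is a domain, its $\pi$-adic completion is again $\pi$-torsion free; one may either invoke (\cite{Shim3}; Lemma 4.2) as was done earlier in the excerpt, or argue directly using the separatedness just proved in the style of Lemma \ref{p-adic}. Furthermore,
$$
\mathcal{R}_{\mathcal{Q}}^{\wedge}/\pi \mathcal{R}_{\mathcal{Q}}^{\wedge} \;\cong\; \mathcal{R}_{\mathcal{Q}}/\pi\mathcal{R}_{\mathcal{Q}} \;=\; \mathcal{K},
$$
which is a perfect $V/\pi V$-algebra by the discussion preceding Lemma \ref{torsionfreenormal}. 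Thus $\mathcal{R}_{\mathcal{Q}}^{\wedge}$ satisfies all the hypotheses characterizing the ring of ramified Witt vectors in Proposition \ref{Serre2}, and the uniqueness clause of that proposition forces a canonical $V$-algebra isomorphism $\mathcal{R}_{\mathcal{Q}}^{\wedge} \cong \mathbf{W}_{\pi}(\mathcal{K})$.

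The main conceptual point I expect to need care with is the implicit claim that $\pi$ genuinely generates the maximal ideal of each $\mathcal{R}_i$ (so that the $\pi$-adic topology coincides with the maximal-adic topology); this rests on the fact that the finite \'etale extensions $(R_{\infty})_{(\pi)} \to \mathcal{R}_i$ comprising $\mathcal{R}=(R_{\infty})_{(\pi)}^{\mathrm{\acute{e}t}}$ are in particular unramified, so no new ramification over $\pi$ is introduced. Once this is in place, the argument above is essentially formal, and everything else reduces to applying Proposition \ref{Serre2} as a black box.
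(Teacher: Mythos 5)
Your proposal is correct and follows essentially the same route as the paper: both invoke the uniqueness clause of Proposition \ref{Serre2} for the identification of the completion with $\mathbf{W}_{\pi}(\mathcal{K})$, and both derive separatedness from the fact that $\mathcal{R}_{\mathcal{Q}}$ is a discrete valuation ring (your filtered-colimit argument is a slightly more verbose way of reaching the same Krull-intersection conclusion the paper states in one line).
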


\begin{proof}
$\mathcal{R}_{\mathcal{Q}}$ is a $\pi$-torsion free ring and $\mathcal{K_Q}=\mathcal{R}_{\mathcal{Q}}/\pi \mathcal{R}_{\mathcal{Q}}$ is a perfect field which is isomorphic to $\mathbf{W}_{\pi}(\mathcal{K_Q})/\pi\mathbf{W}_{\pi}(\mathcal{K_Q})$. Hence the $\pi$-adic completion of $\mathcal{R}_{\mathcal{Q}}$ is canonically isomorphic to $\mathbf{W}_{\pi}(\mathcal{K_Q})$ in view of Proposition \ref{Serre2}. Since $\mathcal{R}_{\mathcal{Q}}$ is a discrete valuation ring, it is $\pi$-adically separated.
\end{proof}

By this lemma, there is a chain of ring injections:
\begin{equation}
\label{fundamental}
R_{\infty} \hookrightarrow \mathcal{R} _{\mathcal{Q}} \hookrightarrow \mathcal{R}_{\mathcal{Q}}^{\wedge} \cong \mathbf{W}_{\pi}(\mathcal{K_Q}),
\end{equation}
where $\mathcal{R}_{\mathcal{Q}}^{\wedge}$ denotes the $\pi$-adic completion of a ring $\mathcal{R}_{\mathcal{Q}}$. More precisely, $\mathcal{R}_{\mathcal{Q}} \hookrightarrow \mathbf{W}_{\pi}(\mathcal{K_Q})$ in $(\ref{fundamental})$ is defined as follows:

\begin{enumerate}
\item[$\bullet$]
$\mathcal{R}_{\mathcal{Q}} \hookrightarrow \mathcal{R}_{\mathcal{Q}}^{\wedge}$ is the canonical injection induced by the $\pi$-adic completion. The $V$-algebra isomorphism $\mathcal{R}_{\mathcal{Q}}^{\wedge} \cong \mathbf{W}_{\pi}(\mathcal{K_Q})$ is uniquely characterized by fixing an isomorphism on the residue fields modulo $\pi$. Namely, we have fixed an identity map on residue fields:
$$
\mathcal{R}_{\mathcal{Q}}^{\wedge}/\pi \mathcal{R}_{\mathcal{Q}}^{\wedge}=\mathcal{K_Q}= \mathbf{W}_{\pi}(\mathcal{K_Q})/\pi\mathbf{W}_{\pi}(\mathcal{K_Q}).
$$
\end{enumerate}

The sequence $(\ref{fundamental})$ will play an important role. On the other hand, since $R_{\infty}$ is $\pi$-adically separated, we have another commutative square:
$$
\begin{CD}
R_{\infty} @>\hookrightarrow>> R_{\infty}^{\wedge} \cong \mathbf{W}_{\pi}(R_{\infty}/\pi R_{\infty}) @>\hookrightarrow>> \mathbf{W}_{\pi}(\mathcal{K_Q}) \\
@V\mathbf{F}_{\pi}V\wr V @V\mathbf{F}_{\pi}V\wr V  @V\mathbf{F}_{\pi}V\wr V \\
R_{\infty} @>\hookrightarrow>> R_{\infty}^{\wedge} \cong \mathbf{W}_{\pi}(R_{\infty}/\pi R_{\infty}) @>\hookrightarrow>> \mathbf{W}_{\pi}(\mathcal{K_Q}) \\
\end{CD}
$$
where $\mathbf{W}_{\pi}(R_{\infty}/\pi R_{\infty}) \hookrightarrow \mathbf{W}_{\pi}(\mathcal{K_Q})$ is induced by an injection $R_{\infty}/\pi R_{\infty} \hookrightarrow \mathcal{K_Q}$.

\section{Witt-Frobenius stable algebras}
\label{sec7}

\subsection{Construction of some algebras}
Let the notation be as in Definition \ref{crucialdef}. In this section, we construct two big $R_{\infty}$-algebras as sub-algebras of $\mathbf{W}_{\pi}(\mathcal{K_Q})$. These algebras are defined as large integral extensions of $R$ contained in $\mathbf{W}_{\pi}(\mathcal{K_Q})$ such that they are stable under $\mathbf{F}_{\pi}$. However, it should be noted that if $B \subset \mathbf{W}_{\pi}(\mathcal{K_Q})$ is a sub-algebra, then it is generally not true that $\mathbf{F}_{\pi}(B) \subset B$ (see Example \ref{crucialexample} below). The failure of the stability of the $q$-Witt-Frobenius map forces us to take a certain huge integral extension of $R_{\infty}$ to achieve the stability. Let $\Ht I$ denote the height of an ideal $I$ in a ring $A$.

\begin{definition}
\label{basicring}
We fix an element $a \in R=V[[x_2,\ldots,x_d]]$ with the condition $\Ht(\pi,a)=2$ and define an $R_{\infty}$-algebra $T_n$ ($n=0,1,\ldots$) with the following conditions:
\begin{enumerate}
\item[$\bullet$]
$R_{\infty} \subset T_n \subset \mathbf{W}_{\pi}(\mathcal{K_Q})$ and $R_{\infty} \to T_n$ is an integral extension.

\item[$\bullet$]
The localization map
$$
R_{\infty}[\frac{1}{a_n}] \to T_n[\frac{1}{a_n}]
$$
is the maximal \'etale extension contained in $\mathbf{W}_{\pi}(\mathcal{K_Q})$ for $n \ge 0$, where we put
$$
a_n:=\prod_{k=-n}^n \mathbf{F}_{\pi}^k(a),
$$
where $\mathbf{F}_{\pi}^k$ is the $k$-th iterated $\mathbf{F}_{\pi}$ or $\mathbf{F}_{\pi}^{-1}$, depending on $k$ being $>0$ or $<0$, and $\mathbf{F}_{\pi}^0$ is the identity map (in particular, we have $a_0=a$). 
\end{enumerate}
Under the above notation, there is an increasing chain of rings:
$$
R_{\infty} \subset T_0 \subset T_1 \subset \cdots \subset \mathbf{W}_{\pi}(\mathcal{K_Q})
$$
and the filtered colimit $T_{\infty}:=\varinjlim_n T_n$ satisfies the inclusion $R_{\infty} \subset T_{\infty} \subset \mathbf{W}_{\pi}(\mathcal{K_Q})$.
\end{definition}

It is probably better to write $T_n^{\{a\}}$ rather than just $T_n$ to indicate that $T_n$ depends on $a \in R$, but we choose a simpler form to avoid the complication of symbols. Let us establish some properties of $T_{\infty}$.

\begin{lemma}
\label{lemma1}
Under the notation as above, we have the following statements:

\begin{enumerate}
\item[$\mathrm{(i)}$]
$T_n$ is a $\pi$-adically separated, normal domain for $n=0,1,\ldots,\infty$.

\item[$\mathrm{(ii)}$]
Assume that $S$ is a module-finite $R_n$-algebra such that 
$$
R_n[\frac{1}{a_m}] \to S[\frac{1}{a_m}]
$$ 
is finite \'etale for some $m \in \mathbf{N}$ and that $S$ is contained in $\mathbf{W}_{\pi}(\mathcal{K_Q})$. Then $S$ is contained in $T_{\infty}$. Moreover, $T_{\infty}$ does not depend on the choice of the maximal ideal $\mathcal{Q}$ of $\mathcal{R}$.

\item[$\mathrm{(iii)}$]
The ring extension 
$$
R_{\infty}/\pi R_{\infty}[\frac{1}{a}] \to T_{\infty}/\pi T_{\infty}[\frac{1}{a}]
$$ 
is ind-\'etale. In particular, $T_{\infty}/\pi T_{\infty}[\frac{1}{a}]$ is a perfect $V/\pi V$-algebra.

\item[$\mathrm{(iv)}$]
$(\pi,a)$ is a regular sequence on $T_{\infty}$. Moreover, $T_{\infty}/\pi T_{\infty}$ is a reduced $\mathbb{F}_p$-algebra.

\end{enumerate}
\end{lemma}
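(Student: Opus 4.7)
The four claims will be proved in sequence; (iv) requires the most work.

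For (i), I exploit that $T_n$ sits inside the complete DVR $\mathbf{W}_{\pi}(\mathcal{K})$, so it is automatically $\pi$-adically separated and an integral domain. Normality comes from viewing $T_n$ as the integral closure of $R_{\infty}$ inside $M_n := T_n[\frac{1}{a_n}]$; the latter is the maximal \'etale extension of $R_{\infty}[\frac{1}{a_n}]$ in $\mathbf{W}_{\pi}(\mathcal{K})[\frac{1}{a_n}]$ and hence is a normal ring (lemma after Definition \ref{maximaletale}), in fact a normal domain since it lies in the domain $\mathbf{W}_{\pi}(\mathcal{K})[\frac{1}{a_n}]$. If $x \in \Frac(T_n) = \Frac(M_n)$ is integral over $T_n$, then $x$ is integral over $M_n$ (so $x \in M_n$) and integral over $R_{\infty}$ (so $x \in T_n$). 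The statement for $T_{\infty}$ follows from (\cite{SwHu}; Proposition 19.3.1) as already invoked in the paper. For (ii), flat base change of the finite \'etale map $R_n[\frac{1}{a_m}]\to S[\frac{1}{a_m}]$ along $R_n[\frac{1}{a_m}]\to R_{\infty}[\frac{1}{a_m}]$ is finite \'etale, and the compositum $(R_{\infty}\cdot S)[\frac{1}{a_m}]\subset \mathbf{W}_{\pi}(\mathcal{K})[\frac{1}{a_m}]$ is a domain quotient of that tensor product; by the finite product decomposition of finite \'etale algebras over normal domains (as used in the proposition preceding Definition \ref{maximaletale}), the compositum is one of the normal-domain direct factors, hence itself finite \'etale over $R_{\infty}[\frac{1}{a_m}]$ and therefore contained in $T_m[\frac{1}{a_m}]$ by maximality. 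Integrality of $S$ over $R_{\infty}$ then places $S\subset T_m \subset T_{\infty}$, and independence of $\mathcal{Q}$ follows from this intrinsic characterisation.

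For (iii), base change of the ind-\'etale extension $R_{\infty}[\frac{1}{a_n}]\to T_n[\frac{1}{a_n}]$ along reduction modulo $\pi$ yields an ind-\'etale map $(R_{\infty}/\pi R_{\infty})[\frac{1}{a_n}]\to (T_n/\pi T_n)[\frac{1}{a_n}]$. Because $\mathbf{F}_{\pi}\equiv \mathrm{Frob}^{e}\pmod{\pi}$ and Frobenius is bijective on the perfect ring $R_{\infty}/\pi R_{\infty}$, the reduction of $\mathbf{F}_{\pi}^{k}(a)$ is the $q^{|k|}$-th power or $q^{|k|}$-th root of the reduction of $a$, and becomes a unit on inverting $a$; hence $(R_{\infty}/\pi R_{\infty})[\frac{1}{a_n}] = (R_{\infty}/\pi R_{\infty})[\frac{1}{a}]$, and analogously for $T_n/\pi T_n$. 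Colimiting in $n$ gives the stated ind-\'etale map, and perfectness of $(T_{\infty}/\pi T_{\infty})[\frac{1}{a}]$ is Remark \ref{Frobeniusetale}.

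The heart of (iv) is showing that $a$ is a non-zero-divisor on $T_{\infty}/\pi T_{\infty}$; $\pi$-regularity on $T_{\infty}$ is immediate because $T_{\infty}\subset\mathbf{W}_{\pi}(\mathcal{K})$. My plan is to realise $T_{\infty}$ as the filtered directed union of complete Noetherian normal local domains $S_{\alpha}$, each module-finite over some $R_{k}$ with $R_{k}[\frac{1}{a_m}]\to S_{\alpha}[\frac{1}{a_m}]$ finite \'etale. These $S_{\alpha}$ arise by descending a finite \'etale $R_{\infty}[\frac{1}{a_m}]$-subalgebra of $\mathbf{W}_{\pi}(\mathcal{K})[\frac{1}{a_m}]$ to a finite \'etale $R_{k}[\frac{1}{a_m}]$-algebra for some $k$ (standard descent of finitely presented algebras and idempotents along the filtered colimit $R_{\infty} = \varinjlim R_k$), picking the domain factor landing in $\mathbf{W}_{\pi}(\mathcal{K})$, and taking the integral closure of $R_{k}$ inside it; this integral closure is Noetherian and normal since $R_{k}$ is a complete regular local ring (hence Nagata). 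In each $S_{\alpha}$ the ideal $(\pi, a)$ has height $2$: in $R_{k}$ this is given, and going-down for integral extensions of normal Noetherian domains (\cite{SwHu}; Theorem 2.2.7) forces every minimal prime of $\pi S_{\alpha}$ to contract to $\pi R_{k}$, which does not contain $a$; since $S_{\alpha}$ is a Noetherian normal domain the associated primes of $\pi S_{\alpha}$ coincide with its minimal primes, so $a$ is regular on $S_{\alpha}/\pi S_{\alpha}$. Passing to the union: if $ax = \pi y$ in $T_{\infty}$, pick any $S_{\alpha}$ containing both $x$ and $y$ and deduce $x\in \pi S_{\alpha}\subset \pi T_{\infty}$. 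Reducedness of $T_{\infty}/\pi T_{\infty}$ then follows because the just-proved regularity of $a$ embeds it into the perfect (hence reduced) ring $(T_{\infty}/\pi T_{\infty})[\frac{1}{a}]$ of (iii). The main obstacle throughout is this Noetherian-approximation step in (iv): the rings $T_{n}$ and $T_{\infty}$ are not Noetherian, so associated-prime methods are unavailable on them directly, and one must carefully descend finite \'etale pieces to Noetherian normal domains finite over the regular local $R_{k}$, after which classical height and going-down arguments conclude.
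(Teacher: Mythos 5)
Your proposal is correct and follows essentially the same route as the paper: (i) and (ii) are immediate from $T_n\subset\mathbf{W}_{\pi}(\mathcal{K})$ and maximality, (iii) is the paper's reduction-mod-$\pi$/base-change argument plus the identity $(R_{\infty}/\pi R_{\infty})[\tfrac{1}{a_n}]=(R_{\infty}/\pi R_{\infty})[\tfrac{1}{a}]$, and (iv) in both cases rests on writing $T_{\infty}$ as a filtered union of Noetherian normal domains module-finite over some $R_k$ and using normality (the paper cites Serre's criterion, you unwind the same fact via going-down and the coincidence of associated and minimal primes of $\pi S_{\alpha}$). The only stylistic difference is your $\mathcal{Q}$-independence argument in (ii), which appeals to the intrinsic union-characterization rather than to the inclusion $T_{\infty}\subset\mathcal{R}$ that the paper establishes later in the proof of Theorem \ref{theorem1}; both are valid.
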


\begin{proof}
(i): Since $\mathbf{W}_{\pi}(\mathcal{K_Q})$ is a discrete valuation ring by Lemma \ref{torsionfreenormal}, it is clear that $T_n$ is $\pi$-adically separated and a normal domain.

(ii): Let us note the following facts:
\begin{enumerate}
\item[$\bullet$]
$R_{\infty}[\frac{1}{a_m}] \to T_m[\frac{1}{a_m}]$ is the maximal \'etale extension contained in $\mathbf{W}_{\pi}(\mathcal{K_Q})$. 

\item[$\bullet$]
The ring extension $R_{\infty}[\frac{1}{a_m}] \to (R_{\infty} \cdot S)[\frac{1}{a_m}]$ is \'etale and $(R_{\infty} \cdot S)[\frac{1}{a_m}] \subset \mathbf{W}_{\pi}(\mathcal{K_Q})$.
\end{enumerate}

From these facts, we see that $S$ is contained in $T_{\infty}$. Let us put $(T_{\infty})_{(\pi)}=T_{\infty} \otimes_{R_{\infty}} (R_{\infty})_{(\pi)}$ and $\mathcal{R}=(R_{\infty})_{(\pi)}^{\mathrm{\acute{e}t}}$ (see Definition \ref{crucialdef}). Then there is the following commutative diagram:
$$
\begin{CD}
R_{\infty} @>>> (R_{\infty})_{(\pi)}  @>>> \mathcal{R} \\
@VVV @VVV @VVV \\
T_{\infty} @>>> (T_{\infty})_{(\pi)}  @>>> \mathbf{W}_{\pi}(\mathcal{K_Q}) \\
\end{CD}
$$
in which every map is injective. Now we claim that $T_{\infty} \subset \mathcal{R}$. To prove the claim, let $\mathcal{R}^{\cl}$ be the integral closure of $\mathcal{R}$ in $\mathbf{W}_{\pi}(\mathcal{K_Q})$. Then we have $(T_{\infty})_{(\pi)} \subset \mathcal{R}^{\cl}$ by construction and the above diagram fits into the following commutative diagram:
$$
\begin{CD}
(R_{\infty})_{(\pi)} @>>> \mathcal{R} \\
@VVV @VVV \\
(T_{\infty})_{(\pi)} @>>> \mathcal{R}^{\cl} \\
\end{CD}
$$
Then, since $(R_{\infty})_{(\pi)} \to (T_{\infty})_{(\pi)}$ is an integral ind-\'etale extension, we have $(T_{\infty})_{(\pi)} \subset \mathcal{R}$ in view of the fact that
$(R_{\infty})_{(\pi)} \to \mathcal{R}$ is the maximal \'etale extension contained in $\mathcal{R}^{\cl}$. Hence we have $T_{\infty} \subset \mathcal{R}$ and this implies that $T_{\infty}$ does not depend on the choice of $\mathcal{Q}$.

(iii): For any $n \in \mathbf{Z}$, we have $\mathbf{F}_{\pi}^n(a) \equiv a^{q^n} \pmod{\pi R_{\infty}}$. By using this fact, we see that the horizontal map in the diagram:
$$
\begin{CD}
R_{\infty}/\pi R_{\infty}[\frac{1}{a}] @>>> T_n/\pi T_n[\frac{1}{a}] \\
@|@| \\
R_{\infty}/\pi R_{\infty}[\frac{1}{a_n}] @>>> T_n/\pi T_n[\frac{1}{a_n}]
\end{CD}
$$
is an ind-\'etale extension for all $n \ge 0$ by the \'etale base change. The statement follows from the fact that $T_{\infty}/\pi T_{\infty}[\frac{1}{a}]$ is the filtered colimit of all $T_n/\pi T_n[\frac{1}{a}]$.

(iv): Since $T_{\infty}$ is the filtered colimit of normal module-finite $R$-algebras, the assertion on the regularity of $(\pi,a)$ follows from Serre's normality criterion. By this result, the map $T_{\infty}/\pi T_{\infty} \to T_{\infty}/\pi T_{\infty}[\frac{1}{a}]$ is injective. Then by (iii), $T_{\infty}/\pi T_{\infty}[\frac{1}{a}]$ is reduced and hence its sub-algebra $T_{\infty}/\pi T_{\infty}$ is also reduced.
\end{proof}

We define another $R_{\infty}$-algebra which is larger than $T_{\infty}$.

\begin{definition}
We define $T^{\cl}_{\infty}$ to be the integral closure of $R_{\infty}$ in $\mathbf{W}_{\pi}(\mathcal{K_Q})$.
\end{definition}

We have a chain of inclusions $R_{\infty} \subset T_{\infty} \subset T^{\cl}_{\infty} \subset \mathbf{W}_{\pi}(\mathcal{K_Q})$. Although we do not use the ring $T^{\cl}_{\infty}$ in the present article, let us remark that it shares some properties with $T_{\infty}$.

\begin{lemma}
\label{lemma2}
Let $\mathbf{F}_{\pi}$ be the $q$-Witt-Frobenius map on $\mathbf{W}_{\pi}(\mathcal{K_Q})$. Then the restriction of $\mathbf{F}_{\pi}$ to its sub-algebra $T_{\infty}$ $($resp. $T^{\cl}_{\infty}$$)$ defines a ring automorphism, that is, $\mathbf{F}_{\pi}(T_{\infty})=T_{\infty}$ $($resp. $\mathbf{F}_{\pi}(T_{\infty}^{\cl})=T_{\infty}^{\cl}$$)$. 
\end{lemma}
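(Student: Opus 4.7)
The plan is to establish the stability statements $\mathbf{F}_\pi^{\pm 1}(T_n) \subset T_{n+1}$ for every $n \ge 0$, from which stability of the filtered colimit $T_\infty$ under both $\mathbf{F}_\pi$ and $\mathbf{F}_\pi^{-1}$ follows at once; the statement for $T^{\cl}_\infty$ will then be essentially formal. The key preliminary computation is a divisibility relation in $R_\infty$: since $\mathbf{F}_\pi|_{R_\infty}$ is the $V$-algebra automorphism sending $x_i \mapsto x_i^q$, a reindexing gives
\[
\mathbf{F}_\pi(a_n) = \prod_{k=-n}^{n} \mathbf{F}_\pi^{k+1}(a) = \prod_{j=-n+1}^{n+1} \mathbf{F}_\pi^{j}(a),
\]
so that $a_{n+1} = \mathbf{F}_\pi^{-(n+1)}(a) \cdot \mathbf{F}_\pi^{-n}(a) \cdot \mathbf{F}_\pi(a_n)$. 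In particular $\mathbf{F}_\pi(a_n)$ divides $a_{n+1}$ in $R_\infty$, and the same computation with $\mathbf{F}_\pi^{-1}$ gives the symmetric statement. Hence both $R_\infty[\frac{1}{\mathbf{F}_\pi(a_n)}]$ and $R_\infty[\frac{1}{\mathbf{F}_\pi^{-1}(a_n)}]$ sit inside $R_\infty[\frac{1}{a_{n+1}}]$ as localizations.

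Given $t \in T_n$, I would invoke the maximal \'etale condition to pick a finite \'etale $R_\infty[\frac{1}{a_n}]$-subalgebra $A \subset \mathbf{W}_\pi(\mathcal{K})[\frac{1}{a_n}]$ containing $t$. Applying the $V$-algebra automorphism $\mathbf{F}_\pi$ of $\mathbf{W}_\pi(\mathcal{K})$ transports this to a finite \'etale $R_\infty[\frac{1}{\mathbf{F}_\pi(a_n)}]$-subalgebra $\mathbf{F}_\pi(A) \subset \mathbf{W}_\pi(\mathcal{K})[\frac{1}{\mathbf{F}_\pi(a_n)}]$ containing $\mathbf{F}_\pi(t)$. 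Base changing along the localization $R_\infty[\frac{1}{\mathbf{F}_\pi(a_n)}] \hookrightarrow R_\infty[\frac{1}{a_{n+1}}]$ from the previous paragraph then yields a finite \'etale $R_\infty[\frac{1}{a_{n+1}}]$-subalgebra of $\mathbf{W}_\pi(\mathcal{K})[\frac{1}{a_{n+1}}]$ that still contains $\mathbf{F}_\pi(t)$. By the maximality defining $T_{n+1}[\frac{1}{a_{n+1}}]$, this subalgebra is contained in $T_{n+1}[\frac{1}{a_{n+1}}]$, so $\mathbf{F}_\pi(t) \in T_{n+1}[\frac{1}{a_{n+1}}]$.

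To upgrade this to membership in $T_{n+1}$ itself, I would note that $\mathbf{F}_\pi(t) \in \mathbf{W}_\pi(\mathcal{K})$ is integral over $\mathbf{F}_\pi(R_\infty) = R_\infty$, hence integral over $T_{n+1}$. Since $T_{n+1}$ is a normal domain by Lemma \ref{lemma1}(i) and $\mathbf{F}_\pi(t)$ sits inside $T_{n+1}[\frac{1}{a_{n+1}}] \subset \Frac(T_{n+1})$, normality forces $\mathbf{F}_\pi(t) \in T_{n+1}$. Running the same argument with $\mathbf{F}_\pi^{-1}$ in place of $\mathbf{F}_\pi$ gives $\mathbf{F}_\pi^{-1}(T_n) \subset T_{n+1}$. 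Passing to the filtered colimit in $n$ yields $\mathbf{F}_\pi^{\pm 1}(T_\infty) \subset T_\infty$, so $\mathbf{F}_\pi|_{T_\infty}$ is a ring automorphism. For $T^{\cl}_\infty$ the claim is essentially formal: $\mathbf{F}_\pi$ is an automorphism of $\mathbf{W}_\pi(\mathcal{K})$ stabilizing $R_\infty$, so it permutes the set of elements of $\mathbf{W}_\pi(\mathcal{K})$ integral over $R_\infty$, giving $\mathbf{F}_\pi(T^{\cl}_\infty) = T^{\cl}_\infty$.

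The step I expect to be the main obstacle is the bookkeeping around the base change: verifying carefully that $\mathbf{F}_\pi(A) \otimes_{R_\infty[1/\mathbf{F}_\pi(a_n)]} R_\infty[\frac{1}{a_{n+1}}]$ embeds as a finite \'etale subalgebra of $\mathbf{W}_\pi(\mathcal{K})[\frac{1}{a_{n+1}}]$, so that maximality actually applies at level $n+1$. Once this is in hand, the normality upgrade in the final paragraph is routine, and passage to the colimit is immediate.
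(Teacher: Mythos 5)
Your proposal is correct and follows essentially the same route as the paper: both rest on the reindexing identity $a_{n+1} = \mathbf{F}_\pi^{-(n+1)}(a)\cdot\mathbf{F}_\pi^{-n}(a)\cdot\mathbf{F}_\pi(a_n)$ (and its $\mathbf{F}_\pi^{-1}$-symmetric twin), transporting the ind-\'etale tower at level $n$ by $\mathbf{F}_\pi^{\pm 1}$ and invoking maximality at level $n+1$, then passing to the colimit, with the $T_\infty^{\cl}$ case handled by the formal observation that $\mathbf{F}_\pi$ permutes the integral closure of $R_\infty$ in $\mathbf{W}_\pi(\mathcal{K})$. The one place you go beyond the paper is in making explicit the descent from $\mathbf{F}_\pi(t)\in T_{n+1}[\frac{1}{a_{n+1}}]$ to $\mathbf{F}_\pi(t)\in T_{n+1}$ via integrality over $R_\infty$ and normality of $T_{n+1}$ (Lemma \ref{lemma1}(i)); the paper leaves this step implicit, and your filling it in is a genuine improvement in rigor rather than a divergence in method.
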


\begin{proof}
Consider the commutative diagram:
$$
\begin{CD}
R_{\infty} @>>> T_{\infty} @>>> \mathbf{W}_{\pi}(\mathcal{K_Q}) \\
@V\mathbf{F}_{\pi}V\wr V @V\mathbf{F}_{\pi}V\wr V @V\mathbf{F}_{\pi}V\wr V \\
R_{\infty} @>>> \mathbf{F}_{\pi}(T_{\infty}) @>>> \mathbf{W}_{\pi}(\mathcal{K_Q}) \\
\end{CD}
$$
Then since $T_{\infty}$ is the colimit of all $T_n$, it suffices to show that $\mathbf{F}_{\pi}(T_n) \subset T_{\infty}$ and $\mathbf{F}_{\pi}^{-1}(T_n) \subset T_{\infty}$ for all $n>0$ to prove the lemma. By definition, the integral extension
$$
R_{\infty}[\frac{1}{a_n}] \to T_n[\frac{1}{a_n}]
$$
is ind-\'etale. Applying the Witt-Frobenius map, the horizontal maps in the following diagram:
$$
\begin{CD}
R_{\infty}[\frac{1}{a_n}] @>>> T_n[\frac{1}{a_n}] \\
@V\mathbf{F}_{\pi}V\wr V @V\mathbf{F}_{\pi}V\wr V \\
R_{\infty}[\frac{1}{\mathbf{F}_{\pi}(a_n)}] @>>> \mathbf{F}_{\pi}(T_n)[\frac{1}{\mathbf{F}_{\pi}(a_n)}] \\
\end{CD}
$$
are ind-\'etale. Since there is an equality: $\mathbf{F}_{\pi}(a_n) \cdot \mathbf{F}_{\pi}^{-n}(a) \cdot \mathbf{F}_{\pi}^{-(n+1)}(a)=a_{n+1}$, we have
$$
\begin{CD}
R_{\infty}[\frac{1}{\mathbf{F}_{\pi}(a_n)}][\frac{1}{\mathbf{F}_{\pi}^{-n}(a) \cdot \mathbf{F}_{\pi}^{-(n+1)}(a)}] @>>> \mathbf{F}_{\pi}(T_n)[\frac{1}{\mathbf{F}_{\pi}(a_n)}][\frac{1}{\mathbf{F}_{\pi}^{-n}(a) \cdot \mathbf{F}_{\pi}^{-(n+1)}(a)}] \\
@| @| \\
R_{\infty}[\frac{1}{a_{n+1}}] @>>> \mathbf{F}_{\pi}(T_n)[\frac{1}{a_{n+1}}] \\
\end{CD}
$$
So in view of the fact that $R_{\infty}[\frac{1}{a_{n+1}}] \to T_{n+1}[\frac{1}{a_{n+1}}]$ is the maximal \'etale extension contained in $\mathbf{W}_{\pi}(\mathcal{K_Q})$, we have
$$
\mathbf{F}_{\pi}(T_n) \subset T_{n+1},
$$
and hence $\mathbf{F}_{\pi}(T_n) \subset T_{\infty}$. The same type of reasoning using an equality: $\mathbf{F}^{-1}_{\pi}(a_n) \cdot \mathbf{F}_{\pi}^{n}(a) \cdot \mathbf{F}_{\pi}^{(n+1)}(a)=a_{n+1}$ yields the other inclusion: $\mathbf{F}_{\pi}^{-1}(T_n) \subset T_{\infty}$. 

By a similar argument, we can establish an equality: $\mathbf{F}_{\pi}(T_{\infty}^{\cl})=T_{\infty}^{\cl}$ by utilizing the following commutative diagram:
$$
\begin{CD}
R_{\infty} @>>> T_{\infty}^{\cl} @>>> \mathbf{W}_{\pi}(\mathcal{K_Q}) \\
@V\mathbf{F}_{\pi}V\wr V @V\mathbf{F}_{\pi}V\wr V @V\mathbf{F}_{\pi}V\wr V \\
R_{\infty} @>>> \mathbf{F}(T_{\infty}^{\cl}) @>>> \mathbf{W}_{\pi}(\mathcal{K_Q}) \\
\end{CD}
$$
Indeed, we see from this diagram that $R_{\infty} \to \mathbf{F}_{\pi}(T_{\infty}^{\cl})$ is an integral extension, so that we have $\mathbf{F}_{\pi}(T_{\infty}^{\cl}) \subset T_{\infty}^{\cl}$ and likewise, $\mathbf{F}_{\pi}^{-1}(T_{\infty}^{\cl}) \subset T_{\infty}^{\cl}$.
\end{proof}

\begin{example}
\label{crucialexample}
This example gives the failure of the stability of the Witt-Frobenius map. We set
$$
R=\mathbf{W}(k)[[x]]
$$
for a perfect field $k$ of characteristic $p>2$, 
$$
R_{\infty}=\bigcup_{n>0} \mathbf{W}(k)[[x^{p^{-n}}]]
$$
and a normal domain: 
$$
\mathcal{S}=R_{\infty}[\sqrt{p+x}] \subset R^+.
$$ 
Then $R_{\infty} \to \mathcal{S}$ is a finite extension of degree 2 and the induced map $\mathcal{K_Q}=(R_{\infty})_{(p)}/p(R_{\infty})_{(p)} \to \mathcal{L}=\mathcal{S}_{(p)}/p\mathcal{S}_{(p)}$ is a finite extension of perfect fields. After taking the ring of Witt vectors, we get a commutative diagram:
$$
\begin{CD}
\mathcal{S} @>\hookrightarrow>> \mathcal{S}_{(p)} @>\hookrightarrow>> \mathcal{S}_{(p)}^{\wedge} \cong \mathbf{W}(\mathcal{L}) \\
@AAA @AAA @AAA \\
R_{\infty} @>\hookrightarrow>> (R_{\infty})_{(p)} @>\hookrightarrow>> (R_{\infty})_{(p)}^{\wedge} \cong \mathbf{W}(\mathcal{K_Q}) \\
\end{CD}
$$
Let $\mathbf{F}$ be the $p$-Witt-Frobenius map on $\mathcal{S}_{(p)}^{\wedge}$. Since we have $\mathbf{F}(p+x)=p+x^p$, it follows that 
$$
\mathbf{F}(\sqrt{p+x})=\pm\sqrt{p+x^p} \in \mathcal{S}_{(p)}^{\wedge}
$$
and hence $\mathbf{F}(\sqrt{p+x}) \notin \mathcal{S}$. The failure of the stability of $\mathbf{F}$ seems to be a major obstacle for constructing a ring of mixed characteristic with a perfect quotient modulo a non-zero divisor.
\end{example}

\section{Main theorem}
\label{sec8}
We will prove the main theorem.

\begin{theorem}
\label{theorem1}
Let $S$ be a complete local domain of mixed characteristic $p>0$ with finite residue field. Then there exists an $S$-algebra $T$ with a non-zero non-unit element $\pi \in T$ such that the following conditions hold:

\begin{enumerate}
\item[$\mathrm{(i)}$]
$T$ is a normal domain and $S \subset T \subset S^+$. 

\item[$\mathrm{(ii)}$]
$T/\pi T$ is a reduced $\mathbb{F}_p$-algebra.

\item[$\mathrm{(iii)}$]
For any prime ideal $P$ of $T$ that is minimal over $\pi T$, the Frobenius endomorphism is bijective on the quotient ring $T/P$.
\end{enumerate}
\end{theorem}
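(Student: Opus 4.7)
The plan is to construct $T$ as the ring $T_\infty$ of Definition \ref{basicring}, applied to a well-chosen module-finite extension of complete local normal domains containing $S$, obtained via Gabber's refinement of Cohen's structure theorem. Assume $\dim S \ge 2$; the case $\dim S = 1$ is immediate by taking $T = S$ with $\pi$ a uniformizer of $S$. First, apply Theorem \ref{gabber} to $S$ to produce a module-finite normal extension $S \hookrightarrow C$ whose residue field $k'$ is a finite extension of that of $S$ (hence itself finite), a totally ramified DVR extension $\mathbf{W}(k') \hookrightarrow V$ with uniformizer $\pi$, and a module-finite extension $R := V[[t_2,\ldots,t_d]] \to C$ which becomes \'etale after inverting $\pi$. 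The finiteness hypothesis $(\mathbf{Fin})$ of \S~\ref{sec4} thus holds. Pick $a \in R$ with $\Ht(\pi,a) = 2$ such that $R[\tfrac{1}{a}] \to C[\tfrac{1}{a}]$ is \'etale.

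With this data, form $R_\infty$ (Definition \ref{bigbasic}), the maximal \'etale extension $\mathcal{R}$ with chosen maximal ideal $\mathcal{Q}$ and residue field $\mathcal{K}$ (Definition \ref{crucialdef}), and the ring $T_\infty \subset \mathbf{W}_\pi(\mathcal{K})$ (Definition \ref{basicring}). I claim $T := T_\infty$ satisfies the three conditions. To place $S$ inside $T_\infty$, fix compatible embeddings $C \hookrightarrow C^+$ and $R_\infty \hookrightarrow C^+$ extending $R \hookrightarrow C$. Each subring $C \cdot R_n \subset C^+$ is a domain, module-finite over $R_n$, whose base change to $R_n[\tfrac{1}{a}]$ is \'etale (the quotient map from the \'etale ring $(C \otimes_R R_n)[\tfrac{1}{a}]$ onto $(C \cdot R_n)[\tfrac{1}{a}]$ is a localization at an idempotent singling out a component). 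Localizing at $\pi$, where $a$ is already a unit, therefore makes $(C \cdot R_n)_{(\pi)}$ a module-finite \'etale extension of $(R_n)_{(\pi)}$, which embeds into $\mathcal{R}$ after the further base change to $(R_\infty)_{(\pi)}$, and hence into $\mathbf{W}_\pi(\mathcal{K})$. Applying Lemma \ref{lemma1}(ii) gives $C \cdot R_n \subset T_\infty$ for every $n$, so $S \subset C \subset T_\infty$; the reverse inclusion $T_\infty \subset S^+$ follows from the integrality of $T_\infty$ over $R \subset S$ inside a fixed algebraic closure of $\Frac(S)$. Condition (i) is then Lemma \ref{lemma1}(i), and (ii) is Lemma \ref{lemma1}(iv) combined with $p \in \pi V$.

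The heart of the argument is (iii). Fix a minimal prime $P$ of $\pi T_\infty$. Lying-over for the integral extension $T_\infty \hookrightarrow \mathcal{R}$ produces a prime $\mathcal{Q}_P \subset \mathcal{R}$ with $\mathcal{Q}_P \cap T_\infty = P$; since $\pi \in P \subset \mathcal{Q}_P$ and $\mathcal{R}$ has Krull dimension one, $\mathcal{Q}_P$ is a maximal ideal. Replacing $\mathcal{Q}$ in Definition \ref{crucialdef} by $\mathcal{Q}_P$ leaves $T_\infty$ unchanged by Lemma \ref{lemma1}(ii), but re-embeds $T_\infty$ into a copy of $\mathbf{W}_\pi(\mathcal{K})$ adapted to $P$. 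Both $\mathcal{R}_{\mathcal{Q}_P}$ and its $\pi$-adic completion $\mathbf{W}_\pi(\mathcal{K})$ are DVRs with uniformizer $\pi$, so contracting the maximal ideal yields
\[
\pi\mathbf{W}_\pi(\mathcal{K}) \cap T_\infty \;=\; \pi\mathcal{R}_{\mathcal{Q}_P} \cap T_\infty \;=\; \mathcal{Q}_P\mathcal{R}_{\mathcal{Q}_P} \cap T_\infty \;=\; \mathcal{Q}_P \cap T_\infty \;=\; P.
\]
By Lemma \ref{lemma2}, $\mathbf{F}_\pi$ restricts to an automorphism of $T_\infty$. Since $\mathbf{F}_\pi(\pi) = \pi$, it preserves $\pi\mathbf{W}_\pi(\mathcal{K}) \cap T_\infty = P$ and descends to an automorphism $\overline{\mathbf{F}}_\pi$ of $T_\infty/P$. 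By Proposition \ref{almostCM}(ii), $\mathbf{F}_\pi(t) - t^q \in \pi\mathbf{W}_\pi(\mathcal{K}) \cap T_\infty = P$ for every $t \in T_\infty$, so $\overline{\mathbf{F}}_\pi$ coincides with the $q$-th power map on $T_\infty/P$. Bijectivity of $\mathbf{F}_\pi$ forces $\Frob^e$ to be a bijection on the domain $T_\infty/P$; injectivity of $\Frob$ is automatic on a characteristic-$p$ domain, and surjectivity of $\Frob^e$ implies surjectivity of $\Frob$, finishing (iii).

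The principal subtlety is that $\pi\mathbf{W}_\pi(\mathcal{K}) \cap T_\infty$ need not equal $\pi T_\infty$: generically this intersection is precisely the prime $P$ singled out by the choice of $\mathcal{Q}$, so $\overline{\mathbf{F}}_\pi$ does not realise the $q$-th power map on the full quotient $T_\infty/\pi T_\infty$. The argument therefore has to be carried out prime-by-prime, with the embedding $T_\infty \hookrightarrow \mathbf{W}_\pi(\mathcal{K})$ tuned via lying-over to the given $P$. The independence of $T_\infty$ from $\mathcal{Q}$ recorded in Lemma \ref{lemma1}(ii) is exactly what legitimises this re-tuning.
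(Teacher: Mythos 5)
Your proposal is correct and follows essentially the same route as the paper: Gabber's theorem to reduce to the setting of Definition \ref{basicring}, taking $T=T_{\infty}$, Lemma \ref{lemma1} for (i)--(ii), and Lemma \ref{lemma2} together with the $\mathcal{Q}$-independence of $T_{\infty}$ to realize each minimal prime $P$ of $\pi T_{\infty}$ as $\pi\mathbf{W}_{\pi}(\mathcal{K}) \cap T_{\infty}$ for a suitably chosen maximal ideal of $\mathcal{R}$, whence the induced map on $T_{\infty}/P$ is the bijective $q$-th power map. The only nitpick is that $T_{\infty} \hookrightarrow \mathcal{R}$ is not itself integral (only $(T_{\infty})_{(\pi)} \to \mathcal{R}$ is), so the lying-over step should be applied after localizing at $\pi$, exactly as the paper does.
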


\begin{proof}
After replacing $S$ with a larger module-finite domain $S'$, there exists a torsion free module-finite map 
$$
R=V[[t_2,\ldots,t_d]] \to S'
$$ 
as stated in Theorem \ref{gabber} (Cohen-Gabber theorem). More concretely, $S'$ is a local normal domain and $R[\frac{1}{a}] \to S'[\frac{1}{a}]$ is \'etale for some $a \in R$ with $\Ht(\pi,a)=2$. Then we can construct an $R_{\infty}$-algebra $T_{\infty}$ associated to the module-finite extension $R \to S'$, where $T_{\infty}$ is as in Definition \ref{basicring}. That is, we have $S \subset S' \subset T_{\infty}$. Hence it suffices to construct an $S'$-algebra $T$, as required in the theorem. By replacing $S$ with $S'$, we may assume that $S$ fits into the set up of Cohen-Gabber theorem. Let $q^e=|\mathbb{F}|$ with $\mathbb{F}=V/\pi V$. 

Under the notation as above, we prove that $T:=T_{\infty}$ satisfies all requirements in the theorem. As to (i) and (ii), one just applies Lemma \ref{lemma1}. So it remains to prove (iii). Let $\mathbf{F}_{\pi}$ be the $q$-Witt-Frobenius map on $\mathbf{W}_{\pi}(\mathcal{K_Q})$. According to Lemma \ref{lemma2}, we have $\mathbf{F}_{\pi}(T_{\infty})=T_{\infty}$ and we have the following commutative diagram:
$$
\begin{CD}
T_{\infty} @>\hookrightarrow>> \mathbf{W}_{\pi}(\mathcal{K_Q}) @>>> \mathcal{K_Q} \\
@V\mathbf{F}_{\pi}V\wr V @V\mathbf{F}_{\pi}V\wr V @V\Frob^eV\wr V \\
T_{\infty} @>\hookrightarrow>> \mathbf{W}_{\pi}(\mathcal{K_Q}) @>>> \mathcal{K_Q} \\
\end{CD}
$$
Let $P:=T_{\infty} \cap \pi\mathbf{W}_{\pi}(\mathcal{K_Q})$. Then $P$ is a prime ideal that is the kernel of the composite ring map $T_{\infty} \to \mathbf{W}_{\pi}(\mathcal{K_Q}) \to \mathcal{K_Q}$. Moreover, we have $\mathbf{F}_{\pi}(b) \in T_{\infty}$ and $\mathbf{F}_{\pi}(b)-b^q \in P$ for any $b \in T_{\infty}$. By the commutativity of the above diagram, it follows that $\mathbf{F}_{\pi}(P)=P$. Hence we have the commutative diagram:
$$
\begin{CD}
T_{\infty}/P @>\hookrightarrow>> \mathcal{K_Q} \\
@V\overline{\mathbf{F}}_{\pi}V\wr V @V\Frob^eV\wr V \\
T_{\infty}/P @>\hookrightarrow>> \mathcal{K_Q} \\
\end{CD}
$$
where $\overline{\mathbf{F}}_{\pi}$ is the $q$-th power map. Since the $q$-th power map is bijective on $T_{\infty}/P$, we see that $T_{\infty}/P$ is a perfect $\mathbb{F}_p$-algebra.

Next we prove that $P$ is minimal over $\pi T_{\infty}$. Recall that there is a ring map $R_{\infty} \to T_{\infty} \to \mathbf{W}_{\pi}(\mathcal{K_Q})$ and it gives $\pi R_{\infty}=R_{\infty} \cap \pi \mathbf{W}_{\pi}(\mathcal{K_Q})$, because there is a chain of injections:
$$
R_{\infty}/\pi R_{\infty} \hookrightarrow (R_{\infty})_{(\pi)}/\pi (R_{\infty})_{(\pi)} \hookrightarrow \mathcal{K_Q}=\mathcal{R}_{\mathcal{Q}}/\pi \mathcal{R}_{\mathcal{Q}},
$$
where the second map is a field extension, $\mathcal{R}:=(R_{\infty})_{(\pi)}^{\mathrm{\acute{e}t}}$ is the maximal \'etale extension of the discrete valuation ring $(R_{\infty})_{(\pi)}$, and $\mathcal{Q}$ is a maximal ideal of $\mathcal{R}$. We have $\pi R_{\infty}=R_{\infty} \cap \pi \mathbf{W}_{\pi}(\mathcal{K_Q})$, $P=T_{\infty} \cap \pi\mathbf{W}_{\pi}(\mathcal{K_Q})$ and $R_{\infty} \hookrightarrow T_{\infty}$ is an integral extension of domains. Thus, it follows that $P$ is a minimal prime over $\pi T_{\infty}$. 

Finally, we prove that $P$ may be taken to be any prime ideal that is minimal over $\pi T_{\infty}$. By Lemma \ref{lemma1}, we have $T_{\infty} \subset \mathcal{R}$. Note that $(T_{\infty})_{(\pi)} \to \mathcal{R}$ is an integral extension of normal domains of Krull dimension one. Then the set of prime ideals of $T_{\infty}$ that are minimal over $\pi T_{\infty}$ corresponds bijectively with the set of maximal ideals of $(T_{\infty})_{(\pi)}$. Any maximal ideal of $(T_{\infty})_{(\pi)}$ is obtained as the pull back of some maximal ideal of $\mathcal{R}$ by Lying-Over Theorem \cite[Theorem 2.2.2]{SwHu}. Therefore, for any such prime ideal $P \subset T_{\infty}$, one can find a maximal ideal $\mathcal{Q}$ of $\mathcal{R}$ such that $P=T_{\infty} \cap \mathcal{Q}$ under the composite map $T_{\infty} \to (T_{\infty})_{(\pi)} \to \mathcal{R}$. Since the construction of $\mathbf{W}_{\pi}(\mathcal{K_Q})$ is valid for any maximal ideal $\mathcal{Q}$ of $\mathcal{R}$, it follows that $P$ can be chosen to be an arbitrary prime ideal that is minimal over $\pi T_{\infty}$. Hence $T:=T_{\infty}$ has all desired properties.
\end{proof}

Fix a finite field $\mathbb{F}$ of characteristic $p>0$ and let $\mathbf{W}(\mathbb{F}) \to (V,\pi,\mathbb{F})$ be a totally ramified extension of discrete valuation rings. Then we have the following corollary.

\begin{corollary}
\label{theorem2}
Assume that $R:=V[[x_2,\ldots,x_d]] \to S$ is a module-finite extension of complete local domains such that $R[\frac{1}{a}] \to S[\frac{1}{a}]$ is \'etale for some $a \in R$ and the height of the ideal $(\pi,a)$ of $R$ is 2. Then the $S$-algebra $T$ in Theorem \ref{theorem1} can be taken to satisfy the following properties:
\begin{enumerate}
\item[$\mathrm{(i)}$]
There is a commutative diagram of integral domains
$$
\begin{CD}
R_{\infty} @>>> T \\
@AAA @AAA \\
R @>>> S \\
\end{CD}
$$
where each map is injective and integral. Moreover, the ring map
$$
R_{\infty}/\pi R_{\infty}[\frac{1}{a}] \to T/\pi T[\frac{1}{a}],
$$
which is induced by $R_{\infty} \to T$, is the filtered colimit of finite \'etale $R_{\infty}/\pi R_{\infty}[\frac{1}{a}]$-algebras and the Frobenius endomorphism is bijective on $T/\pi T[\frac{1}{a}]$.

\item[$\mathrm{(ii)}$]
Fix a prime ideal $P$ of $T$ that is minimal over $\pi T$. Then there exists a ring automorphism:
$$
\mathbf{F}:T \xrightarrow{\sim} T
$$
such that $\mathbf{F}(P)=P$ and the induced map $\overline{\mathbf{F}}:T/P \xrightarrow{\sim} T/P$ coincides with the $q$-th power map with $q:=|\mathbb{F}|$ and $\mathbb{F}=V/\pi V$.
\end{enumerate}
\end{corollary}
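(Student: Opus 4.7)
The plan is to take $T := T_{\infty}$, where $T_{\infty}$ is the $R_{\infty}$-algebra from Definition \ref{basicring} built out of the data $R \to S$ and $a \in R$ already supplied by the hypotheses of the corollary. These hypotheses match the set-up of Section \ref{sec6} verbatim, so the construction goes through without any further appeal to Gabber's theorem; the only task is to verify the two conclusions.

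For (i), I would directly invoke Lemma \ref{lemma1}(iii), which states that $R_{\infty}/\pi R_{\infty}[\frac{1}{a}] \to T_{\infty}/\pi T_{\infty}[\frac{1}{a}]$ is ind-\'etale. By definition this extension is a filtered colimit of finite \'etale algebras. The perfectness of $T_{\infty}/\pi T_{\infty}[\frac{1}{a}]$ then follows because $R_{\infty}/\pi R_{\infty}$ is the perfect closure of $R/\pi R$ (immediate from $R_n = V[[x_2^{p^{-n}},\ldots,x_d^{p^{-n}}]]$) and because ind-\'etale extensions of perfect $\mathbb{F}_p$-algebras are perfect, as recorded in Remark \ref{Frobeniusetale}. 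Since $q = p^e$, bijectivity of Frobenius is equivalent to bijectivity of the $q$-th power map, which is what we need.

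For (ii), the desired automorphism will be $\mathbf{F} := \mathbf{F}_{\pi}|_{T_{\infty}}$, the restriction of the $q$-Witt-Frobenius; Lemma \ref{lemma2} guarantees that this is indeed a ring automorphism of $T_{\infty}$. The delicate point is ensuring that $\mathbf{F}$ can be made to preserve the given prime $P$. Here I would repeat the final argument of the proof of Theorem \ref{theorem1}: the inclusion $T_{\infty} \subset \mathcal{R}$, combined with Lying-Over applied to the integral map $(T_{\infty})_{(\pi)} \to \mathcal{R}$ of one-dimensional normal domains, sets up a bijection between primes of $T_{\infty}$ minimal over $\pi T_{\infty}$ and maximal ideals of $\mathcal{R}$. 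Choosing $\mathcal{Q}$ so that $P = T_{\infty} \cap \mathcal{Q}$ and forming $\mathcal{K} = \mathcal{R}_{\mathcal{Q}}/\pi \mathcal{R}_{\mathcal{Q}}$ and $\mathbf{W}_{\pi}(\mathcal{K})$ accordingly yields an embedding $T_{\infty} \hookrightarrow \mathbf{W}_{\pi}(\mathcal{K})$ with $P = T_{\infty} \cap \pi \mathbf{W}_{\pi}(\mathcal{K})$. Since $\mathbf{F}_{\pi}(\pi) = \pi$, the ideal $\pi \mathbf{W}_{\pi}(\mathcal{K})$ is $\mathbf{F}_{\pi}$-stable; intersecting with $T_{\infty}$ gives $\mathbf{F}(P) = P$. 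The commutative square
$$
\begin{CD}
T_{\infty}/P @>\hookrightarrow>> \mathcal{K} \\
@V\overline{\mathbf{F}}VV @V\Frob^e VV \\
T_{\infty}/P @>\hookrightarrow>> \mathcal{K}
\end{CD}
$$
already displayed in the proof of Theorem \ref{theorem1} then identifies $\overline{\mathbf{F}}$ with the $q$-th power map on $T_{\infty}/P$.

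The step I expect to require the most care is the bijective matching between minimal primes of $\pi T_{\infty}$ and maximal ideals of $\mathcal{R}$: this is what makes the construction of the embedding $T_{\infty} \hookrightarrow \mathbf{W}_{\pi}(\mathcal{K})$ sufficiently flexible to accommodate an arbitrary prescribed $P$. Once that flexibility is in hand, everything else is a direct reading-off from Lemma \ref{lemma1}, Lemma \ref{lemma2}, and the diagrams in the proof of Theorem \ref{theorem1}.
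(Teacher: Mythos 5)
Your proposal is correct and follows essentially the same route as the paper's proof: the paper also sets $T := T_{\infty}$, cites Lemma \ref{lemma1} for (i), and observes that (ii) is obtained by restricting $\mathbf{F}_{\pi}$ to $T_{\infty}$ via Lemma \ref{lemma2} and the final argument in the proof of Theorem \ref{theorem1}. You simply spell out the details that the paper's one-line proof defers to the lemmas and to Theorem \ref{theorem1}.
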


\begin{proof}
Let us take $T:=T_{\infty}$ as in Theorem \ref{theorem1}. Then the first statement is due to Lemma \ref{lemma1}. For the second statement, the desired ring automorphism $\mathbf{F}:T \xrightarrow{\sim} T$ is given as the restriction of the $q$-Witt-Frobenius map $\mathbf{F}_{\pi}$ from $\mathbf{W}_{\pi}(\mathcal{K_Q})$ to the sub-algebra $T_{\infty}$. Indeed, the corollary follows from Theorem \ref{theorem1}.
\end{proof}

It will be worth investigating the following question.

\begin{question}
Let the $R_{\infty}$-algbera $T$ be as in Theorem \ref{theorem1}. Then it is true that $T/\pi T$ is a perfect $\mathbb{F}_p$-algebra?
\end{question}

\begin{remark}
\begin{enumerate}
\item
Let us consider the special case where $R=V[[x_2,\ldots,x_d]] \to S$ is ramified along the normal crossing divisor of $\Spec R$. Put
$$
a=a_{\Lambda}:=\prod_{i \in \Lambda} x_i \in R
$$
for a non-empty subset $\Lambda \subset \{2,\ldots,d\}$ in Definition \ref{basicring}. Assume that $R[\frac{1}{pa}] \to S[\frac{1}{pa}]$ is \'etale. Then the following result holds: Let us define
$$
R_n:=V[[x_2^{\frac{1}{n!}},\ldots,x_d^{\frac{1}{n!}}]]
$$
and let $S_n$ be the normalization of $(R_n \otimes_R S)_{\rm{red}}$ in its total ring of fractions. Then $R_n[\frac{1}{p}] \to S_n[\frac{1}{p}]$ is \'etale for $n \gg 0$ by Abhyankar's lemma \cite[XIII, Prop 5.2, Cor 5.3]{GroRay}.

\item
$T_{\infty}$ is a strictly henselian quasi-local normal domain. Indeed, since $R=V[[x_2,\ldots,x_d]]$ is a complete local domain and $T_{\infty}$ is its integral extension domain, $T_{\infty}$ is a henselian quasi-local domain. Moreover, let $\mathbb{F} \to \mathbb{F}'$ be a finite field extension. Then there exists a finite \'etale extension $V \to W$ of complete discrete valuations rings whose residue field extension is $\mathbb{F} \to \mathbb{F}'$. Hence $R \to W[[x_2,\ldots,x_d]]$ is finite \'etale and $W[[x_2,\ldots,x_d]] \subset T_{\infty}$. In other words, the residue field of $T_{\infty}$ is separably (algebraically) closed.

\item
$\Frac(R_{\infty}) \to \Frac(T_{\infty})$ is a Galois extension, which can be checked in view of the definition of $\mathcal{R}$ and Lemma \ref{maxetale}. However, it is not true that $\Frac(R_{\infty}) \to \Frac(T_{\infty}^{\cl})$ is Galois. This is already apparent in the construction of the henselization of the discrete valuation ring $V$ as a sub-algebra of the completion $\widehat{V}$ \cite[Lemma 6.2.5]{GR} and a reference given in its proof.
\end{enumerate}
\end{remark}

\section{Construction of semiperfect algebras}
\label{sec9}

The main result in this section is obtained by allowing a deep ramification over $p$ for a ring with mixed characteristic $p>0$, which enables us to prove the surjectivity of the Frobenius map. Indeed, we construct an algebra which contains roots of the equation $X^n-p=0$ for any $n \in \mathbb{N}$. It has applications to the construction of big Cohen-Macaulay algebras via Fontaine rings (see \cite{Rob} and \cite{Shim1} and Remark \ref{Fontainering} below). We first prove a crucial lemma.

\begin{lemma}
\label{semiperfect}
Let $A$ be a $p$-torsion free ring such that $A/pA \ne 0$ for a prime integer $p>0$. Assume that $A$ is either a $p$-adically complete normal domain, or a henselian quasi-local normal domain. Define $\overline{A}$ to be a unique $A$-algebra such that $A \subset \overline{A} \subset A^+$ and the localization map:
$$
A[\frac{1}{p}] \to \overline{A}[\frac{1}{p}]
$$
is the maximal \'etale extension. Then there exists an element $\pi \in \overline{A}$ such that $\pi^p=p$. Moreover, $\overline{A}$ is a normal domain, the Frobenius endomorphism is surjective on $\overline{A}/p\overline{A}$ and there is a ring isomorphism:
$$
\overline{A}/\pi \overline{A} \cong \overline{A}/p\overline{A}
$$
which is defined by $x \pmod{\pi \overline{A}} \mapsto x^p \pmod{p\overline{A}}$.
\end{lemma}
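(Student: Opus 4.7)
The plan is to proceed in four stages: exhibit $\pi$, prove that $\overline{A}$ is a normal domain, set up the candidate map $\varphi: \overline{A}/\pi\overline{A} \to \overline{A}/p\overline{A}$ sending $\bar x$ to $\overline{x^p}$, and finally prove $\varphi$ is an isomorphism (which in particular delivers the surjectivity of Frobenius on $\overline{A}/p\overline{A}$).

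For the first two claims, pick any root $\pi \in A^+$ of $t^p - p$. Because $\pi^p = p$ is a unit in $A[\frac{1}{p}]$, the element $\pi$ itself is a unit in the finite $A[\frac{1}{p}]$-algebra $A[\frac{1}{p}][\pi]$, so the derivative $pt^{p-1}\big|_{t=\pi} = p^2/\pi$ is a unit. Hence $A[\frac{1}{p}] \to A[\frac{1}{p}][\pi]$ is finite \'etale (passing to the appropriate idempotent factor of $A[\frac{1}{p}][t]/(t^p - p)$ if necessary), and by the maximality property of $\overline{A}[\frac{1}{p}]$ in $A^+[\frac{1}{p}]$ we obtain $\pi \in \overline{A}[\frac{1}{p}]$; combined with $\pi \in A^+$ this gives $\pi \in \overline{A} = A^+ \cap \overline{A}[\frac{1}{p}]$. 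For normality, $\overline{A}$ is a subring of the domain $A^+$ and hence a domain, while $\overline{A}[\frac{1}{p}]$ is ind-\'etale over the normal domain $A[\frac{1}{p}]$ and hence itself normal (\cite{SwHu}, Proposition~19.3.1). Any $z \in \Frac(\overline{A})$ integral over $\overline{A}$ is then integral over $A$ (so $z \in A^+$) and lies in $\overline{A}[\frac{1}{p}]$ by its normality, forcing $z \in A^+ \cap \overline{A}[\frac{1}{p}] = \overline{A}$.

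The map $\varphi$ is well defined because $\pi^p = p$: writing $x' = x + \pi z$ and expanding binomially, $(x')^p - x^p = \sum_{k=1}^{p-1} \binom{p}{k} x^{p-k}(\pi z)^k + \pi^p z^p$, and every term lies in $p\overline{A}$ (either $p \mid \binom{p}{k}$ for $1 \le k \le p-1$, or $\pi^p = p$ for the final term). It is a ring homomorphism since the target $\overline{A}/p\overline{A}$ has characteristic $p$ (freshman's dream applied to both addition and multiplication). Injectivity follows from normality: if $x^p \in p\overline{A} = \pi^p\overline{A}$, then $(x/\pi)^p \in \overline{A}$, so $x/\pi \in \Frac(\overline{A})$ is integral over $\overline{A}$ and hence in $\overline{A}$, giving $x \in \pi\overline{A}$.

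The main step is surjectivity of $\varphi$, equivalently the surjectivity of Frobenius on $\overline{A}/p\overline{A}$. Given $y \in \overline{A}$, the strategy is to find $y' \in \overline{A}$ with $y' \equiv y \pmod{p\overline{A}}$ and $y'$ a unit in $\overline{A}[\frac{1}{p}]$. Granting this, set $\alpha = (y')^{1/p} \in A^+$; the derivative of $t^p - y'$ at $\alpha$ is $p\alpha^{p-1} = py'/\alpha$, a unit in $\overline{A}[\frac{1}{p}][\alpha]$ because $y'$ (hence $\alpha$) is a unit, so $\overline{A}[\frac{1}{p}] \to \overline{A}[\frac{1}{p}][\alpha]$ is finite \'etale. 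By maximality $\alpha \in \overline{A}[\frac{1}{p}]$, and together with $\alpha \in A^+$ this gives $\alpha \in \overline{A}$ satisfying $\alpha^p = y' \equiv y \pmod{p\overline{A}}$. The main obstacle is producing such a $y'$. Natural candidates are $y' = y + p\pi^j$ for varying $j \ge 0$, each in $\overline{A}$ and congruent to $y$ modulo $p$. For any fixed maximal ideal $\mathfrak{M}$ of $\overline{A}[\frac{1}{p}]$, the values $\{-p\pi^j : j \ge 0\}$ are pairwise distinct in the characteristic-zero residue field at $\mathfrak{M}$ (otherwise some power $p^k = 1$, impossible), so at most one $j$ places $y + p\pi^j$ in $\mathfrak{M}$. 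The hard part is making a single choice of $j$ (or more generally $c \in \overline{A}$ with $y' = y + pc$) that avoids every $\mathfrak{M}$ simultaneously; this is where I would invoke the $p$-adic completeness or henselianity of $A$ to control the maximal spectrum of $\overline{A}[\frac{1}{p}]$ via that of $A[\frac{1}{p}]$, refining $c$ if needed through elements in a cyclotomic or totally ramified subextension of $\overline{A}$.
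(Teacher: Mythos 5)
The parts of your argument that you actually complete are fine and essentially coincide with the paper's: producing $\pi$ by adjoining a root of $t^p-p$, which is finite \'etale over $A[\frac{1}{p}]$ and hence lands in $\overline{A}$ by maximality; normality of $\overline{A}$ via normality of the ind-\'etale extension $\overline{A}[\frac{1}{p}]$; well-definedness of $\varphi$; and injectivity of $\varphi$ by the argument $x^p=\pi^p b \Rightarrow x/\pi$ integral over $\overline{A} \Rightarrow x\in\pi\overline{A}$, which is word for word the paper's step.

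The gap is at the central point, surjectivity of Frobenius on $\overline{A}/p\overline{A}$, which you only sketch. Your route requires, for each $y\in\overline{A}$, an element $y'\equiv y \pmod{p\overline{A}}$ that is a \emph{unit} of $\overline{A}[\frac{1}{p}]$, i.e.\ avoids every maximal ideal of $\overline{A}[\frac{1}{p}]$ simultaneously. The observation that for each fixed maximal ideal $\mathfrak{M}$ at most one candidate $y+p\pi^j$ lies in $\mathfrak{M}$ proves nothing, because $\overline{A}[\frac{1}{p}]$ has infinitely many maximal ideals as soon as $\dim A\ge 2$, and the closing sentence ("control the maximal spectrum \dots via that of $A[\frac{1}{p}]$, refining $c$ \dots") is a plan, not an argument; note also that maximal ideals of the integral, non-finite extension $\overline{A}[\frac{1}{p}]$ need not contract to maximal ideals of $A[\frac{1}{p}]$. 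In particular the hypotheses ($p$-adic completeness or henselian quasi-locality) are never actually brought to bear. The paper avoids the unit requirement altogether: for $b\in\overline{A}$ it takes the Artin--Schreier-type polynomial $f(X)=X^{p^2}-pX-b$, whose derivative is $p\,(pX^{p^2-1}-1)$. Since $\overline{A}$ is integral over $A$ and $p$ lies in the Jacobson radical of $A$ by the completeness/henselian hypothesis, $p$ lies in every maximal ideal of the finite $\overline{A}$-algebra $\overline{A}[X]/(f)$, so $pX^{p^2-1}-1$ is a unit there and $\overline{A}[\frac{1}{p}]\to \bigl(\overline{A}[X]/(f)\bigr)[\frac{1}{p}]$ is finite \'etale; a root $a\in A^+$ of $f$ then lies in $\overline{A}$ by maximality, and $(a^p)^p\equiv a^{p^2}-pa\equiv b \pmod{p\overline{A}}$, giving surjectivity for arbitrary $b$ with no unit condition. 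Replacing your "make $y'$ a unit and extract a $p$-th root" step by this polynomial (this is exactly where the hypothesis on $A$ enters) closes the gap; as it stands, your proposal does not prove the surjectivity statement.
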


\begin{proof}
Since $A$ is a normal domain and
$$
A[\frac{1}{p}] \to \overline{A}[\frac{1}{p}]
$$
is ind-\'etale, $\overline{A}[\frac{1}{p}]$ is normal. Hence $\overline{A}$ is a normal domain by maximality.

Note that $pA \ne A$ implies that $p$ is not a unit element of $A$. If $A$ is $p$-adically complete, then $p$ is contained in the Jacobson radical of $A$ and so in that of $\overline{A}$. Next, if $A$ is a henselian quasi-local domain, then $p$ is contained in the unique maximal ideal of $A$ and thus in the unique maximal ideal of $\overline{A}$. Pick an element $b \in \overline{A}$ and consider a polynomial
$$
f(X):=X^{p^2}-pX-b \in \overline{A}[X].
$$
Then $f'(X)=p^2X^{p^2-1}-p=p(pX^{p^2-1}-1)$ and the localization map:
$$
\overline{A}[\frac{1}{p}] \to \overline{A}[X]/(f(X))[\frac{1}{p}]
$$
is finite \'etale, because $p$ is contained in the Jacobson radical of $\overline{A}[X]/(f(X))$ and therefore, the image of $pX^{p^2-1}-1$ in $\overline{A}[X]/(f(X))$ is a unit element. There exist an element $a \in A^+$ such that $f(a)=0$ and a commutative diagram:
$$
\begin{CD}
\overline{A} @>>>  \overline{A}[X]/(f(X)) \\
@| @VVV \\
\overline{A} @>>>  \overline{A}[a] @>>> A^+ \\
\end{CD}
$$
where $\overline{A}[X]/(f(X)) \to \overline{A}[a]$ is defined by mapping $X$ to $a$. Then $\overline{A}[X]/(f(X))][\frac{1}{p}]$ is isomorphic to a finite product of normal domains in view of the normality of $\overline{A}$, and $\overline{A}[a][\frac{1}{p}]$ is isomorphic to one of the factors of $\overline{A}[X]/(f(X))][\frac{1}{p}]$. This shows that 
$$
A[\frac{1}{p}] \to  \overline{A}[a][\frac{1}{p}]
$$
is ind-\'etale. Since $A[\frac{1}{p}] \to \overline{A}[\frac{1}{p}]$ is the maximal \'etale extension contained in $A^+[\frac{1}{p}]$, it follows that $a \in \overline{A}$. Finally, we have 
$$
a^{p^2}-b \equiv a^{p^2}-pa-b \equiv 0 \pmod{p \overline{A}}
$$ 
and $(a^p)^p \equiv b \pmod{p \overline{A}}$. This proves that the Frobenius endomorphism is surjective on $\overline{A}/p\overline{A}$. 

Let $\pi \in A^+$ be a root of the equation $X^p-p=0$. Since $\overline{A} \to \overline{A}[\pi]$ is \'etale after inverting $p$, we have $\pi \in \overline{A}$. To deduce an isomorphism $\overline{A}/\pi \overline{A} \cong \overline{A}/p\overline{A}$, it suffices to show that the kernel of the Frobenius endomorphism:
$$
\Frob: \overline{A}/p\overline{A} \to \overline{A}/p\overline{A}
$$
is principally generated by $\pi$. Assume that $\overline{x}^p=0$ for $\overline{x} \in \overline{A}/p \overline{A}$ with its lift $x \in \overline{A}$. Then we can write $x^p=p \cdot b$ for some $b \in \overline{A}$, which implies that $x=\pi \cdot b'$ with $b' \in A^+$ and
$$
b' \in \overline{A}[\frac{1}{\pi}] \cap A^+.
$$
Since $\overline{A}$ is integrally closed in the field of fractions, we have $b' \in \overline{A}$ and $x \in \pi \overline{A}$. This finishes the proof of the lemma.
\end{proof}

Now we prove the following theorem.

\begin{theorem}
\label{theorem3}
Let $S$ be a complete local domain with mixed characteristic $p>0$. Then there exists an $S$-algebra $T$ such that the following hold:

\begin{enumerate}
\item[$\mathrm{(i)}$]
$T$ is a normal domain and $S \subset T \subset S^+$.

\item[$\mathrm{(ii)}$]
There is an element $\pi \in T$ such that $\pi^p=p$ and the Frobenius endomorphism is surjective on $T/pT$, which induces an isomorphism:
$$
T/\pi T \cong T/pT.
$$

\item[$\mathrm{(iii)}$]
There exist a complete discrete valuation ring $V$, a regular local sub-algebra
$$
R:=V[[t_2,\ldots,t_d]] \subset T
$$
together with an element $a \in R$, and a complete local normal domain $S'$ such that $R \subset S' \subset T$, where $R \to S'$ is module-finite, $S' \to T$ is integral, the height of the ideal $(p,a)$ of $R$ is 2, and the localization maps:
$$
R[\frac{1}{a}] \to S'[\frac{1}{a}]~\mbox{and}~S'[\frac{1}{p}] \to T[\frac{1}{p}]
$$
are ind-\'etale. In particular, 
$$
R[\frac{1}{pa}] \to T[\frac{1}{pa}]
$$
is ind-\'etale.
\end{enumerate}
\end{theorem}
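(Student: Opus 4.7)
The plan is to reduce to the setup of Gabber's theorem (Theorem \ref{gabber}) and then apply Lemma \ref{semiperfect}. To avoid notational collision, I will denote the uniformizer of the DVR produced by Gabber's theorem by $\varpi$ and reserve $\pi$ for the element of $T$ satisfying $\pi^p = p$ promised in the conclusion.

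First I would apply Theorem \ref{gabber} to $S$: this yields a torsion free module-finite extension $S \hookrightarrow S'$ with $S'$ a complete local normal domain (residue field $k' \supset k$), a totally ramified complete DVR extension $\mathbf{W}(k') \hookrightarrow V$ with uniformizer $\varpi$, and a module-finite extension $R := V[[t_2,\ldots,t_d]] \hookrightarrow S'$ that becomes \'etale after localizing at the height-one prime $\varpi R$. Since $S'$ is a finitely generated module over the Noetherian $\varpi$-adically complete ring $R$, it is itself $\varpi$-adically (equivalently $p$-adically, as $p = \varpi^f \cdot (\text{unit})$) complete; it is also $p$-torsion free, normal, and $pS' \ne S'$. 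As $S \hookrightarrow S'$ is integral, $(S')^+ = S^+$, so any subring between $S'$ and $(S')^+$ automatically sits between $S$ and $S^+$.

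Next I would set $T := \overline{S'}$ in the sense of Lemma \ref{semiperfect}, namely the unique $S'$-subalgebra of $(S')^+$ for which $S'[\tfrac{1}{p}] \to T[\tfrac{1}{p}]$ is the maximal \'etale extension inside $(S')^+[\tfrac{1}{p}]$. Items (i) and (ii) then follow directly from the lemma: $T$ is a normal domain with $S \subset T \subset S^+$, there exists $\pi \in T$ with $\pi^p = p$, the Frobenius is surjective on $T/pT$, and $T/\pi T \cong T/pT$.

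For (iii), the \'etale locus of the finite map $R \to S'$ is open in $\Spec(R)$ and, by Gabber's theorem, contains the generic point of $\varpi R$. Hence I can pick $a \in R \setminus \varpi R$ with $R[\tfrac{1}{a}] \to S'[\tfrac{1}{a}]$ finite \'etale; since $R$ is a regular UFD and $\varpi$ is prime, $\Ht(p,a) = \Ht(\varpi,a) = 2$. Integrality of $S' \hookrightarrow T$ is inherited from $T \subset (S')^+$, and ind-\'etaleness of $S'[\tfrac{1}{p}] \to T[\tfrac{1}{p}]$ is built into the construction. Localizing the \'etale map $R[\tfrac{1}{a}] \to S'[\tfrac{1}{a}]$ further at $p$, and the ind-\'etale map $S'[\tfrac{1}{p}] \to T[\tfrac{1}{p}]$ further at $a$, and composing, gives the final ind-\'etaleness $R[\tfrac{1}{pa}] \to T[\tfrac{1}{pa}]$. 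The main obstacle is the semiperfect enlargement itself, handled inside Lemma \ref{semiperfect} via the trick of adjoining roots of $X^{p^2} - pX - b$; this trick demands that $p$ lie in the Jacobson radical of the ambient ring, which is precisely why the $p$-adic completeness of $S'$ (guaranteed by Gabber) is essential. Everything else is assembly of the outputs of Gabber's theorem and Lemma \ref{semiperfect}.
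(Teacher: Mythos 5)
Your proof is correct and follows essentially the same route as the paper's: apply Gabber's theorem (Theorem \ref{gabber}) to produce the module-finite extension $R=V[[t_2,\ldots,t_d]]\to S'$ with $S'$ normal, then take $T$ to be the maximal \'etale extension of $S'[\tfrac{1}{p}]$ inside $(S')^+[\tfrac{1}{p}]$ and invoke Lemma \ref{semiperfect}. Your write-up is in fact a bit more careful than the paper's in spelling out why $S'$ meets the hypotheses of Lemma \ref{semiperfect} ($p$-adic completeness via finiteness over the complete $R$) and in tracking the height condition and the composed ind-\'etaleness at the end.
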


\begin{proof}
Since $S$ is a complete local domain by assumption, its module-finite extension normal domain satisfies the hypothesis of Lemma \ref{semiperfect}. By Theorem \ref{gabber}, there exists a module-finite extension $S \to S'$, a complete discrete valuation ring $V$ and a module-finite extension $R:=V[[t_2,\ldots,t_d]] \to S'$ such that $S'$ is normal and
$$
R[\frac{1}{a}] \to S'[\frac{1}{a}]
$$ 
is \'etale, where $a \in R$ satisfies the condition $\Ht(p,a)=2$. We define $T$ to be a normal domain such that $S' \subset T \subset S'^+$ and the localization map:
$$
S'[\frac{1}{p}] \to T[\frac{1}{p}]
$$
is the maximal \'etale extension in $S'^+[\frac{1}{p}]$. It follows that $T/pT$ is a semiperfect $\mathbb{F}_p$-algebra having all required properties in view of Lemma \ref{semiperfect}. This completes the proof of the theorem.
\end{proof}

\begin{remark}
\label{Fontainering}
Let us briefly recall the definition of \textit{Fontaine rings} (see \cite{Rob} and \cite{Shim1} for details). Let $A$ be a ring with $A/pA \ne 0$ for a fixed prime $p>0$. The Fontiane ring of $A$ is defined as the projective limit:
$$
\mathbf{E}(A):=\varprojlim_{n \in \mathbb{N}} A_n,
$$
where we put $A_n=A/pA$ and the transition map $A_{n+1} \to A_n$ is the $p$-th power map. Explicitly, an element of $\mathbf{E}(A)$ is written as $(a_0,a_1,\ldots,a_n,\ldots)$ such that $a_i \in A/pA$ and $a_{i+1}^p=a_i$. It is easy to see from the definition that the Fontaine ring is a perfect $\mathbb{F}_p$-algebra. In \cite{Shim1}, we considered the Fontaine ring in the case $A=R^+$ for a complete local domain $R$. In the articles \cite{An1}, \cite{An2}, Andr\'e proved that every complete local domain of mixed characteristic maps to an integral almost perfectoid almost Cohen-Macaulay algebra. For notation: The definition of an \textit{almost Cohen-Macaulay algebra} is the same as in \cite{Rob}. We say that a ring $A$ is \textit{integral almost perfectoid}, if it is $p$-adically complete, flat and the Frobenius endomorphism on $A/pA$ is almost surjective and its kernel is equal to $p^{\frac{1}{p}}A$. We ask the following question.

\begin{question}
Does every complete local domain of mixed characteristic possess an integral perfectoid big Cohen-Macaulay algebra?
Does every complete local domain of mixed characteristic admit sufficiently many big Cohen-Macaulay algebras?
\end{question}

It seems that the existence of such an algebra is useful in constructing a certain closure operation of ideals of Noetherian rings in mixed characteristic (an analogue of tight closure by Hochster and Huneke). By taking $A=T$ as constructed in Theorem \ref{theorem3} combined with the techniques developed in \cite{Shim1} and Andr\'e's results, we plan to study this question in \cite{Shim4}.
\end{remark}

\end{document}